\definecolor{darkergreen}{rgb}{0.0, 0.5, 0.0}
\numberwithin{equation}{section}
\def\theequation{\arabic{section}.\arabic{equation}}
\newcommand{\be}{\begin{eqnarray}}
	\newcommand{\ee}{\end{eqnarray}}
\newcommand{\ce}{\begin{eqnarray*}}
	\newcommand{\de}{\end{eqnarray*}}
\newtheorem{theorem}{Theorem}[section]
\newtheorem{lemma}[theorem]{Lemma}
\newtheorem{remark}[theorem]{Remark}
\newtheorem{definition}[theorem]{Definition}
\newtheorem{proposition}[theorem]{Proposition}
\newtheorem{Example}[theorem]{Example}
\newtheorem{corollary}[theorem]{Corollary}
\newtheorem{customassumption}{Assumption}
\newenvironment{nouppercase}{%
	\renewcommand{\uppercasenonmath}[1]{}}{}
\newtheorem{customthm}{Theorem}
\def\geq{\geqslant}
\def\leq{\leqslant}
\tikzset{
	dot/.style={circle,fill=black,inner sep=0pt, outer sep=0.7pt, minimum size=1mm},
	Phi/.style={white!40!red,thick,snake=coil,segment amplitude=0.6pt, segment length=2pt},
	Z/.style={black!40!green,thick,snake=coil,segment amplitude=0.6pt, segment length=2pt},
	C/.style={thick,black!20!blue},
	Cr/.style={thick,black!20!red},
	Cg/.style={thick,black!20!green},
}
\begin{document}
	
	\title[Stochastic SQG equation]{\LARGE Probabilistic Approaches to The Energy Equality in Forced Surface Quasi-Geostrophic Equations}

	\author[Lin Wang]{\large Lin Wang}
	\address[L. Wang]{School of Mathematical Sciences, University of Chinese Academy of Sciences, Beijing 100049, China; Academy of Mathematics and Systems Science, Chinese Academy of Sciences, Beijing 100190, China.}
	\email{wanglin2021@amss.ac.cn}
	
\author[Zhengyan Wu]{\large Zhengyan Wu}
\address[Z. Wu]{Department of Mathematics, Technische Universit\"at M\"unchen, Boltzmannstr. 3, 85748 Garching, Germany}
\email{wuzh@cit.tum.de}

\begin{abstract}
We explore probabilistic approaches to the deterministic energy equality for the forced Surface Quasi-Geostrophic (SQG) equation on a torus. First, we prove the zero-noise dynamical large deviations for a corresponding stochastic SQG equation, where the lower bound matches the upper bound on a certain closure of the weak-strong uniqueness class for the deterministic forced SQG equation. Furthermore, we show that the energy equality for the deterministic SQG equation holds on arbitrary time-reversible subsets of the domain where we match the upper bound and the lower bound. Conversely, the violation of the deterministic energy equality breaks the lower bound of large deviations. These results extend the existing techniques in Gess, Heydecker, and the second author \cite{GHW24_LLNS} to generalized Sobolev spaces with negative indices. Finally, we provide an analysis of the restricted quasi-potential and prove a conditional equivalence compared to the rate function of large deviations for the Gaussian distribution. This suggests a potential connection between non-Gaussian large deviations in equilibrium for the stochastic SQG equation and the open problem regarding the uniqueness of the deterministic SQG equation.	\end{abstract}

	\subjclass[2010]{60H15; 60F10; 35Q35}
	\keywords{Surface Quasi-Geostrophic equation, large deviations, energy equality, weak-strong uniqueness class, non-Gaussian equilibrium.}
	
	\date{\today}
	
	\begin{nouppercase}
		\maketitle
	\end{nouppercase}
	
	\setcounter{tocdepth}{1}

	\section{Introduction}\label{sec-intro}

This paper is dedicated to exploring probabilistic approaches to the deterministic energy equality for a forced dissipative Surface Quasi-Geostrophic (SQG) equation on the torus $\mathbb{T}^2=\mathbb{R}^2/\mathbb{Z}^2$: 
\begin{equation}\label{PDE-SQG}
	\begin{aligned}
		\partial_t\theta=&-\Lambda^{2\alpha}\theta-u_\theta\cdot\nabla\theta+\Lambda^{2\beta}g,\\
		u_\theta=&R^{\perp}\theta=(-R_2\theta,R_1\theta),
	\end{aligned}
\end{equation}
where $\Lambda=(-\Delta)^{1/2}, R_j$ $(j=1,2)$ is the $j$-th Riesz transform and $g \in L^2([0,T];L^2(\mathbb{T}^2))$. The parameters $\alpha$ and $\beta$ will be discussed within the following  ranges:  
\begin{equation}\label{pama-range}
	\alpha\in(0,1/2),\, \beta=\alpha/2, \ \ \text{and}\ \  \alpha\in [1/2,1),\, \beta=\alpha/2 \text{ or } \alpha/2+1/4.
\end{equation}

A key element of these probabilistic approaches is the interpretation of \eqref{PDE-SQG} as the so-called skeleton equation of the following stochastic SQG equation:  
\begin{align}\label{SPDE-SQG}
	\partial_t\theta_{\varepsilon}=&-\Lambda^{2\alpha}\theta_{\varepsilon}-u_{\theta_\varepsilon}\cdot\nabla\theta_{\varepsilon}+\varepsilon^{1/2}\Lambda^{2\beta}\xi_{\delta(\varepsilon)},\\
	u_{\theta_\varepsilon}=&R^{\perp}\theta_{\varepsilon}=(-R_2\theta_{\varepsilon},R_1\theta_{\varepsilon}),\notag
\end{align}
within the framework of large deviations theory. Here, $\xi_{\delta(\varepsilon)}$ is a spatial regularization of space-time white noise $\xi$. We will explore the connections between dynamical large deviations of \eqref{SPDE-SQG}  as $ (\varepsilon,\delta(\varepsilon))\rightarrow(0,0) $ and the validity of the deterministic energy equality in \eqref{PDE-SQG}. Additionally, we will highlight the potential relationships between large deviations in equilibrium for \eqref{SPDE-SQG} and the uniqueness problem for \eqref{PDE-SQG}. Furthermore, when $ \varepsilon=0 $  in \eqref{SPDE-SQG}, a scaling argument suggests that the fractional dissipation index $\alpha=1/2$ belongs to the critical regime,
$\alpha>1/2$ falls within the subcritical regime, and  $\alpha\in (0,1/2) $ stands for the supercritical regime \cite{Res95}.	Hence the ranges in \eqref{pama-range} cover subcritical, critical, and supercritical cases.

Regarding the deterministic PDE \eqref{PDE-SQG}, the initial data is considered to be an $L^2(\mathbb{T}^2)$-function with zero mean, and the Leray-Hopf solution theory (see \cite[Theorem 3.1]{Res95}, analogous to the Navier-Stokes equations) is employed. One of the key features of the Leray solutions is the so-called energy inequality. However, it is unknown whether the equality case of the energy inequality
\begin{equation}\label{energy-equality}
	\frac{1}{2}\|\theta(T)\|_{L^2(\mathbb{T}^2)}^2+\int_0^T\|\Lambda^{\alpha}\theta\|_{L^2(\mathbb{T}^2)}^2\,\mathrm{d}s= \frac{1}{2}\|\theta(0)\|_{L^2(\mathbb{T}^2)}^2+\int_0^T \langle\Lambda^{2\beta}\theta,g\rangle \,\mathrm{d}s
\end{equation}
holds for a Leray solution of \eqref{PDE-SQG}. In the system of the SQG equation, the field $ \theta $ represents the temperature or surface buoyancy for a rapidly rotating stratified fluid with uniform potential vorticity, and $u_{\theta}=R^{\perp}\theta$  denotes the transport velocity field \cite{HPGS95}. From a mathematical perspective, the energy equality (1.4) is expected to hold formally because $u_\theta$ is divergence-free. For sufficiently regular $\theta$, taking the $L^2$-inner product of both sides of equation \eqref{PDE-SQG} with $\theta$ then yields \eqref{energy-equality}. Moreover, since the Riesz transform is an isometry on $L^2$ \cite[Chapter VI, Section 2]{SW71}, the $L^2$-norm of $\theta$ equals that of $u_\theta$ and thus corresponds to the kinetic energy
\begin{equation*}	\mathcal{K}(t)=\frac{1}{2}\int_{\mathbb{T}^2} |u_{\theta}(t)|^2\,\mathrm{d}x = \frac{1}{2}\int_{\mathbb{T}^2} |\theta(t)|^2\,\mathrm{d}x,\, t\in[0,T].
\end{equation*}Analogous to the celebrated result of Lions-Ladyzhenskaya for three-dimensional Navier-Stokes equations, it is possible to verify the energy equality for the SQG equation under some regularity conditions, see for example \cite{Dai17_energy}. On the other hand, the uniqueness problem of \eqref{PDE-SQG} remains open as well, with various regularity conditions specified to show uniqueness. We refer readers to \cite[Section 3.4]{Res95}, \cite{DC06_weak_strong}, and the references therein for more details. To the best of our knowledge, in the study of SQG equation, there are no direct relationships between the regularity conditions for the energy equality and those for uniqueness. This is one of the motivations for this work, and we point out that a direct relationship will be provided in this context based on the perspective of probability. More precisely, this is linked to the study of dynamical large deviations of \eqref{SPDE-SQG}. Furthermore, this unveils a sufficient condition for proving the energy equality \eqref{energy-equality}, although it is unclear how to verify such a condition.

The stochastic PDE \eqref{SPDE-SQG} (proposed by Totz \cite{Tot20} and Hofmanov\'a et al. \cite{HLZZ24}) is an SQG equation driven by additive noise with small intensity $\varepsilon$ and small correlation $\delta(\varepsilon)$. The consideration of such noise is inspired by the fluctuation-dissipation relation at a formal level. Generally speaking, the SQG equation can be regarded as a toy model for understanding the regularity of the Navier-Stokes equations. We point out that an important fluctuating Navier-Stokes model known as the Landau-Lifshitz-Navier-Stokes (or Navier-Stokes-Fourier system for the full equations) is governed by the fluctuation-dissipation principle, a fundamental concept in the theory of fluctuating hydrodynamics (see \cite{O05}). The stochastic SQG  equation \eqref{SPDE-SQG} could be considered as a stochastic toy model for the Landau-Lifshitz-Navier-Stokes equations. In particular, the choice of $\beta=\frac{\alpha}{2}$, corresponding to the fractional dissipation $-\Lambda^{2\alpha}$, exhibits a structural resemblance to the fluctuation-dissipation principle. Regarding \eqref{SPDE-SQG}, a dynamical large deviation principle is proved, in a joint scaling regime $(\varepsilon,\delta(\varepsilon))\rightarrow(0,0)$, with initial data allowing for fluctuations as well. Moreover, we provide an analysis on the identification of the quasi-potential 
\begin{equation}\label{Quasi-potential}
	\mathcal{U}(\phi)=\frac{1}{2}\inf\bigg\{\int^0_{-\infty}\big\|\partial_t\tilde\theta+\Lambda^{2\alpha}\tilde\theta+R^{\perp}\tilde\theta\cdot\nabla \tilde\theta\big\|_{H^{-2\beta}(\mathbb{T}^2)}^2\,\mathrm{d}s:\ -\tilde\theta(-\cdot)\in\mathcal{A}_0,\ \tilde\theta(0)=\phi\bigg\},	
\end{equation}
where $ \phi \in H^{\alpha-2\beta}(\mathbb{T}^2)$ and $\mathcal{A}_0$ is a restricted domain related to the dynamical large deviations result. We will specify $\mathcal{A}_0$ later on.

Here, we emphasize that in the study of large deviations for \eqref{SPDE-SQG}, the initial data could also be considered in $\dot H^{-1/2}(\mathbb{T}^2)$. For the inviscid SQG equation, the functional
\begin{equation*}
	\mathcal{H}(t):=\frac{1}{2}\|\theta(t)\|_{\dot H^{-1 / 2}\left(\mathbb{T}^2\right)}^2=\frac{1}{2}\int_{\mathbb{T}^2}\Lambda^{-1}\theta(t) \cdot\theta(t)\,\mathrm{d}x,\ \ t\in[0,T]
\end{equation*}
can be taken as the Hamiltonian of the system. This is derived from the Euler-Poincar\'e variational principle. We refer readers to Arnold's work \cite{Arn66}, which is an infinite-dimensional generalization of \cite{Poi01}. (See \cite[Section 2.2]{Res95}, \cite[Section 1.4, Appendix A.1]{BSV19_Nonuniqueness_SQG} for more details.)  In such a lower regularity regime, a commutator estimate associated with the  Riesz transform is employed to make the nonlinear term well-defined. This distinguishes the technique used in proving large deviations for SQG equations from that for three-dimensional Landau-Lifshitz-Navier-Stokes equations established by Gess, Heydecker, and the second author \cite{GHW24_LLNS}. Further comments regarding the technique for such an $\dot{H}^{-1/2}(\mathbb{T}^2)$ framework will be provided later on.

A theoretical basis for considering the fluctuating hydrodynamics equation is that the Gibbs measure preserves the dynamics invariant informally \cite{Spo91}.  As long as we replace the correlated noise $\xi_{\delta(\varepsilon)}$ with a space-time white noise $\xi$, and choose $\beta=\frac{\alpha}{2}$, we are able to see that the $L^2(\mathbb{T}^2)$-cylindrical Gaussian measure $\mathcal{G}(0,\varepsilon I/2)$ is informally invariant for the dynamics, where $I$ is the identity operator on $L^2(\mathbb{T}^2)$, and the dynamics are time-reversible with respect to this measure. A rigorous mathematical approach can be found within the energy solution framework \cite{Tot20}, where the existence of stationary energy solutions with Gaussian distribution is demonstrated. This raises an important question: when regularizing space-time white noise and studying fluctuations in a more regular space under the scaling regimes $(\varepsilon,\delta(\varepsilon))$, can the Gaussian equilibrium feature be preserved in terms of the rate function for large deviations?

We highlight the work \cite{BC17} and \cite{CP22}, where the authors established a large deviation principle for the invariant measure of the two-dimensional Navier-Stokes equation and showed that the rate function is governed by its quasi-potential, which coincides with the rate function for Gaussian large deviations. Inspired by their work, one can investigate large deviations in equilibrium of regularized stochastic PDEs \eqref{SPDE-SQG}. Whether an asymptotic Gaussian or non-Gaussian equilibrium feature arises depends on whether the corresponding quasi-potential coincides with the Gaussian rate function. This motivates us to investigate the identification of the quasi-potential \eqref{Quasi-potential} for the SQG equation.

Roughly speaking, the main results in this paper provide the relationships described by the following picture,
\\
\begin{figure*}[h]
	\centering
	\includegraphics[width=1.0\textwidth]{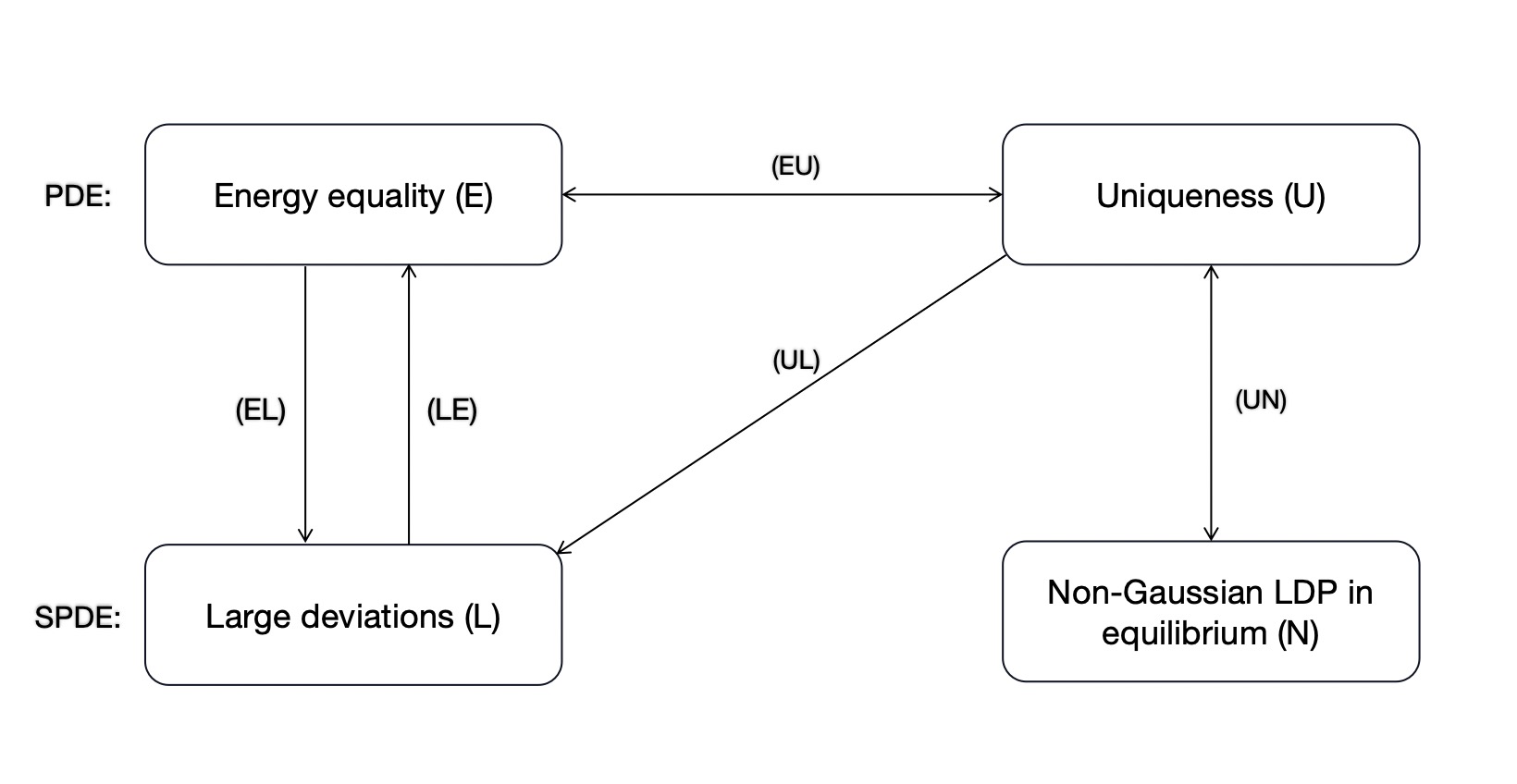}
\end{figure*}

with 

(UL): (Theorem \ref{Main-Theorem-1}) Weak-strong uniqueness implies a restricted dynamical large deviation principle;

(LE): (Theorem \ref{Main-Theorem-2}, probabilistic approach) The energy equality holds on any time-reversible domain where the lower bound of large deviations matches the upper bound;

(EL): (Corollary \ref{Main-Theorem-corollary}) Violation of the kinetic energy equality breaks the lower bound of large deviations;

(EU): (Theorem \ref{Main-Theorem-2}, Theorem \ref{theorem-generalized-energy-equality}, analytic approach)  The energy equality holds on a certain time-reversible closure of the weak-strong uniqueness regularity class; 

(UN): (Theorem \ref{Main-Theorem-3}) The conditional equivalence of the quasi-potential and the Gaussian rate function is proved.

We explain why (UN) provides a potential connection between non-Gaussian large deviations in equilibrium and the open problems of uniqueness for the deterministic SQG equation. In general, the proof of a large deviation principle for invariant measures of stochastic PDEs relies on uniform dynamical large deviations, and the rate function is governed by the quasi-potential, which is defined as the infimum of the dynamical cost on the time interval $(-\infty,0]$ along paths in the space where the dynamical large deviations are proved. However, in this paper, the dynamical large deviation lower bound is shown to match the upper bound only when we restrict fluctuations to a closure of the so-called weak-strong uniqueness class $\mathcal{C}_0$ (see Definition \ref{def-weak-strong-uniqueness-intro} later on). This suggests that the quasi-potential should be defined in a restricted version, see \eqref{Quasi-potential},  where the infimum is taken along paths corresponding to a subset $\mathcal{A}_0\subset\mathcal{C}_0$. To show that the quasi-potential \eqref{Quasi-potential} equals the Gaussian rate function $I_{Gauss}(\phi)=\|\phi\|_{H^{\alpha-2\beta}(\mathbb{T}^2)}^2$ for all $ \phi\in H^{\alpha-2\beta}(\mathbb{T}^2)$, a similar argument as in \cite{BCF15} can be carried out. However, this relies on the existence of solutions for the deterministic SQG equation in $\mathcal{A}_0$. This links to the open problem of the uniqueness for the SQG equation.     

\subsection{Main results}
Let $\alpha$, $\beta$ satisfy \eqref{pama-range}, and $T>0$ be a fixed time horizon in the whole context. We first state the result of dynamical large deviations for \eqref{SPDE-SQG}. Let 

\begin{equation}
	\mathbb{X}_{\alpha,\beta}:= L^2([0,T];H^{\alpha-2\beta}(\mathbb{T}^2)) \cap	L^2_w([0,T];H^{2\alpha-2\beta}(\mathbb{T}^2)) \cap C([0,T];H^{\alpha-2\beta}_w(\mathbb{T}^2)),
\end{equation}
where ``$w$" denotes the weak topology. Let $s>\alpha+1$, we regularize the noise by 
\begin{equation*}
	\xi_{\delta(\varepsilon)}=\sqrt{Q_{\delta(\varepsilon)}}\xi=(I+\delta(\varepsilon)\Lambda^{2s})^{-1/2}\xi.
\end{equation*}
A detailed calculation concerning the blow-up speed of $\xi_{\delta(\varepsilon)}$ will be given in Section \ref{section-Preliminary}. Based on this, we restrict that the scaling regime $(\varepsilon,\delta(\varepsilon))$ satisfies 
\begin{equation}\label{scaling-delta}
	\lim_{\varepsilon\rightarrow0}\varepsilon\delta(\varepsilon)^{-\frac{\alpha+1}{s}}=0
\end{equation}in the study of large deviations.
For every $\theta\in\mathbb{X}_{\alpha,\beta}$, let 
\begin{align*}
	\mathcal{I}_{dyna}(\theta)=\frac{1}{2}\big\|\partial_t\theta+\Lambda^{2\alpha}\theta+R^{\perp}\theta\cdot\nabla\theta\big\| ^2_{L^2\left( [0,T];H^{-2\beta}(\mathbb{T}^2)\right) },\ \ \ \mathcal{I}_{0}(\theta(0))=
	\|\theta(0)\|_{H^{\alpha-2\beta}(\mathbb{T}^2)}^2
\end{align*}
be large deviation costs for the dynamics and the initial data, respectively. Moreover, we set 
\begin{equation}\label{rate-function-intro}
	\mathcal{I}(\theta):=\mathcal{I}_{dyna}(\theta)+\mathcal{I}_0(\theta(0))	
\end{equation}
to be the whole rate function. The following \textit{weak-strong uniqueness} class and its closure are introduced as the spaces where we restrict the lower bound of large deviations. 

\begin{definition}\label{def-weak-strong-uniqueness-intro} 
	We say that
	$\mathcal{C}_0 \subset \mathbb{X}_{\alpha,\beta}$ is a weak-strong uniqueness class of \eqref{PDE-SQG} if for every control $g \in L^2([0,T];L^2(\mathbb{T}^2))$, the following holds:   for arbitrary two weak solutions (see Definition \ref{def-weak-solution-skeleton-equation} later on)  $\theta_1, \theta_2 $  of the skeleton equation \eqref{PDE-SQG} with the same initial data $\theta_1(0)=\theta_2(0)$ and the same control $ g $, we have $\theta_1=\theta_2$ in $ \mathbb{X}_{\alpha,\beta}$ as long as $\theta_1\in\mathcal{C}_0$ and  $\theta_2$ satisfies the $ H^{\alpha-2\beta}$-energy inequality: for every $ t\in[0,T], $
	$$
	\frac{1}{2}\|\theta_2(t)\|_{H^{\alpha-2\beta}(\mathbb{T}^2)}^2+\int_0^t\|\theta_2(s)\|_{H^{2\alpha-2\beta}(\mathbb{T}^2)}^2 \,\mathrm{d}s \leq \frac{1}{2}\left\|\theta_2(0)\right\|_{H^{\alpha-2\beta}(\mathbb{T}^2)}^2+\int_0^t\langle\Lambda^{2\alpha-2\beta} \theta_2, g\rangle \,\mathrm{d}s.
	$$
	Furthermore, the $\mathcal{I}$-closure of $\mathcal{C}_0$ is defined by
	\begin{equation*}
			\overline{\mathcal{C}_0}^{\mathcal{I}}:=\Big\{\theta \in \mathbb{X}_{\alpha,\beta}: \text{there exists a sequence }\{ \theta^{(n)} \} \subset \mathcal{C}_0, \, \text{such that } \theta^{(n)} \rightarrow \theta \text{ in } \mathbb{X}_{\alpha,\beta}, \, \mathcal{I}\big(\theta^{(n)}\big) \rightarrow \mathcal{I}(\theta)\Big\}.
	\end{equation*}
	
\end{definition}

In particular, examples of regularity classes  $\mathcal{C}_0$ and $\overline{\mathcal{C}_0}^{\mathcal{I}}$ are provided in Appendix  \ref{section-example}. Furthermore, we provide the following assumptions of the initial data for \eqref{SPDE-SQG}. 
\begin{customassumption}[Gaussian initial data] \label{Gaussian-initial-data}
	Assume that the law of the initial data $ \theta_{\varepsilon, \delta(\varepsilon)}(0) $ is $\mathcal{G}\left(0, \varepsilon Q_{\delta(\varepsilon)} / 2\right)$, the Gaussian measure on $ \dot{H}^{\alpha-2\beta}(\mathbb{T}^2)$ with mean zero and covariance $\varepsilon Q_{\delta(\varepsilon)}/2.$
\end{customassumption}

Now we provide the definition of solutions. The following definition can be regarded as a generalization of the Leray solution. 

\begin{definition}\label{def-stochastic-leray}   We say that $(\theta_{\varepsilon,\delta(\varepsilon)},W)$,  $\left(\Omega, \mathcal{F},\{\mathcal{F}_t\}_{t \in[0, T]}, \mathbb{P}\right)$ is a stochastic generalized Leray solution of \eqref{SPDE-SQG} with initial data $\theta_0$ if, $ \theta_0 $ is an $ H^{\alpha-2\beta}(\mathbb{T}^2) $-valued random element, $ W $  is a cylindrical Wiener process on $ H^{\alpha-2\beta}(\mathbb{T}^2)$ independent of $\theta_0$, and
	$\theta_{\varepsilon,\delta(\varepsilon)}$ is a progressively measurable process satisfying:
	
	(i)   $\mathbb{P}$-almost surely, $\theta_{\varepsilon,\delta(\varepsilon)}\in L^\infty([0,T];H^{\alpha-2\beta}(\mathbb{T}^2)) \cap L^2([0,T];H^{2\alpha-2\beta}(\mathbb{T}^2))\cap C([0,T];H_w^{\alpha-2\beta}(\mathbb{T}^2))$.
	
	(ii) $\mathbb{P}$-almost surely, for every $\varphi \in C^{\infty}\left([0, T];H^\infty(\mathbb{T}^2)\right)$,  for all $t\in[0,T]$, 
	\begin{equation*}
		\begin{aligned}
			&\left\langle\theta_{\varepsilon,\delta(\varepsilon)}(t), \varphi(t)\right\rangle+\int_0^t\left\langle\theta_{\varepsilon,\delta(\varepsilon)}(s), \Lambda^{2\alpha} \varphi(s)\right\rangle \,\mathrm{d}s=\left\langle \theta_0, \varphi(0)\right\rangle+\int_0^t\langle\theta_{\varepsilon,\delta(\varepsilon)}(s),\partial_s\varphi(s) \rangle\,\mathrm{d}s\\&-\frac{1}{2}\int_0^t\left\langle R_2 \theta _{\varepsilon,\delta(\varepsilon)} (s) ,\left[\Lambda, \partial_1 \varphi(s)\right]\Lambda^{-1} \theta_{\varepsilon,\delta(\varepsilon)}(s)\right\rangle \,\mathrm{d} s\\&+\frac{1}{2}\int_0^t\left\langle R_1 \theta _{\varepsilon,\delta(\varepsilon)} (s) ,\left[\Lambda, \partial_2 \varphi(s)\right]\Lambda^{-1} \theta_{\varepsilon,\delta(\varepsilon)}(s)\right\rangle \,\mathrm{d} s+\sqrt{\varepsilon}\int_{0}^t \langle \Lambda^{2\beta}\varphi(s),\sqrt{Q_\delta(\varepsilon)}\,\mathrm{d}W(s)\rangle.
		\end{aligned}
	\end{equation*}
	Here and subsequently, the nonlinear terms $ \left\langle R_j \theta _{\varepsilon,\delta(\varepsilon)} (s) ,\left[\Lambda, \partial_i \varphi(s)\right]\Lambda^{-1} \theta_{\varepsilon,\delta(\varepsilon)}(s)\right\rangle (i,j=1,2)$ are defined by the $ H^{-1/2}-H^{1/2} $ dual, see Section \ref{section-Preliminary} for details.
	
	(iii)  $ \theta_{\varepsilon,\delta(\varepsilon)} $ satisfies the pathwise $ H^{\alpha-2\beta}$-energy inequality: $\mathbb{P}$-almost surely, for all $t\in[0,T]$, 
	\begin{equation}\label{pathwise energy inequality}
		\begin{aligned}
			\frac{1}{2}\left\| \theta_{\varepsilon,\delta(\varepsilon)}(t)\right\|_{H^{\alpha-2\beta}(\mathbb{T}^2)}^2 +\int_0^t\left\| \theta_{\varepsilon,\delta(\varepsilon)}(s)\right\|_{H^{2\alpha-2\beta}(\mathbb{T}^2)}^2 \,\mathrm{d}s&\leq  \frac{1}{2}\left\|\theta_0\right\|_{H^{\alpha-2\beta}(\mathbb{T}^2)}^2  \\
			+\sqrt{\varepsilon} \int_0^t\langle \Lambda^{2\alpha-2\beta} \theta_{\varepsilon,\delta(\varepsilon)}(s),\sqrt{Q_{\delta(\varepsilon)}}\,\mathrm{d}W(s)\rangle&+\frac{\varepsilon}{2}\left\|\Lambda^{\alpha} \circ \sqrt{Q_{\delta(\varepsilon)}}\right\|_{HS}^2 t.
		\end{aligned}
	\end{equation}
\end{definition}
Based on the above preparation, the large deviations result is stated below. 
\begin{theorem}[Proposition \ref{proposition-upperbound}, Proposition \ref{prop-restricted-lower-bound}]\label{Main-Theorem-1}
	
	For every $\varepsilon,\delta(\varepsilon)>0$, let $\theta_{\varepsilon,\delta(\varepsilon)}$ be a stochastic generalized Leray solution of \eqref{SPDE-SQG} in the sense of Definition \ref{def-stochastic-leray} with initial data $ \theta_{\varepsilon, \delta(\varepsilon)}(0)$ satisfying Assumption \ref{Gaussian-initial-data}. Let $\mu_{\varepsilon}=\mu_{\varepsilon,\delta(\varepsilon)}$ be the laws of $\theta_{\varepsilon,\delta(\varepsilon)}$ on $\mathbb{X}_{\alpha,\beta}$. Let $ \mathcal{C}_0\subset\mathbb{X}_{\alpha,\beta}$ be a weak-strong uniqueness class of \eqref{PDE-SQG} and   $\mathcal{C}=\overline{\mathcal{C}_0}^{\mathcal{I}}$. Assume that the scaling regime \eqref{scaling-delta} holds for $(\varepsilon,\delta(\varepsilon))$. Then for any closed set $F\subset\mathbb{X}_{\alpha,\beta}$,
	\begin{equation*}
		\limsup_{\varepsilon\rightarrow0}\varepsilon\log\mu_{\varepsilon}(F)\leq-\inf_{\theta\in F}\mathcal{I}(\theta),  	
	\end{equation*}
	and for any open set $G\subset\mathbb{X}_{\alpha,\beta}$, 	\begin{equation*}
		\liminf_{\varepsilon\rightarrow0}\varepsilon\log\mu_{\varepsilon}(G)\geq-\inf_{\theta\in G\cap \mathcal{C}}\mathcal{I}(\theta). 	
	\end{equation*}
\end{theorem}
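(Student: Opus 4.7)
My plan is to prove the upper and lower bounds separately via the Budhiraja--Dupuis weak-convergence framework, following the strategy developed for the Landau--Lifshitz--Navier--Stokes system in \cite{GHW24_LLNS}, but adapted to the SQG regularity. Since the initial datum and the driving noise are independent under Assumption \ref{Gaussian-initial-data}, the rate function decomposes additively, and one can treat $\mathcal{I}_0$ via a Schilder-type result for the Gaussian initial data alone.

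For the upper bound, I would first establish exponential tightness of $\{\mu_\varepsilon\}$ in $\mathbb{X}_{\alpha,\beta}$. This requires uniform exponential moment estimates on the stochastic Leray solutions in all three topologies defining $\mathbb{X}_{\alpha,\beta}$: strong $L^2_tH^{\alpha-2\beta}_x$, weak $L^2_tH^{2\alpha-2\beta}_x$, and $C_tH^{\alpha-2\beta}_w$. The estimates follow from It\^o's formula applied to $\|\theta_{\varepsilon,\delta}\|_{H^{\alpha-2\beta}}^2$ together with exponential martingale and stochastic Gronwall arguments, using the scaling \eqref{scaling-delta} to absorb the It\^o correction produced by the colored noise. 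Identification of the rate function then proceeds by a contraction-type argument: decompose $\theta_{\varepsilon,\delta}=\sqrt{\varepsilon}\,z_{\varepsilon,\delta}+v_{\varepsilon,\delta}$, where $z$ solves the linear stochastic convolution equation and obeys a Gaussian LDP with rate $\tfrac{1}{2}\|\cdot\|_{L^2_t H^{-2\beta}_x}^2$, while $v$ solves a random perturbation of \eqref{PDE-SQG} driven by $z$. Lower semicontinuity of $\mathcal{I}_{dyna}$ on $\mathbb{X}_{\alpha,\beta}$, which in turn reduces to weak continuity of the SQG nonlinearity $R^\perp\theta\cdot\nabla\theta$ on this space via a Riesz commutator estimate, then yields the upper bound.

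For the restricted lower bound, I would first use the definition of $\overline{\mathcal{C}_0}^{\mathcal{I}}$ and the continuity of $\mathcal{I}$ along recovery sequences to reduce to the case $\theta\in\mathcal{C}_0\cap G$ with $\mathcal{I}(\theta)<\infty$. For such a $\theta$, the defect $h:=\partial_t\theta+\Lambda^{2\alpha}\theta+R^\perp\theta\cdot\nabla\theta$ lies in $L^2_tH^{-2\beta}_x$ with $\|h\|^2_{L^2H^{-2\beta}}=2\mathcal{I}_{dyna}(\theta)$, so that $\theta$ is a weak solution of \eqref{PDE-SQG} with control $g:=\Lambda^{-2\beta}h$. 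A Cameron--Martin translation of the noise by $\varepsilon^{-1/2}h$ together with a translation of the Gaussian initial datum by $\theta(0)$ transforms the problem into estimating the probability that the translated stochastic trajectory stays in a small $\mathbb{X}_{\alpha,\beta}$-neighborhood of $\theta$, at the exponential cost $\mathcal{I}(\theta)$. The near-concentration of the translated dynamics on $\theta$ is precisely what weak-strong uniqueness buys us: any weak subsequential limit satisfies the skeleton equation with control $g$ and the $H^{\alpha-2\beta}$-energy inequality of Definition \ref{def-weak-strong-uniqueness-intro}, so by membership of $\theta$ in $\mathcal{C}_0$ the two must coincide.

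The main obstacle is exactly this identification step at the lower-bound level: without a deterministic uniqueness theorem for \eqref{PDE-SQG} at the regularity of $\mathbb{X}_{\alpha,\beta}$, which is open, one cannot expect the translated stochastic process to concentrate on an arbitrary target, and the weak-strong uniqueness assumption is the minimal structural substitute that permits the comparison. A parallel technical difficulty is to carry out the Girsanov translation while keeping the singular nonlinearity $R^\perp\theta\cdot\nabla z$ meaningful as $(\varepsilon,\delta(\varepsilon))\to(0,0)$; here the scaling \eqref{scaling-delta} must calibrate the blow-up of $\xi_\delta$ against the regularity deficit of $R^\perp\theta$, and the Riesz commutator estimate is what actually makes the limiting product well-posed in the low-regularity $\dot H^{-1/2}$ setting emphasized in the introduction.
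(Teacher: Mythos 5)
Your lower-bound strategy is essentially the paper's: a Girsanov tilt of the noise by $\varepsilon^{-1/2}g$ (with $g=\Lambda^{-2\beta}h$) together with a translation of the Gaussian initial law, reduction from $\mathcal{C}=\overline{\mathcal{C}_0}^{\mathcal{I}}$ to $\mathcal{C}_0$ along recovery sequences, and identification of the subsequential limits of the tilted dynamics with $\theta$ via weak-strong uniqueness and the $H^{\alpha-2\beta}$-energy inequality. The paper packages the exponential cost through the entropy lemma (Lemma \ref{lemma-entropy-method}), i.e.\ it bounds $\varepsilon\,\mathrm{Ent}(\pi_{\varepsilon,m,\theta}\mid\mu_{\varepsilon,m})$ rather than estimating the probability of a small neighborhood directly, but this is the same mechanism and your identification step is the right one.

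The upper bound, however, has a genuine gap. You open by invoking the Budhiraja--Dupuis weak-convergence framework and then propose a contraction-type argument through the splitting $\theta_{\varepsilon,\delta}=\sqrt{\varepsilon}\,z_{\varepsilon,\delta}+v_{\varepsilon,\delta}$. Both devices are unavailable here, for the reason the paper states explicitly: the stochastic generalized Leray solutions are only probabilistically weak, hence not measurable functionals of the Brownian path, so the variational representation underlying the weak-convergence approach does not apply; and the map $z\mapsto v$ defined by the perturbed skeleton equation is not known to be single-valued, let alone continuous, on $\mathbb{X}_{\alpha,\beta}$ in the critical and supercritical range, so the contraction principle cannot transfer a Gaussian LDP for $z$ to $\theta_{\varepsilon,\delta}$. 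Lower semicontinuity of $\mathcal{I}_{dyna}$ (which you correctly reduce to continuity of the nonlinearity via the Riesz commutator estimate) is necessary but does not by itself produce the probabilistic estimate. What the paper actually does is: (i) exponential tightness (your first step, which is fine), reducing to compact $F$; (ii) a variational characterization $\mathcal{I}(\theta)=\sup_{\psi,\varphi}\{\Lambda_0(\psi,\theta(0))+\Lambda_1^T(\varphi,\theta)\}$; (iii) the observation that $Q^{\psi,\varphi}(t,\theta_{\varepsilon,\delta})=\exp\{\varepsilon^{-1}(\mathcal{M}^{\psi,\varphi}-\tfrac12\int_0^t\|\varphi\|^2_{H^{2\beta}(\mathbb{T}^2)}\,\mathrm{d}s)\}$ is a supermartingale for any martingale solution, which gives $\mu_\varepsilon(F)\leq\exp\{-\varepsilon^{-1}\inf_{\theta\in F}(\mathcal{M}^{\psi,\varphi}(T,\theta)-\tfrac12\int_0^T\|\varphi\|^2_{H^{2\beta}(\mathbb{T}^2)}\,\mathrm{d}s)\}\int Q^{\psi,\varphi}(0,\theta)\,\mu_\varepsilon(\mathrm{d}\theta)$ together with Varadhan's lemma for the initial-data factor; and (iv) the minimax lemma on compacts to swap $\sup_{\psi,\varphi}$ and $\inf_{\theta\in F}$. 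This route needs only the weak formulation of the equation and entirely avoids any solution map. You should replace your contraction argument with this exponential-supermartingale and minimax scheme, or else supply a continuity proof for the decomposition map, which would amount to a well-posedness result that is currently open.
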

We further comment that the above large deviations result holds for a sequence of Galerkin approximations of \eqref{SPDE-SQG} as well, see Proposition \ref{proposition-upperbound} and Proposition \ref{prop-restricted-lower-bound} for details. Based on this large deviations result, we provide a direct relationship between regularity classes of the weak-strong uniqueness and regularity classes where the energy equality holds. Concretely, we show that the kinetic energy equality holds on every time-reversible set of a certain closure of the weak-strong uniqueness class $\mathcal{C}_0$. This introduces the following result. We introduce the time-reversal operator $\mathfrak{T}_T: \mathbb{X}_{\alpha,\alpha/2}\rightarrow\mathbb{X}_{\alpha,\alpha/2}$  defined by $ (\mathfrak{T}_T\theta)(\cdot):=-\theta(T-\cdot) $.
\begin{theorem}[Theorem \ref{theorem-energy-equality}]\label{Main-Theorem-2}
	
	Assume that	$ \beta=\alpha /2$. Let $\mathcal{C}_0\subset \mathbb{X}_{\alpha,\alpha/2}$ be a weak-strong uniqueness class of \eqref{PDE-SQG} such that $\mathcal{C}=\overline{\mathcal{C}_0}^{\mathcal{I}}$ contains non-empty time-reversible subsets. Let $\mathcal{R} \subset \mathcal{C}$ satisfy $\mathcal{R}=\mathfrak{T}_T \mathcal{R}$.   Suppose that $ \theta\in \mathcal{R} $ is a weak solution of \eqref{PDE-SQG} for some $g\in L^2([0,T];L^2(\mathbb{T}^2))$ in the sense of Definition \ref{def-weak-solution-skeleton-equation}, then the following kinetic energy equality holds:
	\begin{equation*}			\frac{1}{2}\|\theta(T)\|_{L^2(\mathbb{T}^2)}^2+\int_0^T\|\theta(s)\|_{H^\alpha(\mathbb{T}^2)}^2\,\mathrm{d}s=\frac{1}{2}\|\theta(0)\|_{L^2(\mathbb{T}^2)}^2+ \int_0^T\langle\Lambda^{\alpha}\theta,g\rangle \,\mathrm{d}s.
	\end{equation*} 
\end{theorem}
In the proof of Theorem \ref{Main-Theorem-2}, we rely on a time-reversibility argument for solutions of Galerkin approximation equation starting from equilibrium with $\delta=0$ (see Lemma \ref{time-revesible-property} below).  Hence we consider random initial data rather than deterministic ones. We further remark that in the proof for Theorem \ref{Main-Theorem-1}, the Assumption \ref{Gaussian-initial-data} for initial data could be extended to more general conditions (see \cite[Assumption 2.1]{GHW24_LLNS}).

We point out that the objects of discussion in the above relationship are purely deterministic. However, the understanding is based on the perspective of probability and the theory of large deviations. Furthermore, the proof of Theorem \ref{Main-Theorem-2} provides a sufficient condition for the validity of the energy equality \eqref{energy-equality} for arbitrary Leray solutions as well.  

{\bf Sufficient condition:} As long as one can prove a full large deviation principle for the Galerkin sequence of equation \eqref{SPDE-SQG} with respect to the canonical rate function, as stated in Theorem \ref{Main-Theorem-1}, the deterministic energy equality will hold for arbitrary Leray solutions of \eqref{PDE-SQG}.

More precisely, the following corollary implies that the violation of the kinetic energy equality breaks the lower bound of large deviations. 
\begin{corollary}\label{Main-Theorem-corollary}
	Let $ \varepsilon>0 $ and $ m(\varepsilon)\in \mathbb{N}_+ $ satisfy the scaling regime \eqref{scaling-m}. Let $\theta_{\varepsilon,m(\varepsilon)}$ be the solution of
	\eqref{SPDE-SQG-m} with initial data $ \theta_{\varepsilon, m(\varepsilon)}(0)\sim \mathcal{G}(0,\varepsilon P_{m(\varepsilon)}/2)$ and let $ \mu_{\varepsilon, m(\varepsilon)}$ be the law of $\theta_{\varepsilon, m(\varepsilon)}$ on $\mathbb{X}_{\alpha,\alpha/2}.$  
	Suppose that  $ \theta $ is a weak solution of \eqref{PDE-SQG} for some $g \in L^2([0, T];L^2(\mathbb{T}^2))$ in the sense of Definition \ref{def-weak-solution-skeleton-equation}.
	\item (i) Assume that
	\begin{equation*}
		\frac{1}{2}\|\theta(T)\|_{L^2(\mathbb{T}^2)}^2+\int_0^T\|\theta(s)\|_{H^\alpha(\mathbb{T}^2)}^2\,\mathrm{d}s>\frac{1}{2}\|\theta(0)\|_{L^2(\mathbb{T}^2)}^2+\int_0^T\langle\Lambda^{\alpha}\theta, g\rangle\,\mathrm{d}s,
	\end{equation*}
	then $\theta$ violates the large deviations lower bound:
	\begin{equation*}
		\inf _{\{ G\text{ is open in }\mathbb{X}_{\alpha,\alpha/2}:  \theta\in G\}} \liminf _{\varepsilon \rightarrow 0} \varepsilon \log \mu_{\varepsilon,m(\varepsilon)}(G)<-\mathcal{I}(\theta).
	\end{equation*}
	\item (ii)  Assume that
	\begin{equation*}
		\frac{1}{2}\|\theta(T)\|_{L^2(\mathbb{T}^2)}^2+\int_0^T\|\theta(s)\|_{H^\alpha(\mathbb{T}^2)}^2\,\mathrm{d}s<\frac{1}{2}\|\theta(0)\|_{L^2(\mathbb{T}^2)}^2+\int_0^T\langle\Lambda^{\alpha}\theta, g\rangle\,\mathrm{d}s,
	\end{equation*}
	then $\mathfrak{T}_T \theta$ violates the large deviations lower bound:
	\begin{equation*}
		\inf_{\{G\text{ is open in }\mathbb{X}_{\alpha,\alpha/2}, \,\mathfrak{T}_T \theta\in G\}} \liminf _{\varepsilon \rightarrow 0} \varepsilon \log \mu_{\varepsilon,m(\varepsilon)}(G)<-\mathcal{I}(\mathfrak{T}_T \theta).
	\end{equation*}
\end{corollary}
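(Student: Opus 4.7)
The plan is to exploit the time-reversibility of the Galerkin process at equilibrium together with the unrestricted large deviations upper bound (Theorem~\ref{Main-Theorem-1}), converting the hypothesized defect in the energy equality into an asymmetry of the rate function $\mathcal{I}$ under the involution $\mathfrak{T}_T$. Three ingredients are needed: (a) the invariance $\mu_{\varepsilon,m}\circ\mathfrak{T}_T^{-1}=\mu_{\varepsilon,m}$, from Lemma~\ref{time-revesible-property} combined with the Gaussian equilibrium initial data; (b) the upper bound $\limsup_{\varepsilon\to0}\varepsilon\log\mu_{\varepsilon,m}(F)\leq -\inf_F\mathcal{I}$ on closed sets; and (c) lower semicontinuity of $\mathcal{I}$, together with an explicit identity relating $\mathcal{I}(\mathfrak{T}_T\theta)-\mathcal{I}(\theta)$ to twice the defect in the energy equality.

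The central computation is the identity. Writing $\tilde\theta(t):=-\theta(T-t)$ and substituting the PDE for $\theta$ with control $g$, a direct calculation gives
\[
\partial_t\tilde\theta+\Lambda^{2\alpha}\tilde\theta+R^{\perp}\tilde\theta\cdot\nabla\tilde\theta=\Lambda^{2\beta}g(T-\cdot)+2\Lambda^{2\alpha}\tilde\theta,
\]
since only the dissipative term flips sign under the involution. Factoring $\Lambda^{2\beta}$ outside, using the specialization $\beta=\alpha/2$ to obtain $\|\Lambda^{2\beta}h\|_{H^{-2\beta}}=\|h\|_{L^2}$ on mean-zero data, expanding the resulting square, and changing variables $s=T-t$, I obtain
\[
\mathcal{I}_{dyna}(\mathfrak{T}_T\theta)-\mathcal{I}_{dyna}(\theta)=2\int_0^T\bigl(\|\Lambda^{\alpha}\theta\|_{L^2}^2-\langle\Lambda^{\alpha}\theta,g\rangle\bigr)\,\mathrm{d}s,
\]
while $\mathcal{I}_0(\mathfrak{T}_T\theta(0))-\mathcal{I}_0(\theta(0))=\|\theta(T)\|_{L^2}^2-\|\theta(0)\|_{L^2}^2$ because $\alpha-2\beta=0$. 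Summing yields $\mathcal{I}(\mathfrak{T}_T\theta)-\mathcal{I}(\theta)=2\Delta(\theta)$, where $\Delta(\theta)$ denotes the difference between the left- and right-hand sides of the kinetic energy equality. Case~(i) therefore gives $\mathcal{I}(\mathfrak{T}_T\theta)>\mathcal{I}(\theta)$, while case~(ii) gives the reverse strict inequality.

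With this identity in hand, the LDP violation in case~(i) proceeds as follows. Fix $\eta$ with $0<2\eta<\mathcal{I}(\mathfrak{T}_T\theta)-\mathcal{I}(\theta)$. By lower semicontinuity of $\mathcal{I}$, choose an open neighborhood $U$ of $\mathfrak{T}_T\theta$ whose closure satisfies $\inf_{\bar U}\mathcal{I}\geq \mathcal{I}(\mathfrak{T}_T\theta)-\eta$. Set $G:=\mathfrak{T}_T U$, which is an open neighborhood of $\theta$. Time-reversibility gives $\mu_{\varepsilon,m}(G)=\mu_{\varepsilon,m}(U)\leq \mu_{\varepsilon,m}(\bar U)$, so the upper bound yields
\[
\liminf_{\varepsilon\to0}\varepsilon\log\mu_{\varepsilon,m}(G)\leq -\inf_{\bar U}\mathcal{I}\leq -\mathcal{I}(\mathfrak{T}_T\theta)+\eta<-\mathcal{I}(\theta),
\]
the desired failure of the lower bound at $\theta$. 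Case~(ii) follows by the same argument with $\theta$ and $\mathfrak{T}_T\theta$ swapped, using that $\mathfrak{T}_T$ is an involution.

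The principal obstacle is the identity $\mathcal{I}(\mathfrak{T}_T\theta)-\mathcal{I}(\theta)=2\Delta(\theta)$: verifying that the cross term from expanding the square reduces cleanly to $\langle\Lambda^{\alpha}\theta,g\rangle$ requires a careful passage between the $H^{-2\beta}$ duality and the $L^2$ inner product, and it is precisely this step that invokes the specialization $\beta=\alpha/2$ rather than $\beta=\alpha/2+1/4$. Once the identity is secured, the remaining ingredients—time-reversibility at equilibrium, the unrestricted upper bound for the Galerkin laws, lower semicontinuity of $\mathcal{I}$, and the topological manipulation with $\mathfrak{T}_T$—are routine.
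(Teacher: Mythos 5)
Your proposal is correct and is essentially the paper's own argument in contrapositive form: the paper assumes the lower bound holds and derives the energy inequality via $\mathcal{I}(\theta)\geq\mathcal{I}(\mathfrak{T}_T\theta)$, whereas you run the same chain forward, but both rest on the identical ingredients — Lemma \ref{time-revesible-property}, the upper bound of Proposition \ref{proposition-upperbound}, lower semicontinuity of $\mathcal{I}$, and the identity $\mathcal{I}(\mathfrak{T}_T\theta)-\mathcal{I}(\theta)=2\Delta(\theta)$ coming from \eqref{eq-I-theta} and \eqref{eq-I-theta-reversible}. Your computation of $\tilde g$ and of the rate-function difference matches the paper's, so no gap.
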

Finally, we provide an analysis on the exact representation of the quasi-potential \eqref{Quasi-potential}. Let $ \mathcal{C}_0 $ be a  weak-strong uniqueness class on $ [0,\infty)$ (See Definition \ref{def-weak-strong-uniqueness-infty}). Denote \begin{equation*}
	\begin{aligned}
		\mathcal{A}_0:=\Big\lbrace \theta\in\mathcal{C}_0: 	\partial_t\theta\in L^2([0,\infty) ; H^{\alpha-2\beta}(\mathbb{T}^2)), R^{\perp}\theta\cdot\nabla \theta\in L^2([0,\infty) ;  H^{\alpha-2\beta}(\mathbb{T}^2))&, \\ \lim\limits_{t\rightarrow \infty}\|\theta(t)\|_{H^{\alpha-2\beta}(\mathbb{T}^2)}=0& \Big\rbrace.
	\end{aligned}
\end{equation*}		

\begin{theorem}[Proposition \ref{quasi-Gaussian}]\label{Main-Theorem-3} Let $\mathcal{U}$ be the quasi-potential defined by \eqref{Quasi-potential}. Then
	\item (i)  For every $\phi\in H^{\alpha-2\beta}(\mathbb{T}^2)$, it holds that $\mathcal{U}(\phi)\geq\|\phi\|_{H^{\alpha-2\beta}(\mathbb{T}^2)}^2$.
	\item (ii)
	The equality $\mathcal{U}(\phi)=\|\phi\|_{H^{\alpha-2\beta}(\mathbb{T}^2)}^2$ holds for a given $\phi\in H^{\alpha-2\beta}(\mathbb{T}^2)$ as long as the equation
	\begin{equation*}
		\left\{\begin{array}{l}
			\partial_t\bar\theta=-\Lambda^{2\alpha} \bar\theta-R^{\perp}\bar\theta\cdot\nabla \bar\theta, \\
			\bar\theta(0)=\phi,
		\end{array}\right.
	\end{equation*}
	admits a unique weak solution in $\mathcal{A}_0 $ in the sense of Definition \ref{def-sol-infinity}.
\end{theorem}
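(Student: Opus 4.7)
My plan is to prove both halves through a single energy-type decomposition of the quasi-potential integrand, then to exhibit an explicit minimizing path for (ii) by time-reversing the unforced SQG solution granted by the hypothesis.

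\textbf{Key decomposition.} Writing $F(\tilde\theta) := \partial_s\tilde\theta + \Lambda^{2\alpha}\tilde\theta + R^{\perp}\tilde\theta\cdot\nabla\tilde\theta$, I would first establish the pointwise (in $s$) identity
\[
\|F(\tilde\theta)\|_{H^{-2\beta}}^2 \;=\; 2\frac{d}{ds}\|\tilde\theta\|_{H^{\alpha-2\beta}}^2 \;+\; \|F(\tilde\theta) - 2\Lambda^{2\alpha}\tilde\theta\|_{H^{-2\beta}}^2.
\]
Expanding the right-hand side, this reduces to the cross-term formula $\langle F(\tilde\theta),\Lambda^{2\alpha}\tilde\theta\rangle_{H^{-2\beta}} = \tfrac12\tfrac{d}{ds}\|\tilde\theta\|_{H^{\alpha-2\beta}}^2 + \|\Lambda^{2\alpha}\tilde\theta\|_{H^{-2\beta}}^2$. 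The only nontrivial point is the cancellation $\langle R^{\perp}\tilde\theta\cdot\nabla\tilde\theta,\,\Lambda^{2\alpha-4\beta}\tilde\theta\rangle = 0$. For $\beta=\alpha/2$ this is the standard divergence-free identity. For $\beta=\alpha/2+1/4$ one has $\Lambda^{2\alpha-4\beta}=\Lambda^{-1}$, and the identity follows by integration by parts against the divergence-free flow together with the pointwise vector cancellation $R^{\perp}f\cdot\nabla\Lambda^{-1}f = R^{\perp}f\cdot Rf = -R_2f\cdot R_1f + R_1f\cdot R_2f = 0$, which is the Riesz-transform structure alluded to in the introduction.

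\textbf{Proof of (i).} I would integrate the key identity over $s\in(-\infty,0]$, using $\tilde\theta(0)=\phi$ together with the decay $\lim_{s\to-\infty}\|\tilde\theta(s)\|_{H^{\alpha-2\beta}}=0$ enforced by the admissibility condition $-\tilde\theta(-\cdot)\in\mathcal{A}_0$. The result is
\[
\int_{-\infty}^0\|F(\tilde\theta)\|_{H^{-2\beta}}^2\,ds \;=\; 2\|\phi\|_{H^{\alpha-2\beta}}^2 \;+\; \int_{-\infty}^0\|F(\tilde\theta) - 2\Lambda^{2\alpha}\tilde\theta\|_{H^{-2\beta}}^2\,ds \;\geq\; 2\|\phi\|_{H^{\alpha-2\beta}}^2.
\]
Taking the infimum over admissible $\tilde\theta$ and dividing by two yields $\mathcal{U}(\phi)\geq\|\phi\|_{H^{\alpha-2\beta}}^2$.

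\textbf{Proof of (ii).} Given the unique weak solution $\bar\theta\in\mathcal{A}_0$ of the unforced SQG equation from the hypothesis, I would construct the minimizing path by time-reversal, setting $\tilde\theta(s):=-\bar\theta(-s)$ for $s\leq 0$. Admissibility is immediate: $\tilde\theta(0)=\phi$, and $-\tilde\theta(-\cdot)=\bar\theta\in\mathcal{A}_0$ (up to the sign convention matching the statement, which in turn amounts to applying the hypothesis at $\pm\phi$, since the class $\mathcal{A}_0$ is closed under negation). Substituting the equation $\partial_t\bar\theta = -\Lambda^{2\alpha}\bar\theta - R^{\perp}\bar\theta\cdot\nabla\bar\theta$ into $F(\tilde\theta)$ shows that all nonlinear terms cancel against the derivative and one obtains $F(\tilde\theta) = 2\Lambda^{2\alpha}\tilde\theta$ identically. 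The remainder term in the key identity then vanishes and the lower bound of (i) is attained with equality, i.e., $\int_{-\infty}^0\|F(\tilde\theta)\|_{H^{-2\beta}}^2\,ds = 2\|\phi\|_{H^{\alpha-2\beta}}^2$, so $\mathcal{U}(\phi)\leq\|\phi\|_{H^{\alpha-2\beta}}^2$. Uniqueness is invoked to make the identification of the minimizer unambiguous.

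\textbf{Main obstacle.} The delicate step is justifying the chain rule $\langle\partial_s\tilde\theta,\Lambda^{2\alpha-4\beta}\tilde\theta\rangle = \tfrac12\tfrac{d}{ds}\|\tilde\theta\|_{H^{\alpha-2\beta}}^2$ for candidate paths of low regularity, together with the nonlinear cancellation in the supercritical regime $\beta=\alpha/2+1/4$, where the Riesz commutator manipulations must be made rigorous at the level of weak formulations. The built-in regularity of $\mathcal{A}_0$, namely $\partial_t\theta,\ R^{\perp}\theta\cdot\nabla\theta\in L^2([0,\infty);H^{\alpha-2\beta}(\mathbb{T}^2))$, together with density/approximation arguments against smooth test fields, should supply exactly what is needed to close the identity and to pass limits in the infimum defining $\mathcal{U}(\phi)$.
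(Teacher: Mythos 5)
Your proposal is correct and follows essentially the same route as the paper: the identical completion-of-the-square decomposition of the cost integrand into the total derivative $2\tfrac{d}{ds}\|\tilde\theta\|_{H^{\alpha-2\beta}}^2$ plus the nonnegative remainder $\|F(\tilde\theta)-2\Lambda^{2\alpha}\tilde\theta\|_{H^{-2\beta}}^2$, integrated over $(-\infty,0]$ using the decay built into $\mathcal{A}_0$, with the time-reversed unforced solution furnishing the path that kills the remainder in (ii). The only points you treat more explicitly than the paper are the justification of the nonlinear cancellation $\langle R^{\perp}\tilde\theta\cdot\nabla\tilde\theta,\Lambda^{2\alpha-4\beta}\tilde\theta\rangle=0$ in the case $\beta=\alpha/2+1/4$ via the Riesz-transform identity, and the $\pm\phi$ sign convention in the time reversal, both of which the paper leaves implicit.
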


\subsection{Key idea and technical comment}

\

{\bf Large deviations.} Concerning the large deviations of \eqref{SPDE-SQG}, the main obstacles arise from two aspects. On the one hand, generalized Leray solutions of \eqref{SPDE-SQG} are probabilistically weak and therefore cannot be represented as measurable maps of Brownian paths. This prevents us from using the well-known weak convergence approach by Budhiraja, Dupuis, and Maroulas \cite{BDM08_LDP} for large deviations of stochastic PDEs. On the other hand, the uniqueness of the skeleton equation \eqref{PDE-SQG} is unknown in such critical and supercritical ($\alpha\in(0,1/2]$) regimes. Large deviations for subcritical stochastic SQG equations driven by multiplicative noise have been studied by Liu, R\"{o}ckner, and Zhu in \cite{LRZ13_LDP}, under the assumption that $ \theta_0\in L^p\cap H^\delta $, where $ \delta >2-2\alpha$ and $ 1/p\in(0,\alpha-1/2)$. In this case, the pathwise uniqueness of the stochastic SQG equations is obtained, and the skeleton equation also has a unique solution, which leads to the weak convergence approach. In general, the well-posedness of the skeleton equation plays a key role in large deviations. We refer readers to \cite{Hey23} for more details, where a counterexample is proposed to illustrate the violation of the lower bound due to the lack of uniqueness. Also see \cite[Section 8]{FG23} for the consistency of the rate function and its lower semi-continuous envelope with the help of the uniqueness and stability of the skeleton equation. These two obstacles have also been encountered in the study of three-dimensional Landau-Lifshitz-Navier-Stokes equations and were resolved by Gess, Heydecker, and the second author \cite{GHW24_LLNS}. To summarize the key idea, the upper and lower bounds are investigated separately, with the lower bound being obtained manually by restricting to the weak-strong uniqueness class. In this context, we will adopt the same idea as in \cite{GHW24_LLNS} on the probabilistic side, but extend it on the analytic side. Specifically, we generalize this argument into an $\dot H^{-1/2}$-framework.

We consider a generalized Leray solution theory for \eqref{SPDE-SQG} with an $\dot H^{-\gamma}$-energy inequality, where $\gamma=0$ or $1/2$. The negative Sobolev's regularity directly produces obstacles in the well-definedness of the nonlinear term $u_{\theta_\varepsilon}\cdot\nabla\theta_{\varepsilon}$, since the pointwise product cannot be defined for distributions. Alternatively, a commutator approach is employed to make the nonlinear term well-defined. Specifically, for every $\theta,\phi\in C^{\infty}(\mathbb{T}^2)$, we denote
\begin{equation*}
	\left[\Lambda, \partial_i \phi\right](\Lambda^{-1}\theta):=\Lambda(\partial_i \phi \Lambda^{-1}\theta)-\partial_i \phi \theta 
\end{equation*}
as the commutator between $\Lambda$ and $\partial_i \phi$ for $i=1,2$. An intuitive computation shows the identity 
\begin{equation}	\label{commutator identity}
	\int_{\mathbb{T}^2} \theta R^{\perp}\theta \cdot \nabla \phi \,\mathrm{d}x=\frac{1}{2}\int_{\mathbb{T}^2} R_2 \theta\left[\Lambda, \partial_1 \phi\right]\left(\Lambda^{-1} \theta\right) \,\mathrm{d} x-\frac{1}{2}\int_{\mathbb{T}^2} R_1 \theta\left[\Lambda, \partial_2 \phi\right]\left(\Lambda^{-1} \theta\right) \,\mathrm{d}x. 
\end{equation}
Moreover, for every $\theta\in \mathbb{X}_{\alpha,\beta}$ and every $\phi\in C^{\infty}(\mathbb{T}^2)$, the right-hand side of \eqref{commutator identity} is well-defined as well. This suggests that the right-hand side of \eqref{commutator identity} can be taken as the  definition of the nonlinear term $\langle	\nabla \cdot\left(\theta R^{\perp}\theta\right),\phi\rangle$,  see Lemma \ref{lemma-calderon} later on for more details.

Secondly, the compactness argument lays a fundamental in the study of both the existence of solutions and large deviations. In general, problems concerning the passage to the limit will be encountered in such arguments. Regarding the passage to the limit of the nonlinear term, the Littlewood-Paley theory is employed. Precisely, $ 	\nabla \cdot\left(\theta R^{\perp}\theta\right) $ can be decomposed into three parts: for every $j \in \mathbb{Z}$,
\begin{equation}\label{decomosition-intro}
	\nabla \cdot\left(\theta R^{\perp}\theta\right)=\nabla \cdot\left(\theta R^{\perp}\left(S_j \theta\right)\right)+\nabla \cdot\left(S_j \theta R^{\perp}\left(H_j \theta\right)\right)+\nabla \cdot\left(H_j \theta R^{\perp}\left(H_j \theta\right)\right),
\end{equation}
where $ H_j \theta $ is the high-frequencies part of $\theta$ and $S_j \theta$ is the low-frequencies part  defined by \eqref{low-and-high} in Section \ref{section-Preliminary}. It has been shown by Marchand in \cite[Lemma 9.3]{Mar08_existence} that the first two terms on the right-hand side of \eqref{decomosition-intro}  are easier to estimate due to the regularizing effect of $ S_j$. However, the estimation of $ \nabla \cdot\left(H_j \theta R^{\perp}\left(H_j \theta\right)\right)$ is more challenging and requires the commutator estimates. In particular, for every $\phi\in C^{\infty}(\mathbb{T}^2)$, $\iota>0$ and $ r\in(-1,1/2], $ we have 
$$
_{\dot{H}^{-3-r-\iota}}\left\langle\nabla \cdot\left(H_j \theta R^{\perp}\left(H_j \theta\right)\right), \phi\right\rangle_{\dot{H}^{3+r+\iota}}\leq C\|\phi\|_{\dot H^{3+r+\iota}(\mathbb{T}^2)}\left\|H_j \theta\right\|_{\dot H^{-r}(\mathbb{T}^2)}^2.
$$	

{\bf The energy equality.}
In this part, we summarize the key idea of the proof for Theorem \ref{Main-Theorem-2}. The main idea arises from \cite[Section 11]{GH23} and \cite[Section 9]{GHW24_LLNS}. In particular, in \cite[Section 9]{GHW24_LLNS}, the authors provide a purely analytic approach concerning the relationship between the energy equality and the weak-strong uniqueness class for three-dimensional forced Navier-Stokes equations. Moreover, they provide a brief explanation of its probabilistic interpretation. In this paper, we adopt the probabilistic approach for proving this relationship when $\beta=\alpha/2$.

The key ingredient is to utilize the fact that the Gaussian distribution $\mathcal{G}(0,\varepsilon P_m/2) $ is an invariant measure for the Galerkin approximation of \eqref{SPDE-SQG}: 
\begin{equation}\label{SPDE-SQG-m}
	\partial_t\theta_{\varepsilon,m}=-\Lambda^{2\alpha}\theta_{\varepsilon,m}-P_m(u_{\theta_{\varepsilon,m}}\cdot\nabla\theta_{\varepsilon,m})+\varepsilon^{1/2}P_m\Lambda^{\alpha}\xi,
\end{equation}
where $P_m$ denotes the $m$-dimensional projection operator (see Section \ref{section-Tightness} for a rigorous definition). Thanks to the time-reversibility (see Lemma \ref{time-revesible-property})
$$
\mathfrak{T}_T \theta_{\varepsilon, m}:=\left(-\theta_{\varepsilon, m}(T-t): 0 \leq t \leq T\right)\overset{\mathrm{d}}{=}\left(\theta_{\varepsilon, m}(t): 0 \leq t \leq T\right), 
$$
both $\theta_{\varepsilon, m(\varepsilon)}$ and its time-reversal process $\mathfrak{T}_T \theta_{\varepsilon, m(\varepsilon)}$ satisfy large deviations as presented in Theorem \ref{Main-Theorem-1}. On the domain where the large deviations lower bound matches the upper bound ($\mathcal{C}$ in Theorem \ref{Main-Theorem-1}), the uniqueness of the large deviations rate function holds. Therefore, we have that $\mathcal{I}(\mathfrak{T}_T\theta)=\mathcal{I}(\theta)$ for all $\theta\in\mathbb{X}_{\alpha,\alpha/2}$ satisfying $\theta,\mathfrak{T}_T\theta\in\mathcal{C}$. For any weak solution $\theta$ of \eqref{PDE-SQG}, direct computation shows that 
\begin{align*}
	\mathcal{I}(\theta)&=\|\theta(0)\|_{L^2(\mathbb{T}^2)}^2,\\ \mathcal{I}(\mathfrak{T}_T\theta)&=\|\theta(T)\|_{L^2(\mathbb{T}^2)}^2+2\int_0^T\|\theta(s)\|_{H^\alpha(\mathbb{T}^2)}^2\,\mathrm{d}s-2\int_0^T\langle\Lambda^{\alpha}\theta,g\rangle \,\mathrm{d}s.
\end{align*}
Then the kinetic energy equality holds as long as $\theta,\mathfrak{T}_T\theta\in\mathcal{C}$. Based on this observation, one can see that the uniqueness of the rate function plays a key role. Since the uniqueness holds on the domain where the lower bound matches the upper bound, then one can provide a sufficient condition concerning the open problem of the validity of the kinetic energy equality for arbitrary Leray solutions. As long as  a full large deviation of \eqref{SPDE-SQG-m} on $\mathbb{X}_{\alpha,\alpha/2}$ with respect to the rate function $\mathcal{I}$ was proved, then this implies that the kinetic energy equality holds on $\mathbb{X}_{\alpha,\alpha/2}$ without restriction.
\subsection{Comments of literature}

\ 

{\bf{Large deviations for stochastic PDEs in singular limits.}}
Large deviations of stochastic scalar conservation laws in the joint limits of vanishing noise and viscosity were studied by Mariani \cite{Mariani10}. In the work of Dirr, Fehrman, and Gess \cite{DFG24}, large deviations for a conservative stochastic PDE corresponding to the simple symmetric exclusion process were established. Fehrman and Gess \cite{FG23} demonstrated that a class of generalized Dean-Kawasaki type equations satisfies large deviations with the same rate function as that of the zero range process. The second author and Zhang \cite{WZ24} extended the analysis to a more general case concerning singular interactions for the Dean-Kawasaki equation. Gess, Heydecker, and the second author \cite{GHW24_LLNS} proved the large deviation principle for the Landau-Lifshitz-Navier-Stokes equations, whose rate function is consistent with the Quastel-Yau lattice gas model \cite{QY98}. Further works concerning large deviations for stochastic PDEs in scaling limits with vanishing noise intensity and correlations can be found in \cite{HW15_LDP, CD19_LDP1, CD19_LDP2, CP22}. 

{\bf{Energy equality for the SQG equation.}} Energy conservation of the inviscid SQG equation is related to Onsager's conjecture \cite{Ons49}. As previously introduced, there are two types of energy that have physical significance for the SQG equation: the Hamiltonian and the kinetic energy. Isett and Vicol proved in \cite{IV15_2d_Onsager} that $\theta \in L^3_t L^3_x$ implies the conservation of the Hamiltonian $\mathcal{H}$. For sufficiently regular solutions, the $L^2$-norm (the kinetic energy) is conserved as well. Several Besov-type regularity conditions are proposed for weak solutions to guarantee the conservation of the kinetic energy, see \cite{Zho05_energy}, \cite{Cha06_energy}, \cite{WYY23_energy}. Dai \cite{Dai17_energy} investigated the kinetic energy equality for viscosity solutions of the super-critical dissipative SQG equation under some regularity conditions. 

{\bf{Weak-strong uniqueness for the SQG equation.}} 
Results of weak-strong uniqueness for the SQG equations with different fractional dissipation index $ \alpha $ and different regularity conditions for initial data $ \theta_0 $ can be found in the following literature. Constantin and Wu \cite{CW99_Weak_strong} proved the uniqueness of  strong solution when $ \alpha\in(1/2,1] $ with $ \theta_0$ in $ \dot H^{-1/2} $ or $ L^2 $ , which is analogous to the Ladyzhenskaya-Prodi-Serrin condition of the Navier-Stokes equation \cite{KL57,Pro59,Ser62}.  In the critical and supercritical cases $ \alpha\in(0,1/2] $, there are several results for $L^2$ initial data. Dong and Chen in \cite{DC06_weak_strong}  proposed Ladyzhenskaya-Prodi-Serrin type condition $ L^r_tL^p_x $ to $ \nabla\theta $ and showed a weak-strong uniqueness property. In \cite{DC12_weak_strong_uniqueness}, the authors extended this regularity condition to Besov space $ L^r_tB^0_{p,\infty} $. For the critical case $ \alpha=1/2 $, Marchand \cite{Mar08_Weak_strong_critical} proved the weak-strong uniqueness in BMO-type space. Liu, Jia, and Dong \cite{LJD12} proposed another BMO-type condition and proved the weak-strong uniqueness for all $ \alpha\in(0,1).$
We will summarize examples of weak-strong uniqueness for subcritical, critical, and supercritical cases in Appendix \ref{section-example}, and illustrate that these regularity classes can be taken as examples of the domain where we restrict the lower bound of large deviations. Furthermore, we point out that, to the best of our knowledge, weak-strong uniqueness results for the supercritical case $\alpha\in(0,1/2)$ with $ \theta_0$ in $ \dot H^{-1/2} $ have not been obtained. In this case, the dissipation term is not sufficient to prevent weak solutions from being distribution-valued. Therefore, it is challenging to propose a reasonable regularity condition for proving the uniqueness. 

{\bf{Relationship to the study of convex integration.}} 
From the previous explanations, it can be inferred that obtaining matching upper and lower bounds for large deviations is related to the uniqueness of solutions for  \eqref{PDE-SQG}.
Recently, the convex integration technique
has led to various non-uniqueness results for fluid equations.
Buckmaster, Shkoller, and Vicol \cite{BSV19_Nonuniqueness_SQG} demonstrated the non-uniqueness of weak solutions for both inviscid and dissipative SQG equations by converting the equation into momentum form. Subsequently, different convex integration schemes have been employed to construct non-unique solutions for inviscid and dissipative SQG equations, as illustrated in \cite{CKL21} and \cite{IM21}. All of these works focus on the SQG equations without external forces, where the solutions satisfy $ \Lambda^{-1}\theta\in C_t^\sigma  C_x^s $ for some $ \sigma $ and $ s $. However, it remains an open question whether  $\theta\in L^2_tH^\alpha_x$, which is the space considered in Theorem \ref{Main-Theorem-2}. For example, in \cite{BSV19_Nonuniqueness_SQG}, the solutions were constructed in $C_t^\sigma  C_x^s $ with $ 1/2<s<4/5$ and $ \sigma<s/(2-s). $ Therefore, these non-uniqueness results do not imply a violation of the full large deviations. 

In \cite{HZZ23} and \cite{HLZZ24}, the authors discussed the non-uniqueness and non-Gaussianity for the stochastic SQG equations driven by irregular spatial noise and space-time white noise, respectively.  We remark that, in \cite{HLZZ24}, infinitely many non-Gaussian ergodic stationary solutions of \eqref{SPDE-SQG} (with $ \delta=0 $) in $ B^{-1/2}_{p,1}, p\geq 2, $ were constructed. We hope to connect the discussion of non-Gaussian stationary solutions with the uniqueness problem of deterministic SQG equations through the non-Gaussian large deviations in Section \ref{section-quasipotential}.

\subsection{Structure of the paper}

This paper is organized as follows.  In Section \ref{section-Preliminary}, we introduce some preliminaries which will be used throughout the paper. Section \ref{section-Tightness} is dedicated to proving the existence and exponential tightness for stochastic generalized Leray solutions.  Section \ref{section-rate} presents a variational characterization of the rate function. In Section \ref{section-upper}, we establish the upper bound for large deviations. Section \ref{section-lower} provides a restricted lower bound for large deviations using the entropy method. In Section \ref{section-energy-eq}, we prove a relationship between the weak-strong uniqueness class and the energy equality for \eqref{PDE-SQG}. Finally, in Section \ref{section-quasipotential}, we discuss an explicit representation of the quasi-potential. Additionally, Appendix \ref{section-example} summarizes several examples of weak-strong uniqueness classes concerning different ranges of parameters $\alpha$ and $\beta$.

\ 
						
\section{Preliminary}\label{section-Preliminary}

This section is devoted to presenting some preliminaries. 

\subsection{Notations and spaces}

Throughout the paper, we use the notation $a \lesssim b$ to denote that there exists a constant $C>0$ such that $a \leqslant Cb$, where $ C $ may change from line to line.

By testing the constant function $1$ for \eqref{PDE-SQG}, one can see that the spatial average of the solutions is conserved. Therefore, we assume that the zero Fourier mode of the initial data is zero. In this way, we study \eqref{PDE-SQG} and \eqref{SPDE-SQG} in spaces with zero mean. 

For any two topological spaces $ X_1 $ and $ X_2 $, $C(X_1; X_2)$ is defined as  the space of continuous functions from $X_1$ to $X_2$.  $ C_b(X_1) $ is defined as the space of bounded continuous functions from $X_1$ to $\mathbb{R}$.  Given a Banach space $E$ with norm $\|\cdot\|_E$, for any $ \sigma\in (0,1) $,  we write $C^\sigma([0, T] ; E)$ to denote the space of $\sigma$-H\"older continuous functions from $[0, T]$ to $E$ endowed with the semi-norm  
\begin{equation*} 
	\|f\|_{C^\sigma([0, T] ; E)}:=\sup _{s, t \in[0, T], s \neq t} \frac{\|f(s)-f(t)\|_E}{|t-s|^\sigma}.
\end{equation*}
Given $p>1, \sigma \in(0,1)$, let $W^{\sigma, p}([0,T];E)$ denote the space of  all $f \in L^p([0,T]; E)$ such that
\begin{equation*}
	\int_0^T \int_0^T \frac{\|f(t)-f(s)\|_E^p}{|t-s|^{1+\sigma p}} \mathrm{\,d}t \,\mathrm{d} s<\infty,
\end{equation*}
endowed with the norm
$	\|f\|_{W^{\sigma, p}([0,T]; E)}^p:=\int_0^T\|f(t)\|_E^p  \mathrm{\,d}t +\int_0^T \int_0^T \frac{\|f(t)-f(s)\|_E^p}{|t-s|^{1+\sigma p}}  \mathrm{\,d}t\, \mathrm{d} s$.
We denote the distribution space on $\mathbb{T}^2$ by $\mathscr{D}^\prime(\mathbb{T}^2)$, which is the dual space of $C^\infty(\mathbb{T}^2)$. Let $f\in\mathscr{D}^\prime(\mathbb{T}^2)$ and let $ g =(g(k))_{k \in \mathbb{Z}^2}$ be at most of polynomial growth. The Fourier transform of $f$ and the inverse Fourier transform of $g$  are defined as

\begin{align*}
	\mathcal{F}_{\mathbb{T}^2} f(k)&:=\hat{f}(k):=\int_{\mathbb{T}^2} e^{2\pi \mathrm{i}x \cdot k} f(x) \mathrm{\,d} x,  \quad k \in \mathbb{Z}^2, \\ \mathcal{F}_{\mathbb{T}^2}^{-1} g(x)&:= \sum_{k \in \mathbb{Z}^2} e^{2\pi \mathrm{i}x \cdot k} g(k),\quad x\in \mathbb{T}^2.
\end{align*}								
Let $H:=\left\{f \in L^2\left(\mathbb{T}^2\right): \int_{\mathbb{T}^2} f \,\mathrm{d}x=0\right\}$ and let $\langle\cdot,\cdot\rangle$ denote the inner product on $ H. $ 	Let $\mathbb{Z}_{+}^2:=\left\{\left(k_1, k_2\right) \in \mathbb{Z}^2|k_2>0\right\} \cup\left\{\left(k_1, 0\right) \in \mathbb{Z}^2| k_1>0\right\}$ and $ \mathbb{Z}_{-}^2:=\left\{\left(k_1, k_2\right) \in\mathbb{Z}^2 |-k \in \mathbb{Z}_{+}^2\right\},
$
then $$\left\{\sqrt{2}\sin(2\pi k\cdot)| k \in\mathbb{Z}_{+}^2\right\} \cup\left\{\sqrt{2}\cos(2\pi k\cdot)|k \in \mathbb{Z}_{-}^2\right\} $$
is an orthonormal eigenbasis of $\Lambda$ on $ H $, which we denote by $\{e_k\}$. For every $s>0$, define
$	\|f\|_{H^s(\mathbb{T}^2)}^2:=\sum_{k\in\mathbb{Z}^2}|2\pi k|^{2s}\big|\hat{f}(k)\big|^2$,
and let $H^s(\mathbb{T}^2)$ denote the space of all $f \in H$ for which $\|f\|_{H^s(\mathbb{T}^2)}$ is finite. Define $H^{-s}(\mathbb{T}^2)$ to be the dual of $H^{s}(\mathbb{T}^2)$.
Finally, let $H^\infty(\mathbb{T}^2):=\bigcap\limits_{s>0}H^s(\mathbb{T}^2)$.

Now we recall the Littlewood-Paley decomposition briefly. Let $\varphi \in C_c^\infty\left(\mathbb{R}^2\right)$ be a non-negative radial function so that $\varphi(\xi)=1$ for $|\xi| \leq 1 / 2$ and $\varphi(\xi)=0$ for $|\xi| \geq 1$. Let  $\psi(\xi)=\varphi(\xi / 2)-\varphi(\xi)$. Then $ \big(\psi(\cdot/2^j)\big)_{j\in \mathbb{Z}} $ is a dyadic partition of unity. For every $j \in \mathbb{Z}$, we define the $j$-th  dyadic block of the Littlewood-Paley decomposition of $ f\in \mathscr{D}^\prime(\mathbb{T}^2)$ by $
\Delta_j f:=\mathcal{F}_{\mathbb{T}^2}^{-1}\left(\psi\left(\cdot/ 2^j\right) \hat{f}\right)$.
The low-frequencies and high-frequencies cutting-off operators are defined by
\begin{equation}\label{low-and-high}
	S_j f:=\mathcal{F}_{\mathbb{T}^2}^{-1}\left(\varphi\left(\cdot/ 2^j\right) \hat{f}\right)= \sum_{k=-\infty}^{j-1} \Delta_k f, \quad
	H_j f:=\left(Id-S_j\right) f=\sum_{k=j}^{\infty}  \Delta_k f .
\end{equation}
For every $ \phi\in H^\infty(\mathbb{T}^2)$, the commutator between $\Lambda$ and $\phi$ is defined by $[\Lambda, \phi]g:=\Lambda(\phi g)-\phi\Lambda g$, for any $ g\in H^\infty(\mathbb{T}^2). $ 
The following lemma enables us to introduce the commutator estimate, which will contribute to the well-definedness of the nonlinear term.
\begin{lemma}\label{lemma-comutator}
	Let $\theta, \phi  \in H^\infty\left(\mathbb{T}^2\right)$, then
	\begin{equation*}
		2 \int_{\mathbb{T}^2}\theta R^{\perp}\theta \cdot \nabla \phi \,\mathrm{d}x=\int_{\mathbb{T}^2}R_2 \theta\left[\Lambda, \partial_1 \phi\right]\left(\Lambda^{-1} \theta\right) \,\mathrm{d} x-\int_{\mathbb{T}^2} R_1 \theta\left[\Lambda, \partial_2 \phi\right]\left(\Lambda^{-1} \theta\right) \,\mathrm{d}x.
	\end{equation*}
\end{lemma}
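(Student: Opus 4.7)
The plan is to establish the identity by two successive integrations by parts, using the rearranged commutator identity
\[
(\partial_i\phi)\theta = \Lambda(\partial_i\phi\,\Lambda^{-1}\theta) - [\Lambda,\partial_i\phi](\Lambda^{-1}\theta),
\]
together with the self-adjointness of $\Lambda$ on zero-mean functions and the relation $\Lambda R_j = \partial_j$. Since $\theta,\phi\in H^\infty(\mathbb{T}^2)$ and the torus has no boundary, every such manipulation is fully justified and no boundary terms appear.

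First, I would expand $R^{\perp}\theta = (-R_2\theta,R_1\theta)$ to write
\[
\int_{\mathbb{T}^2}\theta R^{\perp}\theta\cdot\nabla\phi\,\mathrm{d}x = -\int_{\mathbb{T}^2}R_2\theta\,(\partial_1\phi)\theta\,\mathrm{d}x + \int_{\mathbb{T}^2}R_1\theta\,(\partial_2\phi)\theta\,\mathrm{d}x,
\]
and in each term I would substitute the rearranged commutator identity for $(\partial_i\phi)\theta$. For the $\Lambda$-pieces, self-adjointness of $\Lambda$ together with $\Lambda R_j = \partial_j$ reduces $\int R_j\theta\,\Lambda(\partial_i\phi\,\Lambda^{-1}\theta)\,\mathrm{d}x$ to $\int(\partial_j\theta)(\partial_i\phi)\Lambda^{-1}\theta\,\mathrm{d}x$. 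Grouping the four resulting contributions yields
\[
\int_{\mathbb{T}^2}\theta R^{\perp}\theta\cdot\nabla\phi\,\mathrm{d}x = J + \bigl(\text{RHS of the lemma}\bigr),
\]
where $J := \int_{\mathbb{T}^2}\bigl[(\partial_1\theta)(\partial_2\phi) - (\partial_2\theta)(\partial_1\phi)\bigr]\Lambda^{-1}\theta\,\mathrm{d}x$ is a Jacobian-type remainder.

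The closing step, and the only place where I expect to need to be careful, is to show that $J = -\int_{\mathbb{T}^2}\theta R^{\perp}\theta\cdot\nabla\phi\,\mathrm{d}x$; once this is established, rearranging immediately produces the factor $2$ in the statement. For this secondary identity, one more integration by parts moves $\partial_1,\partial_2$ off $\theta$ onto the product $(\partial_i\phi)\Lambda^{-1}\theta$; the mixed second derivatives $\partial_1\partial_2\phi$ and $\partial_2\partial_1\phi$ cancel by symmetry of $\phi$, and the remaining terms reassemble via $\partial_j\Lambda^{-1}\theta = R_j\theta$ into exactly $-\int_{\mathbb{T}^2}\theta R^{\perp}\theta\cdot\nabla\phi\,\mathrm{d}x$. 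The main obstacle is essentially pure sign bookkeeping: the minus from integration by parts, the minus hidden in $R^{\perp}=(-R_2,R_1)$, and the antisymmetry of the Jacobian must combine consistently so that $J$ produces a copy of the left-hand side with opposite sign rather than the same sign. Once this is verified, the loop closes and the lemma follows.
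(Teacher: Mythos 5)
Your argument is correct: the substitution of $\partial_i\phi\,\theta=\Lambda(\partial_i\phi\,\Lambda^{-1}\theta)-[\Lambda,\partial_i\phi](\Lambda^{-1}\theta)$, the reduction of the $\Lambda$-pieces via $\Lambda R_j=\partial_j$, and the final integration by parts showing that the Jacobian remainder $J$ equals $-\int_{\mathbb{T}^2}\theta R^{\perp}\theta\cdot\nabla\phi\,\mathrm{d}x$ all check out, and the signs are consistent regardless of the sign convention for $R_j$ since it enters both sides an even number of times. This is essentially the same integration-by-parts argument the paper invokes by citing \cite[Lemma 2.1]{Mar08_existence}, so no further comparison is needed.
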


Lemma \ref {lemma-comutator} can be proved by integration by parts, see for example, \cite[Lemma 2.1]{Mar08_existence}. 

\begin{lemma}[Commutator Estimate]\label{lemma-calderon}
	For every $r \in(-1,1)$, let $\phi \in H^\infty\left(\mathbb{T}^2\right)$ and $g \in H^r\left(\mathbb{T}^2\right)$, then 
	\begin{equation*}
		\|[\Lambda, \phi] g\|_{H^r(\mathbb{T}^2)} \leq C(r,\phi) \|g\|_{H^r(\mathbb{T}^2)}.
	\end{equation*}
\end{lemma}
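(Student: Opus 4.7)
My plan is to prove the estimate by direct Fourier analysis on $\mathbb{T}^2$, exploiting that $\phi\in H^\infty(\mathbb{T}^2)$ has Fourier coefficients $\hat\phi(m)$ decaying faster than any polynomial in $|m|$. First, expand $\phi = \sum_{m\in\mathbb{Z}^2}\hat\phi(m)\, e^{2\pi i m\cdot x}$ in Fourier series, so that by linearity
$$ [\Lambda, \phi]g \;=\; \sum_{m\in\mathbb{Z}^2}\hat\phi(m)\,[\Lambda,\, e^{2\pi i m\cdot}]g. $$
It therefore suffices to bound each elementary commutator $[\Lambda, e^{2\pi i m\cdot}]g$ in $H^r$ with explicit dependence on $m$, and then sum with the rapidly-decaying weights $|\hat\phi(m)|$.

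The elementary commutator is computed directly on the Fourier side: for every $n\in\mathbb{Z}^2\setminus\{0\}$,
$$ \mathcal{F}_{\mathbb{T}^2}\!\left([\Lambda, e^{2\pi i m\cdot}]g\right)(n) \;=\; \bigl(|2\pi n| - |2\pi(n-m)|\bigr)\,\hat g(n-m). $$
Applying the triangle inequality to the multiplier yields the crucial gain of a derivative,
$$ \bigl||2\pi n| - |2\pi(n-m)|\bigr| \;\leq\; 2\pi|m|, $$
so the order-one operator $\Lambda$ is effectively replaced by the ``frequency of $\phi$'' $|m|$. I would then transfer the homogeneous $H^r$-weight $|2\pi n|^{2r}$ from $n$ to $n-m$ via a Peetre-type inequality, valid on the zero-mean torus since $|n|\geq 1$:
$$ |2\pi n|^{2r} \;\leq\; C_r\,(1+|m|)^{2|r|}\,|2\pi(n-m)|^{2r},\quad r\in\mathbb{R}. $$
Combining these two ingredients and applying Minkowski's inequality produces the clean elementary bound
$$ \|[\Lambda, e^{2\pi i m\cdot}]g\|_{H^r} \;\leq\; C_r\,|m|\,(1+|m|)^{|r|}\,\|g\|_{H^r}. $$

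Assembling this into the Fourier expansion of $\phi$ gives
$$ \|[\Lambda, \phi]g\|_{H^r} \;\leq\; C_r\,\|g\|_{H^r}\sum_{m\in\mathbb{Z}^2}|\hat\phi(m)|\,|m|\,(1+|m|)^{|r|}, $$
and the last sum is finite because $\phi\in H^\infty(\mathbb{T}^2)$ forces super-polynomial decay of $\hat\phi$; the constant $C(r,\phi)$ depends only on $r$ and on a sufficiently high Sobolev norm of $\phi$. I do not anticipate a substantial obstacle: the argument reduces to elementary Fourier analysis once the commutator structure is exposed, with the essential input being merely the triangle inequality applied to $|2\pi n|-|2\pi(n-m)|$. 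The only minor point requiring care is the justification of $[\Lambda, e^{2\pi i m\cdot}]g$ on negative-index spaces, which can be handled by density of smooth functions in $H^r$ together with the uniform bound above. Note that this argument actually works for arbitrary $r\in\mathbb{R}$, so the restriction $r\in(-1,1)$ in the statement reflects the range relevant to the SQG nonlinear term rather than a true technical limitation of the estimate itself.
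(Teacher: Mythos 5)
Your proof is correct, and it takes a genuinely different route from the paper: the paper does not prove the lemma at all but cites the Calder\'on-commutator-type estimate of Marchand (\cite[Lemma 2.2]{Mar08_existence}, transferred to $\mathbb{T}^2$ via \cite{ZZ14} and \cite{BSV19_Nonuniqueness_SQG}), which is proved by paraproduct/singular-integral techniques and holds under much weaker hypotheses on $\phi$ (essentially Lipschitz regularity), for which the restriction $r\in(-1,1)$ is genuinely needed. Your argument instead exploits the full smoothness $\phi\in H^\infty(\mathbb{T}^2)$: the Fourier expansion of $\phi$, the exact multiplier identity $\bigl(|2\pi n|-|2\pi(n-m)|\bigr)\hat g(n-m)$, the triangle inequality, and a Peetre weight transfer give the elementary bound $\|[\Lambda,e^{2\pi i m\cdot}]g\|_{H^r}\leq C_r |m|(1+|m|)^{|r|}\|g\|_{H^r}$, after which the rapid decay of $\hat\phi$ closes the sum. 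What you lose is sharpness of the constant --- your $C(r,\phi)$ is controlled by $\|\phi\|_{H^{s}}$ for $s>2+|r|$, whereas the cited estimate needs only Lipschitz-type control of $\phi$ (and the paper's claim that $C(r,\phi)\lesssim C(\iota)\|\phi\|_{H^{3+r+\iota}}$ refers to that sharper version); what you gain is a short self-contained proof valid for all $r\in\mathbb{R}$, which is entirely sufficient here since the lemma is only ever applied with $\phi=\partial_i\varphi$ for $\varphi\in H^\infty(\mathbb{T}^2)$. Two minor points deserve a sentence in a written-up version: (i) the Peetre inequality in the form you state uses $|n|\geq1$ and $|n-m|\geq1$, so the sums should be restricted to nonzero frequencies --- this is consistent with the paper's convention that $H^r$ for $r<0$ is the dual of the zero-mean space $H^{-r}$, so the zero mode of the commutator (which is generically nonzero) is invisible; (ii) the interchange of $[\Lambda,\cdot]g$ with the Fourier series of $\phi$ is justified by the uniform elementary bound together with convergence of the partial sums of $\phi$ in every $H^s$.
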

This commutator estimate is stated for $\mathbb{R}^2$ functions in \cite[Lemma 2.2]{Mar08_existence}. The same proof also works in the case of $\mathbb{T}^2$ functions, see  \cite[Lemma 4.2]{ZZ14}, \cite[Lemma A.5]{BSV19_Nonuniqueness_SQG}.

It follows from the above two lemmas that for every $f$ in  $H^{-1 / 2}\left(\mathbb{T}^2\right)$,  $\nabla \cdot\left(f R^{\perp}(f)\right)$ can be defined as a distribution by
\begin{equation}\label{decomposition}
	\nabla \cdot\left(f R^{\perp}(f)\right)=\nabla \cdot\left(f R^{\perp}\left(S_j f\right)\right)+\nabla \cdot\left(S_j f R^{\perp}\left(H_j f\right)\right)+\nabla \cdot\left(H_j f R^{\perp}\left(H_j f\right)\right),
\end{equation}
for arbitrary $j \in \mathbb{Z}$. And $\nabla \cdot\left(H_j f R^{\perp}\left(H_j f\right)\right)$ is the distribution defined by 
\begin{equation}\label{tempered distribution}
	\begin{aligned}
		\left\langle\nabla \cdot\left(H_j f R^{\perp}\left(H_j f\right)\right), \phi\right\rangle :=&-{}	_{H^{-1/2}}\big\langle \frac{1}{2}R_2\left(H_j f\right),\left[\Lambda, \partial_1 \phi\right]\Lambda^{-1}\left(H_j f\right)\big\rangle_{H^{1/2}} \\&+{}_{H^{-1/2}}\big\langle \frac{1}{2}R_1\left(H_j f\right),\left[\Lambda, \partial_2 \phi\right]\Lambda^{-1}\left(H_j f\right)\big\rangle_{H^{1/2}}
	\end{aligned}
\end{equation} for all $\phi \in H^\infty\left(\mathbb{T}^2\right)$.  More precisely, $\nabla \cdot\left(f R^{\perp}\left(S_j f\right)\right)$ and $\nabla \cdot\left(S_j f R^{\perp}\left(H_j f\right)\right)$ are well-defined since $ S_jf  \in  L^\infty(\mathbb{T}^2)$.  For the high-frequency part $ \nabla \cdot\left(H_j f R^{\perp}\left(H_j f\right)\right) $, it follows from Lemma \ref{lemma-calderon} that,  for any $ r\in (-1,1/2] $, $j\in\mathbb{Z}$, 
\begin{equation}\label{high-frequency}
		\left|\langle\nabla \cdot(H_j f R^{\perp}(H_j f)), \phi\rangle \right| \leq C(r,\phi)\|H_j f \|_{H^{-r}(\mathbb{T}^2)}\|H_j f \|_{H^{r-1}(\mathbb{T}^2)}\leq C(r,\phi)\|H_j f \|_{H^{-r}(\mathbb{T}^2)}^2.
\end{equation} 
Hence \eqref{tempered distribution} is well-defined. More concretely, $ C(r,\phi)$ can be controlled by $C(\iota)\|\phi\|_{H^{3+r+\iota}(\mathbb{T}^2)} $ for some $ \iota>0. $ The definition \eqref{decomposition} is  independent of the choice of $ j\in\mathbb{Z}$.

\subsection{The Regularization of Noise}

Recall that $ \{e_k\}_{k\in\mathbb{Z}^2\backslash\{0\}} $ is an orthonormal basis of $ H$.
A cylindrical Wiener process $ W $ on $ H $  has the following representation,
$$
W(t)=\sum_{k \in \mathbb{Z}^2\backslash\{0\}} \beta_k(t) e_k, \quad t \in[0, T],
$$
where $\{\beta_k\}_{k\in\mathbb{Z}^2\backslash\{0\}}$ is a family of independent real-valued Brownian motions. 
For every $\delta>0$ and fixed $s>\alpha+1$, we define the regularizing operator $ Q_{\delta}$ as 
$Q_{\delta}:=(I+\delta(-\Delta)^{s})^{-1},$
then
\begin{align*}
	\sqrt{Q_{\delta}}e_k=(I+\delta|k|^{2s})^{-1/2}e_k=:\lambda_{\delta,k}e_k, \quad k\in \mathbb{Z}^2\backslash\{0\}.
\end{align*}
For a bounded operator $ T: H\rightarrow H $, the Hilbert-Schmidt norm of $ T $ is defined as 
$$	\|T\|_{HS}^2:=\sum_{k \in \mathbb{Z}^2 \backslash \{0\}}\|Te_k\|_H^2.$$
A direct computation shows that
\begin{equation*}
	\begin{aligned}
		\left\|\Lambda^{\alpha} \circ \sqrt{Q_\delta}\right\|_{HS}^2&=
		\sum_{k \in \mathbb{Z}^2 \backslash \{0\}}|k|^{2\alpha}\lambda_{\delta, k}^2 =\sum_{k \in \mathbb{Z}^2 \backslash \{0\}} \frac{|k|^{2\alpha}}{1+\delta|k|^{2 s}} \\&\lesssim \int_1^{\infty} \frac{r^{2\alpha+1}}{1+\delta r^{2s}} \,\mathrm{d}r\lesssim \delta^{-\frac{\alpha+1}{s}} \int_{\delta^{\frac{1}{2s}}}^{\infty} \frac{u^{2\alpha+1}}{1+u^{2 s}} \,\mathrm{d} u\lesssim \delta^{-\frac{\alpha+1}{s}}.
	\end{aligned}
\end{equation*} This indicates the blow-up speed of solutions of \eqref{SPDE-SQG} and is therefore related to the scaling regimes we will encounter later on. This lead us to require that the scaling regime \eqref{scaling-delta} holds for $(\varepsilon,\delta(\varepsilon))$ .
Similarly, we require that the scaling regime $(\varepsilon,m(\varepsilon))$ for the Galerkin sequence \eqref{Galerkin approximation equations-delta=0} satisfies
\begin{equation}\label{scaling-m}
	\lim_{\varepsilon\rightarrow0}\varepsilon m(\varepsilon)^{2+2\alpha}=0.
\end{equation}
In this context, we denote $ \xi=\mathrm{d}W(t)/\mathrm{d}t $ and $ \xi_{\delta(\varepsilon)}=\sqrt{Q_{\delta(\varepsilon)}}\mathrm{d}W(t)/\mathrm{d}t.$

\subsection{Basic results of large deviations} 

The following entropy method will be used in our proof of the lower bound.
\begin{lemma}\label{lemma-entropy-method}\cite[Lemma 7]{Mariani10},\cite[Lemma 3.2]{GHW24_LLNS}
	Let $E$ be a separable and Hausdorff space, $I$ be a function from $ E $ to $ [0,+\infty],$ and  $\{\mu_{\varepsilon}\}_{\varepsilon>0}$ be a family of probability measures on $ E $. Then $\{\mu_{\varepsilon}\}$ satisfies the large deviation lower bound with speed $\varepsilon^{-1}$ and rate function $I$ if and only if, for every $x\in E$, there exists a sequence of probability measures $\{\pi_{\varepsilon,x}\}_{\varepsilon>0}$ that satisfies the following two conditions.
	\item (i)  (Weak convergence)\quad $\pi_{\varepsilon,x}\rightarrow\delta_x$ weakly, as $ \varepsilon\rightarrow0$.
	\item  (ii) (The entropy inequality)
	\begin{equation}\label{entropy-leq-rate-function}
		\limsup_{\varepsilon\rightarrow0}\varepsilon{\rm{Ent}}(\pi_{\varepsilon,x}|\mu_{\varepsilon})\leq I(x).
	\end{equation}
\end{lemma}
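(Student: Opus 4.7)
The plan is to establish the two implications separately, in both cases leveraging the Donsker--Varadhan variational characterization of relative entropy; the sufficiency direction uses it to extract a lower bound on $\mu_{\varepsilon}(G)$ from a tilting measure $\pi_{\varepsilon,x}$, while the necessity direction uses conditioning of $\mu_{\varepsilon}$ on shrinking neighborhoods of $x$ to manufacture such a tilting measure.

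For the direction ($\Leftarrow$), I would start from the classical entropy inequality: for any probability measures $\pi, \mu$ on $E$ and any measurable set $A$ with $\pi(A)>0$,
\begin{equation*}
\log\mu(A) \;\ge\; -\frac{\mathrm{Ent}(\pi|\mu)+C}{\pi(A)},
\end{equation*}
which follows from the variational formula $\int\phi\,d\pi\le\log\int e^{\phi}d\mu+\mathrm{Ent}(\pi|\mu)$ applied with $\phi=\lambda\mathbf 1_A$ and a subsequent optimization in $\lambda$. Apply this with $\pi=\pi_{\varepsilon,x}$, $\mu=\mu_{\varepsilon}$, $A=G$, where $G$ is an arbitrary open set containing $x$. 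Multiplying by $\varepsilon$ and passing to $\liminf_{\varepsilon\to0}$, the Portmanteau direction of weak convergence (condition (i)) gives $\pi_{\varepsilon,x}(G)\to1$, while (ii) controls $\varepsilon\,\mathrm{Ent}(\pi_{\varepsilon,x}|\mu_{\varepsilon})$ by $I(x)+o(1)$; the term $\varepsilon C$ is negligible. This yields $\liminf_{\varepsilon\to0}\varepsilon\log\mu_{\varepsilon}(G)\ge -I(x)$. Since $x\in G$ is arbitrary, taking the supremum over $x\in G$ delivers the large deviations lower bound.

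For the direction ($\Rightarrow$), I would construct $\pi_{\varepsilon,x}$ by conditioning. If $I(x)=\infty$, essentially any sequence weakly converging to $\delta_x$ will do (using separability). Otherwise, fix a decreasing countable neighborhood base $\{U_n\}$ at $x$ and set
\begin{equation*}
\pi_{\varepsilon}^{(n)} := \frac{\mu_{\varepsilon}(\,\cdot\,\cap U_n)}{\mu_{\varepsilon}(U_n)}, \qquad \mathrm{Ent}(\pi_{\varepsilon}^{(n)}|\mu_{\varepsilon}) = -\log\mu_{\varepsilon}(U_n).
\end{equation*}
The assumed large deviations lower bound applied to the open set $U_n$, together with $\inf_{y\in U_n}I(y)\le I(x)$, gives $\limsup_{\varepsilon\to 0}\varepsilon\,\mathrm{Ent}(\pi_{\varepsilon}^{(n)}|\mu_{\varepsilon})\le I(x)$ for every fixed $n$. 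Moreover, since $\pi_{\varepsilon}^{(n)}$ is supported in $U_n$, which shrinks to $\{x\}$ as $n\to\infty$, the tilted family concentrates near $x$ for large $n$. A diagonal extraction then produces $n(\varepsilon)\to\infty$ slowly enough that $\pi_{\varepsilon,x}:=\pi_{\varepsilon}^{(n(\varepsilon))}$ both weakly converges to $\delta_x$ and satisfies the entropy bound (ii).

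The main technical obstacle, in my view, is the diagonal extraction in the necessity direction: one must simultaneously take $n(\varepsilon)\to\infty$ slowly enough that the entropy bound $-\varepsilon\log\mu_{\varepsilon}(U_{n(\varepsilon)})\le I(x)+o(1)$ remains uniform along the chosen subsequence, yet fast enough that the shrinking of $U_{n(\varepsilon)}$ forces weak convergence against every $f\in C_b(E)$. A secondary subtlety is the existence of a countable decreasing neighborhood base at $x$: this is automatic in metrizable (or first-countable) spaces, which covers all concrete applications in this paper, but in a general separable Hausdorff space the neighborhood base argument must be replaced by a net-based construction compatible with weak convergence against $C_b(E)$.
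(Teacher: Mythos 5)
The paper does not prove this lemma at all --- it is quoted verbatim from \cite[Lemma 7]{Mariani10} and \cite[Lemma 3.2]{GHW24_LLNS} --- so there is no in-paper argument to compare against; judged on its own, your proof is correct and is exactly the standard one behind those references: the sufficiency direction via the Jensen/Donsker--Varadhan entropy inequality $\log\mu(A)\ge \log\pi(A)-\bigl(\mathrm{Ent}(\pi|\mu)+e^{-1}\bigr)/\pi(A)$ combined with Portmanteau, and the necessity direction via conditioning on a shrinking neighborhood base plus a diagonal extraction. The two caveats you flag yourself are the right ones and are harmless here: the constant in your entropy inequality just needs to absorb the dropped $\log\pi(A)$ term (which vanishes after multiplying by $\varepsilon$ anyway), and first countability plus complete regularity (needed for Portmanteau and the neighborhood-base construction) hold for the space $\mathbb{X}_{\alpha,\beta}$ where the lemma is actually applied.
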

Let $ \mu_{\varepsilon,\delta(\varepsilon)}^{0}:=\mathcal{G}\left(0, \varepsilon Q_{\delta(\varepsilon)} / 2\right) $ be the Gaussian measure on $H^{\alpha-2\beta}(\mathbb{T}^2)$.  The following large deviations of initial data satisfying Assumption \ref{Gaussian-initial-data} is an application of Lemma \ref{lemma-entropy-method}, whose conditions are straightforward to verify.
\begin{proposition}\label{prop-ldp-initial}
	Assume that the scaling regime \eqref{scaling-delta} holds for $ \varepsilon, \delta(\varepsilon)>0, $ then $ \{\mu_{\varepsilon,\delta(\varepsilon)}^{0}\}_{\varepsilon>0} $ satisfy the large deviations on $ H^{\alpha-2\beta}_w(\mathbb{T}^2) $ with speed $ \varepsilon^{-1} $ and rate function $ \mathcal{I}_0(\phi)=\|\phi\|_{H^{\alpha-2\beta}(\mathbb{T}^2)}^2$.
\end{proposition}

\section{Existence and Exponential Tightness}\label{section-Tightness}
In this section, we will establish the existence of solutions of the regularized stochastic PDE \eqref{SPDE-SQG}
for every $\varepsilon,\delta(\varepsilon)>0$,  and the exponential tightness under the scaling regimes \eqref{scaling-delta} and \eqref{scaling-m}.

\begin{theorem}[Existence of stochastic generalized Leray solutions]\label{thm-existence-gamma}
	For any $\varepsilon, \delta(\varepsilon)>0,$  let  $\big(\tilde\Omega, \mathcal{\tilde F}, \mathbb{\tilde P}\big)$ be a probability space and let $\tilde{\theta}_0$ be an $H^{\alpha-2\beta}(\mathbb{T}^2)$-valued random variable on $\big(\tilde\Omega, \mathcal{\tilde F}, \mathbb{\tilde P}\big)$ satisfying Assumption \ref{Gaussian-initial-data}. 
	Then there exists  a stochastic generalized Leray solution $(\theta_{\varepsilon,\delta(\varepsilon)}, W)$ of \eqref{SPDE-SQG} on a new stochastic basis $\left(\Omega, \mathcal{F},\{\mathcal{F}_t\}_{t \in[0, T]}, \mathbb{P}\right)$, such that $\theta_{\varepsilon,\delta(\varepsilon)}(0)$ has the same law as  $\tilde{\theta}_0$ and is independent of $ W. $
\end{theorem}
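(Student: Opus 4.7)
The plan is to build the solution via a finite-dimensional Galerkin scheme, extract a limit via tightness and Skorokhod representation, and then pass to the limit in the nonlinearity using the commutator framework of Section 2. Fix $\varepsilon,\delta>0$. For each $m\in\mathbb N$, let $P_m$ denote the projection onto $\mathrm{span}\{e_k:|k|\leq m\}$ and consider
$$d\theta^m=-\Lambda^{2\alpha}\theta^m\,dt-P_m\bigl(R^\perp\theta^m\cdot\nabla\theta^m\bigr)\,dt+\sqrt\varepsilon\,P_m\Lambda^{2\beta}\sqrt{Q_\delta}\,dW,\qquad \theta^m(0)=P_m\tilde\theta_0,$$
on an enlargement of $(\tilde\Omega,\tilde{\mathcal F},\tilde{\mathbb P})$ carrying an independent cylindrical Wiener process $W$ independent of $\tilde\theta_0$. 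The drift is locally Lipschitz on the finite-dimensional space $P_mH$ and the diffusion is constant, so a unique strong solution exists up to explosion.

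The key a priori estimate comes from applying It\^o's formula to $\|\theta^m(t)\|_{H^{\alpha-2\beta}}^2$. Under the parameter ranges \eqref{pama-range} one has $\alpha-2\beta\in\{0,-1/2\}$, and in both cases the structural cancellation $\langle R^\perp\theta\cdot\nabla\theta,\Lambda^{2(\alpha-2\beta)}\theta\rangle=0$ holds (for $\alpha-2\beta=0$ this comes from $\mathrm{div}\,R^\perp\theta=0$, while for $\alpha-2\beta=-1/2$ it is the Hamiltonian structure of the inviscid SQG). The nonlinearity therefore drops out of the energy identity, yielding
$$\tilde{\mathbb E}\|\theta^m(t)\|_{H^{\alpha-2\beta}}^2+2\tilde{\mathbb E}\int_0^t\|\theta^m(s)\|_{H^{2\alpha-2\beta}}^2\,ds\leq\tilde{\mathbb E}\|\tilde\theta_0\|_{H^{\alpha-2\beta}}^2+\varepsilon t\,\|\Lambda^\alpha\sqrt{Q_\delta}\|_{HS}^2,$$
which is uniform in $m$ by the Hilbert--Schmidt computation in Section 2. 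A Burkholder tail bound then rules out explosion and gives global strong solutions together with moment bounds in $L^\infty_t H^{\alpha-2\beta}\cap L^2_t H^{2\alpha-2\beta}$.

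The uniform spatial bound, combined with a fractional-in-time estimate derived from the equation (bounding the stochastic integral via Burkholder and the nonlinearity in some $H^{-N}$ via \eqref{high-frequency}), produces tightness of the laws of $(\theta^m,W)$ on
$$\mathcal{X}:=\Bigl(L^2([0,T];H^{2\alpha-2\beta-\eta}(\mathbb T^2))\cap C([0,T];H^{\alpha-2\beta}_w(\mathbb T^2))\Bigr)\times C([0,T];H^{-\kappa}(\mathbb T^2))$$
for any small $\eta,\kappa>0$, via an Aubin--Lions--Simon compactness lemma. Jakubowski's extension of the Skorokhod theorem produces a new stochastic basis $(\Omega,\mathcal F,\{\mathcal F_t\},\mathbb P)$ and random elements $\theta_{\varepsilon,\delta}$, $W$, together with a subsequence $(\tilde\theta^m,\tilde W^m)$ agreeing in law with the originals, such that $\tilde\theta^m\to\theta_{\varepsilon,\delta}$ and $\tilde W^m\to W$ almost surely in $\mathcal{X}$.

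The main obstacle is passing to the limit in the nonlinear term, since $\theta_{\varepsilon,\delta}$ may take values only in the negative-index space $H^{-1/2}$ when $\beta=\alpha/2+1/4$. For any test function $\varphi\in C^\infty([0,T];H^\infty(\mathbb T^2))$ and any dyadic level $j$, apply the Littlewood--Paley splitting \eqref{decomposition}. The two pieces involving a low-frequency factor $S_j\theta$ are continuous functionals of $\theta$ in the strong topology of $\mathcal{X}$ since $S_j$ maps any negative-order Sobolev space into $L^\infty(\mathbb T^2)$. The high-frequency piece is controlled uniformly in $m$ by \eqref{high-frequency} with $r=2\beta-\alpha\in\{0,1/2\}\subset(-1,1/2]$:
$$\bigl|\langle\nabla\cdot(H_j\tilde\theta^m R^\perp H_j\tilde\theta^m),\varphi\rangle\bigr|\leq C(\varphi)\|H_j\tilde\theta^m\|_{H^{\alpha-2\beta}}^2,$$
and the right-hand side tends to zero as $j\to\infty$ uniformly in $m$ thanks to the uniform $L^2_t H^{2\alpha-2\beta}$ bound and the spectral gain $\|H_j f\|_{H^{\alpha-2\beta}}\leq 2^{-j\alpha}\|f\|_{H^{2\alpha-2\beta}}$. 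A diagonal argument in $(j,m)$ then identifies the limit of the nonlinearity with $\nabla\cdot(\theta_{\varepsilon,\delta}R^\perp\theta_{\varepsilon,\delta})$ in the sense of \eqref{tempered distribution}, and a standard martingale characterisation identifies $W$ as the driving noise, verifying parts (i)--(ii) of Definition \ref{def-stochastic-leray}. The pathwise energy inequality \eqref{pathwise energy inequality} is obtained by passing to the limit in the It\^o equality satisfied by $\|\theta^m\|_{H^{\alpha-2\beta}}^2$, using weak lower semicontinuity of the Sobolev norms and convergence in probability of the martingale part. Finally, since $P_m\tilde\theta_0\to\tilde\theta_0$ a.s.\ in $H^{\alpha-2\beta}$, the initial value $\theta_{\varepsilon,\delta}(0)$ has the same law as $\tilde\theta_0$ and is independent of $W$ by construction.
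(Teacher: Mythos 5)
Your proposal is correct and follows essentially the same route as the paper: a Galerkin scheme with the structural cancellation of the nonlinearity in the $H^{\alpha-2\beta}$-energy identity (for both $\beta=\alpha/2$ and $\beta=\alpha/2+1/4$), uniform bounds via the Hilbert--Schmidt computation and Burkholder--Davis--Gundy, tightness plus Jakubowski--Skorokhod, Littlewood--Paley/commutator passage to the limit in the nonlinearity, and Fatou with lower semicontinuity of the Sobolev norms to recover the pathwise energy inequality. The only part you compress relative to the paper is the identification of the limiting martingale $M_1$ as the stochastic integral $\sqrt{\varepsilon}\int_0^t\langle\Lambda^{2\alpha-2\beta}\theta_{\varepsilon,\delta},\sqrt{Q_\delta}\,\mathrm{d}W\rangle$, which the paper delegates to \cite[Proposition 5.1]{GHW24_LLNS}; this is standard and does not affect the validity of your argument.
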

\begin{proof}
	The existence of a martingale solution is established in \cite[Theorem 4.5]{ZZ14},  following from a standard argument similar to \cite{FG95_Martingale_2dNS}. And the pathwise $ H^{\alpha-2\beta}$-energy inequality can be derived in a similar way as in the proof of \cite[Proposition 5.1]{GHW24_LLNS}. 
\end{proof}

For any $\varepsilon>0$ and $ m(\varepsilon)\in\mathbb{N}_{+}$, we denote $ H_{m(\varepsilon)}:=\mathrm{span}\{e_k\}_{0<|k|\leq m(\varepsilon)}$ and let $ P_{m(\varepsilon)} $ be the projection operator from $ H $ to $ H_{m(\varepsilon)} $.
Setting $ \delta=0$ and $ \theta_{m(\varepsilon),0}=P_{m(\varepsilon)}\theta_0\in H_{m(\varepsilon)}$, the Galerkin approximation equation of \eqref{SPDE-SQG} is given by
\begin{equation}\label{Galerkin approximation equations-delta=0}
	\begin{aligned}
		&\mathrm{d}\theta_{\varepsilon,m(\varepsilon)}(t)=-\Lambda^{2\alpha}\theta_{\varepsilon,m(\varepsilon)}\,\mathrm{d}t-P_{m(\varepsilon)}\left( u_{\theta_{\varepsilon,m(\varepsilon)}}\cdot\nabla\theta_{\varepsilon,m(\varepsilon)}\right) \,\mathrm{d}t+\varepsilon^{1/2}\Lambda^{2\beta}P_{m(\varepsilon)} \,\mathrm{d}W(t),\\ &\theta_{\varepsilon,m(\varepsilon)}(0)=\theta_{m(\varepsilon),0}.
	\end{aligned}
\end{equation} For any $\varepsilon>0$ and $ m(\varepsilon)\in\mathbb{N}_{+}$, \eqref{Galerkin approximation equations-delta=0} admits a pathwise unique  probabilistically strong solution $ \theta_{\varepsilon, m(\varepsilon)}$(\cite[Theorem 4.2]{RZZ15},\cite[Theorem 1.1]{DC06_weak_strong}).
For any $\varepsilon, \delta(\varepsilon)>0$, let $ \theta_{\varepsilon, \delta(\varepsilon)} $ be a stochastic generalized Leray solution of \eqref{SPDE-SQG}.
We denote $ \mu_{\varepsilon, \delta(\varepsilon)} $ and $ \mu_{\varepsilon, m(\varepsilon)} $ as the laws of $ \theta_{\varepsilon, \delta(\varepsilon)} $ and $ \theta_{\varepsilon, m(\varepsilon)}  $, respectively.
In the remainder of this section, we will show that $ \{\mu_{\varepsilon, \delta(\varepsilon)} \}$ and  $ \{\mu_{\varepsilon, m(\varepsilon)}\}  $ are exponentially tight in $ \mathbb{X}_{\alpha,\beta} $ under \eqref{scaling-delta} and \eqref{scaling-m} respectively.

The following lemma is aimed at obtaining an estimate in the space $ L^\infty([0,T];H^{\alpha-2\beta}(\mathbb{T}^2))\cap L^2([0,T];H^{2\alpha-2\beta}(\mathbb{T}^2))$. Based on this estimate, we will derive the estimates in $W^{\sigma,2}([0,T];H^{-l}(\mathbb{T}^2))$ and $C^{\sigma}([0,T];H^{-l}(\mathbb{T}^2))$, and then prove the exponential tightness. 

\begin{lemma}\label{exponential-estimate-lemma1}
	Let $ \varepsilon>0 $ and $ m(\varepsilon)\in \mathbb{N}_+$.  Suppose that $ \theta_{\varepsilon, m(\varepsilon)} $ is the solution of \eqref{Galerkin approximation equations-delta=0} with $ \theta_{\varepsilon, m(\varepsilon)}(0)\sim \mathcal{G}(0,\varepsilon P_{m(\varepsilon)}/2)$, then there exists $\eta_0>0$ such that for every  $ \eta \in\left[0, \eta_0\right]$, under the scaling regime \eqref{scaling-m}, we have 
	$$
	\varepsilon \log \mathbb{E} \exp \left\{\frac{\eta}{2\varepsilon}\sup _{t \in[0, T]} \left\| \theta_{\varepsilon, m(\varepsilon)}(t)\right\|_{H^{\alpha-2\beta}(\mathbb{T}^2)}^2
	+\frac{\eta}{\varepsilon} \int_0^T  \left\| \theta_{\varepsilon, m(\varepsilon)}(s)\right\|_{H^{2\alpha-2\beta}(\mathbb{T}^2)}^2 \,\mathrm{d}s\right\} \leqslant C\left(\eta, T\right),
	$$
	and
	$$
	\limsup _{R \rightarrow \infty} \sup _{\varepsilon \in(0,1)} \varepsilon \log \mathbb{P}\big(\sup _{t \in[0, T]} \left\| \theta_{\varepsilon, m(\varepsilon)}(t)\right\|_{H^{\alpha-2\beta}(\mathbb{T}^2)}^2
	+\int_0^T \left\| \theta_{\varepsilon, m(\varepsilon)}(s)\right\|_{H^{2\alpha-2\beta}(\mathbb{T}^2)}^2 \,\mathrm{d}s\geqslant R\big)=-\infty .
	$$ 
\end{lemma}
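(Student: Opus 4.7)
The plan is to exploit the fact that in the finite-dimensional Galerkin setting an exact $H^{\alpha-2\beta}$-energy identity holds, not just the inequality \eqref{pathwise energy inequality}, because the nonlinear projection $P_m(u_{\theta_{\varepsilon,m}}\cdot\nabla\theta_{\varepsilon,m})$ cancels when tested against $\Lambda^{2(\alpha-2\beta)}\theta_{\varepsilon,m}$ in both parameter cases (the same cancellations $\langle u_\theta\cdot\nabla\theta,\theta\rangle=0$ for $\beta=\alpha/2$ and $\langle u_\theta\cdot\nabla\theta,\Lambda^{-1}\theta\rangle=0$ for $\beta=\alpha/2+1/4$ already used in the proof of Theorem \ref{thm-existence-gamma}). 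Setting $X(t):=\|\theta_{\varepsilon,m}(t)\|_{H^{\alpha-2\beta}(\mathbb{T}^2)}^2$, $Y(t):=\int_0^t\|\theta_{\varepsilon,m}(s)\|_{H^{2\alpha-2\beta}(\mathbb{T}^2)}^2\,\mathrm{d}s$, $C_m:=\|\Lambda^\alpha\circ P_m\|_{HS}^2$ and $M_m(t):=\sqrt{\varepsilon}\int_0^t\langle\Lambda^{2\alpha-2\beta}\theta_{\varepsilon,m}(s),P_m\,\mathrm{d}W(s)\rangle$, the finite-dimensional It\^o formula gives the pathwise identity
\begin{equation*}
\tfrac{1}{2}X(t)+Y(t)=\tfrac{1}{2}X(0)+M_m(t)+\tfrac{\varepsilon C_m t}{2},\qquad \langle M_m\rangle_t=\varepsilon Y(t),
\end{equation*}
and the count $C_m=\sum_{0<|k|\leq m}|2\pi k|^{2\alpha}\lesssim m^{2+2\alpha}$ together with \eqref{scaling-m} forces $\varepsilon C_m\to 0$.

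The next step is to introduce the true exponential martingale $\mathcal{E}_\lambda(t):=\exp\{\lambda M_m(t)-\tfrac{\lambda^2}{2}\langle M_m\rangle_t\}$, which is a genuine martingale since $M_m$ evolves in the finite-dimensional subspace $H_m$. Substituting the energy identity and taking $\lambda=\eta/\varepsilon$ with $\eta\in(0,1]$ yields
\begin{equation*}
\mathcal{E}_{\eta/\varepsilon}(t)=\exp\Bigl\{\tfrac{\eta}{2\varepsilon}X(t)+\tfrac{\eta(2-\eta)}{2\varepsilon}Y(t)-\tfrac{\eta}{2\varepsilon}X(0)-\tfrac{\eta C_m t}{2}\Bigr\}\geq\exp\Bigl\{\tfrac{\eta}{2\varepsilon}(X(t)+Y(t))-\tfrac{\eta}{2\varepsilon}X(0)-\tfrac{\eta C_m T}{2}\Bigr\},
\end{equation*}
where the inequality uses $\eta(2-\eta)\geq \eta$ for $\eta\in(0,1]$. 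Applying optional stopping to $\mathcal{E}_{\eta/\varepsilon}$ at $\tau_R:=\inf\{t\in[0,T]:X(t)+Y(t)\geq R\}\wedge T$ and conditioning on $\theta_{\varepsilon,m}(0)$ (which is independent of $W$) delivers the conditional tail bound
\begin{equation*}
\mathbb{P}\bigl(\textstyle\sup_{t\leq T}(X(t)+Y(t))\geq R\,\big|\,X(0)\bigr)\leq\exp\Bigl\{-\tfrac{\eta(R-X(0))}{2\varepsilon}+\tfrac{\eta C_m T}{2}\Bigr\}.
\end{equation*}

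Integrating this tail via the layer-cake formula with a slightly smaller exponent $\eta'<\eta$ (needed for convergence of the resulting $\int s^{-\eta/\eta'}\,\mathrm{d}s$) produces $\mathbb{E}\bigl[\exp\{\tfrac{\eta'}{2\varepsilon}\sup_t(X+Y)\}\,\big|\,X(0)\bigr]\leq C(\eta,\eta',T)\exp\{\tfrac{\eta'}{2\varepsilon}X(0)\}$, and it remains to take the expectation over the Gaussian initial datum. Under Assumption \ref{Gaussian-initial-data}, $X(0)=\tfrac{\varepsilon}{2}\sum_{0<|k|\leq m}|2\pi k|^{2(\alpha-2\beta)}\gamma_k^2$ with iid $\gamma_k\sim\mathcal{N}(0,1)$, so for $\eta'$ small enough that every factor below is positive one has
\begin{equation*}
\mathbb{E}\exp\bigl\{\tfrac{\eta'}{2\varepsilon}X(0)\bigr\}=\prod_{0<|k|\leq m}\bigl(1-\tfrac{\eta'}{2}|2\pi k|^{2(\alpha-2\beta)}\bigr)^{-1/2},
\end{equation*}
and taking $\varepsilon\log$ bounds this by $C\varepsilon m^2$ when $\beta=\alpha/2$ (exponent $0$) and by $C\varepsilon m$ when $\beta=\alpha/2+1/4$ (exponent $-1$); in both situations the scaling regime \eqref{scaling-m} (together with $\alpha\geq 1/2$ in the second case) sends these to $0$. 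Combining with the monotonicity inequality $\sup_t X(t)+Y(T)\leq 2\sup_t(X(t)+Y(t))$ (since $Y$ is non-decreasing and non-negative) and choosing any $\eta_0<1/2$ yields the first assertion; the second follows at once by Chebyshev's exponential inequality applied to the first. The main technical obstacle is to balance the Gaussian-MGF restriction on $\eta'$ against the dimension growth through \eqref{scaling-m}, especially in the case $\beta=\alpha/2$ where each mode contributes a non-decaying factor, making the $m^2$ dimension count the sharpest term.
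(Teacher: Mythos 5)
Your proof is correct, but it runs along a genuinely different track from the paper's. The paper works directly with exponential moments: it exponentiates the It\^o identity, applies Doob's maximal inequality to the exponential of the stochastic integral, separates the initial-data factor by H\"older, and then closes a self-referential estimate in which $\mathbb{E}\exp\{\tfrac{\eta}{\varepsilon}\int_0^T\|\theta_{\varepsilon,m}\|_{H^{2\alpha-2\beta}}^2\}$ appears on both sides with different powers (via the supermartingale bound \eqref{exponential-of-martingale-term}), which is where the loss from $\eta$ to $\eta/2$ in the statement comes from. You instead substitute the exact energy identity into the Dol\'eans exponential $\mathcal{E}_{\eta/\varepsilon}$, use optional stopping at $\tau_R$ to extract a clean conditional tail bound for $\sup_t(X(t)+Y(t))$, and recover the exponential moment by layer-cake integration; the Gaussian initial datum and the correction term $\varepsilon C_m$ are then killed by the same scaling-regime computation as in the paper's \eqref{exp-estimate-0}. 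Your route buys an explicit tail estimate from which both assertions follow at once, and it avoids the paper's bootstrap (which, as written, implicitly needs an a priori finiteness or localization to be closed); the paper's route has the advantage of transferring verbatim to the infinite-dimensional solutions $\theta_{\varepsilon,\delta}$, where only the pathwise energy \emph{inequality} is available --- though your argument also survives there, since the inequality still gives $M_m(t)\geq\tfrac12X(t)+Y(t)-\tfrac12X(0)-\tfrac{\varepsilon C_mt}{2}$ and $\langle M_m\rangle_t=\varepsilon Y(t)$ holds exactly. Two cosmetic points: after the layer-cake step the coefficient multiplying $X(0)$ in the exponential should be $\eta$ (inherited from the tail bound), not $\eta'$, which only changes which smallness condition you impose on $\eta_0$; and you do not need $\mathcal{E}_{\eta/\varepsilon}$ to be a true martingale on $[0,T]$ --- for the stopped bound $\mathbb{E}[\mathcal{E}_{\eta/\varepsilon}(\tau_R)\,|\,X(0)]\leq 1$ it suffices that it is a nonnegative local martingale (or note that Novikov holds on $[0,\tau_R]$ because $\langle M_m\rangle_{\tau_R}\leq\varepsilon R$).
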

\begin{proof}
	Applying It\^o's formula  to $ \eqref{Galerkin approximation equations-delta=0},$ for any $ t\in[0,T], $
	\begin{equation}\label{ito-H}
		\begin{aligned}
			\frac{1}{2}\left\| \theta_{\varepsilon, m(\varepsilon)}(t)\right\|_{H^{\alpha-2\beta}(\mathbb{T}^2)}^2
			&=\frac{1}{2}\left\| \theta_{\varepsilon, m(\varepsilon)}(0)\right\|_{H^{\alpha-2\beta}(\mathbb{T}^2)}^2 -\int_0^t  \left\| \theta_{\varepsilon, m(\varepsilon)}(s)\right\|_{H^{2\alpha-2\beta}(\mathbb{T}^2)}^2 \,\mathrm{d}  s \\
			& +\varepsilon^{1 / 2} \int_0^t\left\langle \Lambda^{2\alpha-2\beta}\theta_{\varepsilon, m(\varepsilon)}(s), P_{m(\varepsilon)} \mathrm{d} W(s)\right\rangle +\frac{\varepsilon}{2}t\left\|P_{m(\varepsilon)}\Lambda^{\alpha}\right\|_{HS}^2.
		\end{aligned}
	\end{equation}
	It follows from \eqref{ito-H}, Burkholder-Davis-Gundy inequality, and the scaling regime \eqref{scaling-m}  that
	\begin{equation}\label{L^2-Ito-estimate-of-SDE}
		\begin{aligned}
			\frac{1}{2}	\mathbb{E}\sup _{t \in[0, T]}\left\| \theta_{\varepsilon, m(\varepsilon)}(t)\right\|_{H^{\alpha-2\beta}(\mathbb{T}^2)}^2 
			&+\mathbb{E}\int_0^T \left\| \theta_{\varepsilon, m(\varepsilon)}(s)\right\|_{H^{2\alpha-2\beta}(\mathbb{T}^2)}^2   \,\mathrm{d} s\\&\leqslant\frac{1}{2}\mathbb{E}\left\| \theta_{\varepsilon, m(\varepsilon)}(0)\right\|_{H^{\alpha-2\beta}(\mathbb{T}^2)}^2  +CT.
		\end{aligned}
	\end{equation}
	For any $ \eta>0 $, the process $ \exp\left\{\frac{\eta}{\varepsilon^{1 / 2}}\int_0^t\left\langle \Lambda^{2\alpha-2\beta}\theta_{\varepsilon, m(\varepsilon)}(s), P_{m(\varepsilon)} \mathrm{d} W(s)\right\rangle\right\} $ is a submartingale. By Doob's maximal inequality, we see that \begin{equation}\label{Doob-ineq}
		\begin{aligned}
			\mathbb{E}\left[\sup_ { t \in [ 0 , T ] }\right. &\left.  \exp\left\{\frac{2\eta}{\varepsilon^{1 / 2}} \int_0^t\left\langle \Lambda^{2\alpha-2\beta}\theta_{\varepsilon, m(\varepsilon)}(s), P_{m(\varepsilon)} \mathrm{d} W(s)\right\rangle\right\}\right]\\&\leqslant 4	\mathbb{E}\left[\exp\left\{\frac{2\eta}{\varepsilon^{1 / 2}} \int_0^T\left\langle \Lambda^{2\alpha-2\beta}\theta_{\varepsilon, m(\varepsilon)}(s), P_{m(\varepsilon)} \mathrm{d} W(s)\right\rangle \right\}\right].
		\end{aligned}
	\end{equation}
	For any $\varepsilon$ and $\eta>0$, taking the exponential function of both sides of \eqref{ito-H} and using \eqref{scaling-m} yield that 
	\begin{equation}\label{exponential-of-L^2}
		\begin{aligned}
			& \exp \left\{\frac{\eta}{2\varepsilon} \left\| \theta_{\varepsilon, m(\varepsilon)}(t)\right\|_{H^{\alpha-2\beta}(\mathbb{T}^2)}^2 \right\} \cdot\exp \left\{\frac{\eta}{\varepsilon}\int_{0}^t  \left\| \theta_{\varepsilon, m(\varepsilon)}(s)\right\|_{H^{2\alpha-2\beta}(\mathbb{T}^2)}^2 \,\mathrm{d} s\right\}  \\
			\leqslant & \exp\big\{\frac{C(\eta)}{\varepsilon}T\big\}	\cdot	\exp \left\{\frac{\eta}{2\varepsilon} \left\| \theta_{\varepsilon, m(\varepsilon)}(0)\right\|_{H^{\alpha-2\beta}(\mathbb{T}^2)}^2 \right\} \cdot \exp \left\{\frac{\eta}{\varepsilon^{1 / 2}} \int_0^t\left\langle \Lambda^{2\alpha-2\beta}\theta_{\varepsilon, m(\varepsilon)}(s), P_{m(\varepsilon)}\mathrm{d} W(s)\right\rangle\right\}.  
		\end{aligned}
	\end{equation}
	For any $ \eta\in (0,1/2) $, according to the scaling regime \eqref{scaling-m}, the initial data part can be calculated by
	\begin{equation}\label{exp-estimate-0}
		\begin{aligned}
			\quad\varepsilon\log\mathbb{E}\Big(\exp\Big\{\frac{\eta}{\varepsilon}\left\| \theta_{\varepsilon, m(\varepsilon)}(0)\right\|_{H^{\alpha-2\beta}(\mathbb{T}^2)}^2\Big\}\Big)&=\sum_{0<|k|\leqslant m(\varepsilon)}\varepsilon\log\mathbb{E}\Big(\exp\Big\{\frac{\eta}{\varepsilon}|k|^{2\alpha-4\beta}\langle e_k, \theta_{\varepsilon, m(\varepsilon)}(0)\rangle^2\Big\}\Big)\\&=\sum_{0<|k|\leqslant m(\varepsilon)}\varepsilon\log(1-\eta)^{-1/2}\leqslant\varepsilon\sum_{0<|k| \leqslant m(\varepsilon)}1\leqslant C(\eta). 	
		\end{aligned}
	\end{equation}
	Taking supremum and expectation  to \eqref{exponential-of-L^2}, combining with H\"older's inequality, \eqref{Doob-ineq}, and  \eqref{exp-estimate-0}, we obtain
	\begin{equation}\label{Expection-of-exponential-of-L^2}
		\begin{aligned}
			&\mathbb{E} \exp \left\{\frac{\eta}{2\varepsilon} \sup _{t \in[0, T]}\left\| \theta_{\varepsilon, m(\varepsilon)}(t)\right\|_{H^{\alpha-2\beta}(\mathbb{T}^2)}^2
			+\frac{\eta}{ \varepsilon} \int_0^T \left\| \theta_{\varepsilon, m(\varepsilon)}(s)\right\|_{H^{2\alpha-2\beta}(\mathbb{T}^2)}^2 \,\mathrm{d}s\right\} \\
			=&\mathbb{E} \left[ \sup _{t \in[0, T]}\exp \left\{\frac{\eta}{2\varepsilon} \left\| \theta_{\varepsilon, m(\varepsilon)}(t)\right\|_{H^{\alpha-2\beta}(\mathbb{T}^2)}^2
			+\frac{\eta}{\varepsilon} \int_0^T \left\| \theta_{\varepsilon, m(\varepsilon)}(s)\right\|_{H^{2\alpha-2\beta}(\mathbb{T}^2)}^2\,\mathrm{d}s\right\} \right] \\
			\leqslant& \exp \left\{\frac{C(\eta)}{\varepsilon} T\right\} \mathbb{E} \exp\bigg[ \left\{\frac{\eta}{2\varepsilon} \left\| \theta_{\varepsilon, m(\varepsilon)}(0)\right\|_{H^{\alpha-2\beta}(\mathbb{T}^2)}^2 \right\} \\\cdot& \sup_{ t \in [ 0 , T ] }\left( \exp\left\{\frac{\eta}{\varepsilon^{1 / 2}} \int_0^t\left\langle \Lambda^{2\alpha-2\beta}\theta_{\varepsilon, m(\varepsilon)}(s), P_{m(\varepsilon)} \mathrm{d} W(s)\right\rangle\right\} \right) \bigg] \\
			\lesssim& \exp \left\{\frac{C(\eta)}{\varepsilon} T\right\} \left[ \mathbb{E} \exp\left\{\frac{\eta}{\varepsilon} \left\| \theta_{\varepsilon, m(\varepsilon)}(0)\right\|_{H^{\alpha-2\beta}(\mathbb{T}^2)}^2 \right\}  \right]^{1/2}\\ \cdot&\left[ \mathbb{E} \sup_{ t \in [ 0 , T ] }\exp \left\{\frac{2\eta}{\varepsilon^{1 / 2}} \int_0^t\left\langle \Lambda^{2\alpha-2\beta}\theta_{\varepsilon, m(\varepsilon)}(s), P_{m(\varepsilon)} \mathrm{d} W(s)\right\rangle\right\} \right] ^{1/2}\\ \lesssim&
			\exp \left\{\frac{C(\eta)}{\varepsilon} T\right\} 	\left[\mathbb{E}\exp\left\{\frac{2\eta}{\varepsilon^{1 / 2}} \int_0^T\left\langle \Lambda^{2\alpha-2\beta}\theta_{\varepsilon, m(\varepsilon)}(s), P_{m(\varepsilon)} \mathrm{d} W(s)\right\rangle\right\}\right]^{1/2}\end{aligned}
	\end{equation}
	By the covariance structure of Brownian motions and the definition of the quadratic variation process, it holds that
	$$
	\left\langle\left\langle\frac{4\eta}{\varepsilon^{1 / 2}} \int_0^{\cdot}\left\langle \Lambda^{2\alpha-2\beta}\theta_{\varepsilon, m(\varepsilon)}(s), P_{m(\varepsilon)} \mathrm{d} W(s)\right\rangle\right\rangle\right\rangle_t 
	=  \frac{16\eta^2}{\varepsilon} \int_0^t  \left\| \theta_{\varepsilon, m(\varepsilon)}(s)\right\|_{H^{2\alpha-2\beta}(\mathbb{T}^2)}^2 \,\mathrm{d}s .
	$$
	Thus $\exp \left\{\frac{4\eta}{\varepsilon^{1 / 2}} \int_0^t\left\langle \Lambda^{2\alpha-2\beta}\theta_{\varepsilon, m(\varepsilon)}(s), P_{m(\varepsilon)} \mathrm{d} W(s)\right\rangle-\frac{8\eta^2}{ \varepsilon} \int_0^t  \left\| \theta_{\varepsilon, m(\varepsilon)}(s)\right\|_{H^{2\alpha-2\beta}(\mathbb{T}^2)}^2 \,\mathrm{d}s\right\}$ is a local martingale. Moreover, for any $ \eta \in (0, 1/8) $, 
	\begin{equation*}
		M_1(t):=\exp \left\{\frac{4\eta}{\varepsilon^{1 / 2}} \int_0^t\left\langle \Lambda^{2\alpha-2\beta}\theta_{\varepsilon, m(\varepsilon)}(s), P_{m(\varepsilon)} \mathrm{d} W(s)\right\rangle-\frac{\eta}{ \varepsilon} \int_0^t  \left\| \theta_{\varepsilon, m(\varepsilon)}(s)\right\|_{H^{2\alpha-2\beta}(\mathbb{T}^2)}^2 \,\mathrm{d}s\right\}
	\end{equation*}
	is a supermartingale. In combination with H\"older's inequality, we deduce that
	\begin{equation}\label{exponential-of-martingale-term}
		\begin{aligned}
			&\mathbb{E}\left[\exp\left\{\frac{2\eta}{\varepsilon^{1 / 2}} \int_0^T\left\langle \Lambda^{2\alpha-2\beta}\theta_{\varepsilon, m(\varepsilon)}(s), P_{m(\varepsilon)} \mathrm{d} W(s)\right\rangle\right\}\right]\\
			\lesssim& \left[\mathbb{E} \exp\left\lbrace  \frac{\eta}{\varepsilon}\int_0^T \left\| \theta_{\varepsilon, m(\varepsilon)}(s)\right\|_{H^{2\alpha-2\beta}(\mathbb{T}^2)}^2 \,\mathrm{d} s\right\rbrace \right]^{1/2}\\ 
			& \cdot\left[ \mathbb{E}\exp \left\{\frac{4\eta}{\varepsilon^{1 / 2}} \int_0^T\left\langle \Lambda^{2\alpha-2\beta}\theta_{\varepsilon, m(\varepsilon)}(s), P_{m(\varepsilon)} \mathrm{d} W(s)\right\rangle-\frac{\eta}{ \varepsilon} \int_0^T\ \left\| \theta_{\varepsilon, m(\varepsilon)}(s)\right\|_{H^{2\alpha-2\beta}(\mathbb{T}^2)}^2 \,\mathrm{d} s\right\} \right] ^{1/2}\\
			\lesssim&  \left[\mathbb{E} \exp\left\lbrace  \frac{\eta}{\varepsilon} \int_{0}^{T} \left\| \theta_{\varepsilon, m(\varepsilon)}(s)\right\|_{H^{2\alpha-2\beta}(\mathbb{T}^2)}^2  \,\mathrm{d} s\right\rbrace \right]^{1/2}.
		\end{aligned}
	\end{equation}
	Substituting \eqref{exponential-of-martingale-term} into 	\eqref{Expection-of-exponential-of-L^2}, it follows that $$\begin{aligned}
		&\quad\mathbb{E} \exp \left\{ \frac{\eta}{\varepsilon} \int_0^T  \left\| \theta_{\varepsilon, m(\varepsilon)}(s)\right\|_{H^{2\alpha-2\beta}(\mathbb{T}^2)}^2 \,\mathrm{d}s\right\}\\&\leqslant	\exp \left\{\frac{C(\eta)}{\varepsilon} T\right\} 	\left[\mathbb{E}\exp\left\{\frac{2\eta}{\varepsilon^{1 / 2}} \int_0^T\left\langle \Lambda^{2\alpha-2\beta}\theta_{\varepsilon, m(\varepsilon)}(s), P_{m(\varepsilon)} \mathrm{d} W(s)\right\rangle\right\}\right]^{1/2}\leqslant
		\exp \left\{\frac{C(\eta)}{\varepsilon} T\right\}.
	\end{aligned}$$In combination with \eqref{Expection-of-exponential-of-L^2}  and \eqref{exponential-of-martingale-term}, we conclude that	$$
	\mathbb{E} \exp \left\{\frac{\eta}{2\varepsilon} \sup _{t \in[0, T]} \left\| \theta_{\varepsilon, m(\varepsilon)}(t)\right\|_{H^{\alpha-2\beta}(\mathbb{T}^2)}^2+\frac{\eta}{ \varepsilon} \int_0^T  \left\| \theta_{\varepsilon, m(\varepsilon)}(s)\right\|_{H^{2\alpha-2\beta}(\mathbb{T}^2)}^2 \,\mathrm{d}s\right\} \leqslant
	\exp \left\{\frac{C(\eta)}{\varepsilon} T\right\}.
	$$
	Finally, according to the exponential Chebyshev's inequality,
	$$
	\begin{aligned}
		&\varepsilon \log \mathbb{P}\left(\frac{1}{2} \sup _{t \in[0, T]} \left\| \theta_{\varepsilon, m(\varepsilon)}(t)\right\|_{H^{\alpha-2\beta}(\mathbb{T}^2)}^2 + \int_0^T  \left\| \theta_{\varepsilon, m(\varepsilon)}(s)\right\|_{H^{2\alpha-2\beta}(\mathbb{T}^2)}^2 \,\mathrm{d}s \geqslant R\right) \\
		\leqslant & -\eta R+\varepsilon \log \mathbb{E}\left(\frac{\eta}{2\varepsilon} \sup _{t \in[0, T]} \left\| \theta_{\varepsilon, m(\varepsilon)}(t)\right\|_{H^{\alpha-2\beta}(\mathbb{T}^2)}^2 +\frac{\eta}{ \varepsilon} \int_0^T  \left\| \theta_{\varepsilon, m(\varepsilon)}(s)\right\|_{H^{2\alpha-2\beta}(\mathbb{T}^2)}^2 \,\mathrm{d} s\right) \\
		\leqslant & -\eta R+C(\eta)T .
	\end{aligned}
	$$
	We complete the proof by sending $R \rightarrow \infty$.
\end{proof}

\begin{remark}
	The proof of the exponential estimates for $ \theta_{\varepsilon,\delta(\varepsilon)} $ can be handled in much the same way under the scaling regime \eqref{scaling-delta} with initial data satisfying Assumption \ref{Gaussian-initial-data}. The only difference is that we need to replace the finite-dimensional It\^o's formula by the pathwise $H^{\alpha-2\beta}$-energy equality \eqref{pathwise energy inequality}. In contrast, we  prove the following exponential estimates for $ \theta_{\varepsilon,\delta(\varepsilon)} $ under the scaling regime \eqref{scaling-delta}. These results hold for  $\theta_{\varepsilon, m(\varepsilon)}$ under the scaling regime \eqref{scaling-m} by the same arguments. 
\end{remark}

\begin{lemma}\label{exponential-estimate-lemma2}
	Let $ \varepsilon, \delta(\varepsilon)>0$. Suppose that $\theta_{\varepsilon,\delta(\varepsilon)}$ is a stochastic generalized Leray solution of \eqref{SPDE-SQG} in the sense of Definition \ref{def-stochastic-leray}, with initial data $ \theta_{\varepsilon, \delta(\varepsilon)}(0) $ satisfying Assumption \ref{Gaussian-initial-data}. Fix $\sigma\in(0,1/2)$ and $l>3+2\beta-\alpha$. Then there exists $\eta_0>0$ such that for every  $\eta\in[0,\eta_0]$, under the scaling regime \eqref{scaling-delta}, we have 
	\begin{equation}\label{exponential-estimate-lemma2-1}
		\varepsilon\log\mathbb{E}\Big(\exp\Big\{\frac{\eta}{\varepsilon}\|\theta_{\varepsilon,\delta(\varepsilon)}\|_{W^{\sigma,2}([0,T];H^{-l}(\mathbb{T}^2))}^2\Big\}\Big)\leq C(\eta,T), 
	\end{equation}
	and
	\begin{equation}\label{exponential-estimate-lemma2-2}
		\limsup_{R\rightarrow\infty}\sup_{\varepsilon\in(0,1)}\varepsilon\log\mathbb{P}\Big(\|\theta_{\varepsilon,\delta(\varepsilon)}\|_{W^{\sigma,2}([0,T];H^{-l}(\mathbb{T}^2))}>R\Big)=-\infty.  
	\end{equation}
\end{lemma}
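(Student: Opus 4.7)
The plan is to split $\theta_{\varepsilon,\delta}$ into a drift part and a stochastic-convolution part, control each separately in $W^{\sigma,2}([0,T];H^{-l}(\mathbb{T}^2))$, and combine via H\"older's inequality. The drift is handled pathwise using the commutator estimate \eqref{high-frequency} together with the exponential estimates of Lemma \ref{exponential-estimate-lemma1}, while the stochastic convolution is treated as a Gaussian functional.

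For the decomposition, I would write, for $0\leq s<t\leq T$,
\[
\theta_{\varepsilon,\delta}(t)-\theta_{\varepsilon,\delta}(s)=-\int_s^t\Lambda^{2\alpha}\theta_{\varepsilon,\delta}(r)\,\mathrm{d}r-\int_s^t\nabla\cdot\bigl(\theta_{\varepsilon,\delta}R^\perp\theta_{\varepsilon,\delta}\bigr)(r)\,\mathrm{d}r+M_{\varepsilon,\delta}(t)-M_{\varepsilon,\delta}(s),
\]
as an identity in $H^{-l}$, where $M_{\varepsilon,\delta}(t):=\sqrt{\varepsilon}\int_0^t\Lambda^{2\beta}\sqrt{Q_\delta}\,\mathrm{d}W(r)$. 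Since $l>2\beta$, the dissipation term satisfies $\|\Lambda^{2\alpha}\theta\|_{H^{-l}}\lesssim\|\theta\|_{H^{2\alpha-2\beta}}$; and applying \eqref{high-frequency} with $r=2\beta-\alpha\in(-1,1/2]$ (valid in every regime of \eqref{pama-range}) and $\iota=l-3-2\beta+\alpha>0$ yields $\|\nabla\cdot(\theta R^\perp\theta)\|_{H^{-l}}\lesssim\|\theta\|_{H^{\alpha-2\beta}}^2$. By Cauchy-Schwarz in time,
\[
\|(\theta_{\varepsilon,\delta}-M_{\varepsilon,\delta})(t)-(\theta_{\varepsilon,\delta}-M_{\varepsilon,\delta})(s)\|_{H^{-l}}^2\lesssim|t-s|\int_0^T\|\theta_{\varepsilon,\delta}(r)\|_{H^{2\alpha-2\beta}}^2\,\mathrm{d}r+|t-s|^2\sup_{r\in[0,T]}\|\theta_{\varepsilon,\delta}(r)\|_{H^{\alpha-2\beta}}^4.
\]
Integrating this against $|t-s|^{-(1+2\sigma)}$ over $[0,T]^2$ converges because $\sigma<1/2$, so the drift contribution to $\|\theta_{\varepsilon,\delta}-M_{\varepsilon,\delta}\|_{W^{\sigma,2}([0,T];H^{-l})}^2$ is controlled by a polynomial of degree two in the quantities bounded exponentially in Lemma \ref{exponential-estimate-lemma1}. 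Shrinking $\eta_0$ if necessary and applying H\"older's inequality then produces the desired exponential estimate for this part.

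For the stochastic convolution $M_{\varepsilon,\delta}$, I would exploit its Gaussian nature. A direct computation of its covariance yields
\[
\mathbb{E}\|M_{\varepsilon,\delta}(t)-M_{\varepsilon,\delta}(s)\|_{H^{-l}}^2\lesssim\varepsilon(t-s)\sum_{k\in\mathbb{Z}^2\setminus\{0\}}\frac{|k|^{4\beta-2l}}{1+\delta|k|^{2s}}\lesssim\varepsilon(t-s),
\]
uniformly in $\delta$, since $4\beta-2l<-2$ (equivalent to $l>2\beta+1$, which follows from $l>3+2\beta-\alpha$ together with $\alpha<2$). Hence $\mathbb{E}\|M_{\varepsilon,\delta}\|_{W^{\sigma,2}([0,T];H^{-l})}^2\lesssim\varepsilon$ uniformly in $\delta$. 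Because $\|M_{\varepsilon,\delta}\|_{W^{\sigma,2}}^2$ is a non-negative quadratic functional of a Gaussian element in the separable Hilbert space $W^{\sigma,2}([0,T];H^{-l})$, and the covariance of $M_{\varepsilon,\delta}/\sqrt{\varepsilon}$ is dominated (as a trace-class operator) by that of the formal $\delta=0$ limit, Fernique's theorem applied to the latter yields $\mathbb{E}\exp\{(\eta/(2\varepsilon))\|M_{\varepsilon,\delta}\|_{W^{\sigma,2}}^2\}\leq C(\eta,T)$ for all sufficiently small $\eta>0$, uniformly in $\delta$. Combining this with the drift estimate via H\"older's inequality gives \eqref{exponential-estimate-lemma2-1}, and the super-exponential tightness \eqref{exponential-estimate-lemma2-2} then follows from the exponential Chebyshev inequality, exactly as at the end of the proof of Lemma \ref{exponential-estimate-lemma1}.

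The main technical obstacle, I anticipate, is making the commutator estimate fit precisely: the constraint $l>3+2\beta-\alpha$ is chosen so that $r=2\beta-\alpha$ is admissible in \eqref{high-frequency} and produces an $H^{-l}$-bound on the nonlinear term. The uniform-in-$\delta$ Fernique bound for the Gaussian part is a secondary subtlety, but it reduces cleanly to the uniform increment-variance bound computed above.
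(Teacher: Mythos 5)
Your overall architecture is essentially the paper's: the paper also estimates the $W^{\sigma,2}([0,T];H^{-l})$ norm term by term (initial datum, dissipation, nonlinearity, noise), bounds the nonlinear term by $\|\theta_{\varepsilon,\delta}\|_{H^{\alpha-2\beta}}^2$ via the Littlewood--Paley decomposition \eqref{decomposition} and the commutator bound \eqref{high-frequency} exactly as you do, treats the noise as a Gaussian element of $W^{\sigma,2}$ via Fernique's theorem, and concludes \eqref{exponential-estimate-lemma2-2} by exponential Chebyshev. The one structural difference is that the paper works with the \emph{first power} of the $W^{\sigma,2}$-norm in the exponent, $\varepsilon\log\mathbb{E}\exp\{\eta\varepsilon^{-1}\|\theta_{\varepsilon,\delta}\|_{W^{\sigma,2}}\}$, splitting it by convexity of $\exp$ and H\"older, whereas you work with the square. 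That difference is not cosmetic, and it is where your argument breaks.

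The gap is in your treatment of the drift contribution to the \emph{squared} seminorm. Your increment bound produces the term $|t-s|^2\sup_{r}\|\theta_{\varepsilon,\delta}(r)\|_{H^{\alpha-2\beta}}^4$, so after integration the squared $W^{\sigma,2}$-seminorm of the drift is controlled by $\big(\sup_t\|\theta_{\varepsilon,\delta}(t)\|_{H^{\alpha-2\beta}}^2\big)^2$. Lemma \ref{exponential-estimate-lemma1} only gives $\mathbb{E}\exp\{\eta\varepsilon^{-1}\sup_t\|\theta_{\varepsilon,\delta}(t)\|_{H^{\alpha-2\beta}}^2\}\leq e^{C/\varepsilon}$, and an exponential moment at scale $\varepsilon^{-1}$ of the \emph{square} of such a quantity is generically infinite for every $\eta>0$: if $X\ge 0$ has only an exponential tail $\mathbb{P}(X>R)\lesssim e^{-cR/\varepsilon}$, then $\mathbb{E}\exp\{\eta\varepsilon^{-1}X^2\}=\infty$ no matter how small $\eta$ is. So ``shrinking $\eta_0$ and applying H\"older'' does not close this step; the quartic term cannot be absorbed. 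The repair is to estimate the first power of the norm instead (which is what the paper's proof actually establishes, and which is all that is needed for \eqref{exponential-estimate-lemma2-2} and for Proposition \ref{prop-exponential-tightness}): taking square roots in your increment bound gives $\|\theta_{\varepsilon,\delta}-M_{\varepsilon,\delta}\|_{W^{\sigma,2}}\lesssim \big(\int_0^T\|\theta_{\varepsilon,\delta}\|_{H^{2\alpha-2\beta}}^2\,\mathrm{d}r\big)^{1/2}+\sup_t\|\theta_{\varepsilon,\delta}(t)\|_{H^{\alpha-2\beta}}^2$, which is linear in the quantities controlled by Lemma \ref{exponential-estimate-lemma1}. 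Your Gaussian part is fine as stated (the $\sqrt{\varepsilon}$ prefactor makes the squared version harmless there, and the covariance-domination argument for a uniform-in-$\delta$ Fernique bound can be justified by writing the larger Gaussian as an independent sum and applying Jensen), and your verification that $l>3+2\beta-\alpha$ makes $r=2\beta-\alpha$ admissible in \eqref{high-frequency} is correct.
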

\begin{proof}
	
	Taking the $ W^{\sigma,2}([0,T];H^{-l}(\mathbb{T}^2)) $ norm to $ \theta_{\varepsilon,\delta(\varepsilon)}(t)$, it follows from the convexity of the exponential function that 
	\begin{equation*}
		\begin{aligned}
			&\quad\varepsilon\log\mathbb{E}\exp\Big\{\frac{\eta}{\varepsilon}\|	\theta_{\varepsilon,\delta(\varepsilon)}\|_{W^{\sigma,2}([0,T];H^{-l}(\mathbb{T}^2))}\Big\}
			\\\lesssim& \varepsilon\log\mathbb{E}\exp\Big\{\frac{\eta}{\varepsilon}\|	\theta_{\varepsilon,\delta(\varepsilon)}(0)\|_{W^{\sigma,2}([0,T];H^{-l}(\mathbb{T}^2))}\Big\}+\varepsilon\log\mathbb{E}\exp\Big\{\frac{\eta}{\varepsilon}\Big\|\int^{\cdot}_0\Lambda^{2\alpha} \theta_{\varepsilon,\delta(\varepsilon)}\Big\|_{W^{\sigma,2}([0,T];H^{-l}(\mathbb{T}^2))}\Big\}\\&+\varepsilon\log\mathbb{E}\exp\Big\{\frac{\eta}{\varepsilon}\Big\|\int^{\cdot}_0\nabla \cdot\left(  \theta_{\varepsilon,\delta(\varepsilon)}R^\perp \theta_{\varepsilon,\delta(\varepsilon)}\right)  \Big\|_{W^{\sigma,2}([0,T];H^{-l}(\mathbb{T}^2))}\Big\}\\&+\varepsilon\log\mathbb{E}\exp\Big\{\frac{\eta}{\varepsilon^{1/2}}\|\Lambda^{2\beta}\sqrt{Q_{\delta(\varepsilon)}}W\|_{W^{\sigma,2}([0,T];H^{-l}(\mathbb{T}^2))}\Big\}=:J_1+J_2+J_3+J_4.	\end{aligned}
	\end{equation*} 
	Due to Proposition \ref{prop-ldp-initial} and the fact that
	\begin{equation*}
		\|\theta_{\varepsilon,\delta(\varepsilon)}(0)\|_{W^{\sigma,2}([0,T];H^{-l}(\mathbb{T}^2))}\leq C(T)\|\theta_{\varepsilon,\delta(\varepsilon)}(0)\|_{H^{\alpha-2\beta}(\mathbb{T}^2)},
	\end{equation*}
	we have  \begin{equation}\label{J1}
		J_1\leq C(\eta,T). 
	\end{equation}
	Using the Sobolev embedding  $W^{1,2}([0,T];H^{2\alpha-2\beta}(\mathbb{T}^2))\subset W^{\sigma,2}([0,T];H^{2\alpha-l}(\mathbb{T}^2))$ and Lemma \ref{exponential-estimate-lemma1}, we deduce that \begin{equation}\label{J2}
		\begin{aligned}
			J_2\lesssim&\varepsilon\log\mathbb{E}\exp\Big\{\frac{\eta}{\varepsilon}\Big\|\int^{\cdot}_0\Lambda^{\alpha} \theta_{\varepsilon,\delta(\varepsilon)}\Big\|_{W^{1,2}([0,T];H^{\alpha-2\beta}(\mathbb{T}^2))}\Big\}\\
			\lesssim&\varepsilon\log\mathbb{E}\exp\Big\{\frac{\eta}{\varepsilon}\int^T_0\|\Lambda^{\alpha} \theta_{\varepsilon,\delta(\varepsilon)}\|_{H^{\alpha-2\beta}(\mathbb{T}^2)}^2\,\mathrm{d}s\Big\}\leq  C(\eta,T). 
		\end{aligned}	
	\end{equation}
	For the nonlinear term, since $ l>3+2\beta-\alpha$, \eqref{decomposition} and \eqref {high-frequency} imply that for any test function $ \phi \in H^\infty(\mathbb{T}^2) $, for any $ j\in\mathbb{Z} $ and $ \iota>0, $
	\begin{equation}\label{exp-nonlinear}
		\begin{aligned}
			&\quad|\langle\nabla \cdot\left(  \theta_{\varepsilon,\delta(\varepsilon)}R^\perp \theta_{\varepsilon,\delta(\varepsilon)}\right) , \phi \rangle|\\& \lesssim	|\langle\theta_{\varepsilon,\delta(\varepsilon)}R^\perp (S_j\theta_{\varepsilon,\delta(\varepsilon)}), \nabla \phi \rangle|+|\langle S_j \theta_{\varepsilon,\delta(\varepsilon)}R^\perp (H_j\theta_{\varepsilon,\delta(\varepsilon)}), \nabla \phi \rangle|+|\langle\nabla \cdot\left( H_j \theta_{\varepsilon,\delta(\varepsilon)}R^\perp (H_j\theta_{\varepsilon,\delta(\varepsilon)})\right), \phi \rangle|\\& \lesssim \left[\left\|S_j \theta_{\varepsilon,\delta(\varepsilon)}\right\|_{H^{2\beta-\alpha}(\mathbb{T}^2)}\|\theta_{\varepsilon,\delta(\varepsilon)}\|_{H^{\alpha-2\beta}(\mathbb{T}^2)}+\|\theta_{\varepsilon,\delta(\varepsilon)}\|_{H^{\alpha-2\beta}(\mathbb{T}^2)}^2\right]\|\phi \|_{H^{3+2\beta-\alpha+\iota}(\mathbb{T}^2)} \\&\lesssim \|\theta_{\varepsilon,\delta(\varepsilon)}\|_{H^{\alpha-2\beta}(\mathbb{T}^2)}^2\|\phi \|_{H^{3+2\beta-\alpha+\iota}(\mathbb{T}^2)}.
		\end{aligned}
	\end{equation}
	Consequently, according to Lemma \ref{exponential-estimate-lemma1},
	\begin{equation}\label{J3}
		J_3
		\lesssim\varepsilon\log\mathbb{E}\exp\Big\{\frac{\eta}{\varepsilon}\int^T_0\|\theta_{\varepsilon,\delta(\varepsilon)}\|_{H^{\alpha-2\beta}(\mathbb{T}^2)}^2\,\mathrm{d}s\Big\} \leq C(\eta,T). 
	\end{equation}
	It remains to estimate the Brownian motion term $ J_4 $. For any $ c>0,$ it follows from Young's inequality that
	\begin{equation}\label{exponential-for-gaussian}
		\begin{aligned}
			J_4\lesssim&\varepsilon\log\mathbb{E}\exp\Big\{\frac{\eta}{\varepsilon^{1/2}}\|W(\cdot)\|_{W^{\sigma,2}([0,T];H^{\alpha-2\beta-1}(\mathbb{T}^2))}\Big\}\\
			\lesssim&\varepsilon\log\mathbb{E}\exp\Big\{c\|W(\cdot)\|_{W^{\sigma,2}([0,T];H^{\alpha-2\beta-1}(\mathbb{T}^2))}^2+\frac{\eta^2}{4c\varepsilon}\Big\}.
		\end{aligned}
	\end{equation}
	Since $W$ induces a Gaussian measure on $W^{\sigma,2}([0,T];H^{\alpha-2\beta-1}(\mathbb{T}^2))$, with the help of Fernique's theorem \cite[Remark 2.8]{DPZ14}, one can choose a  sufficiently small $c>0$ such that
	\begin{align}\label{fernique}
		\mathbb{E}\exp\Big\{c\|W(\cdot)\|_{W^{\sigma,2}([0,T];H^{\alpha-2\beta-1}(\mathbb{T}^2))}^2\Big\}\leq C(T)<\infty.
	\end{align}
	Substituting \eqref{fernique} into \eqref{exponential-for-gaussian}, we obtain that
	\begin{equation}\label{J4}
		J_4\leq C(\eta,T). 
	\end{equation}
	Adding \eqref{J1}, \eqref{J2}, \eqref{J3}, and \eqref{J4} together, we complete the proof of \eqref{exponential-estimate-lemma2-1}. Finally, \eqref{exponential-estimate-lemma2-2} follows from the exponential Chebyshev's inequality. 
	
\end{proof}

\begin{lemma}\label{exponential-estimate-lemma3}
	Let $\varepsilon,\delta(\varepsilon) $ and $\theta_{\varepsilon,\delta(\varepsilon)}$ satisfy the conditions in Lemma \ref{exponential-estimate-lemma2}. Fix $\sigma\in(0,1/2)$ and $l>3+2\beta-\alpha$. Then there exists $\eta_0>0$ such that for every  $\eta\in[0,\eta_0]$, under the scaling regime \eqref{scaling-delta}, we have
	\begin{equation*}
		\varepsilon\log\mathbb{E}\Big(\exp\Big\{\frac{\eta}{\varepsilon}\|\theta_{\varepsilon,\delta(\varepsilon)}\|_{C^{\sigma}([0,T];H^{-l}(\mathbb{T}^2))}^2\Big\}\Big)\leq C(\eta,T), 
	\end{equation*}
	and
	\begin{equation*}
		\limsup_{R\rightarrow\infty}\sup_{\varepsilon\in(0,1)}\varepsilon\log\mathbb{P}\Big(\|\theta_{\varepsilon,\delta(\varepsilon)}\|_{C^{\sigma}([0,T];H^{-l}(\mathbb{T}^2))}>R\Big)=-\infty.  
	\end{equation*}
\end{lemma}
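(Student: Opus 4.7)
The plan is to follow the same four-term decomposition as in the proof of Lemma \ref{exponential-estimate-lemma2}, but with each $W^{\sigma,2}$-estimate replaced by a H\"older-type one derived from the mild formulation. Since $\|\cdot\|_{C^\sigma([0,T];H^{-l})}$ is defined as the H\"older seminorm, the constant-in-time initial condition drops out, and I only need to control time increments of the drift and noise in
\[
\theta_{\varepsilon,\delta}(t)-\theta_{\varepsilon,\delta}(s) = -\int_s^t\Lambda^{2\alpha}\theta_{\varepsilon,\delta}\,\mathrm{d}\tau - \int_s^t\nabla\cdot(\theta_{\varepsilon,\delta}R^\perp\theta_{\varepsilon,\delta})\,\mathrm{d}\tau + \varepsilon^{1/2}\Lambda^{2\beta}\sqrt{Q_\delta}\bigl(W(t)-W(s)\bigr),
\]
read as an identity in $H^{-l}(\mathbb{T}^2)$ obtained from Definition \ref{def-stochastic-leray}(ii) with time-independent test functions.

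Both drift terms enjoy cheap time regularity. Minkowski and Cauchy--Schwarz give a $1/2$-H\"older bound for the dissipative piece, $\|\int_s^t\Lambda^{2\alpha}\theta_{\varepsilon,\delta}\,\mathrm{d}\tau\|_{H^{-l}}\leq |t-s|^{1/2}\|\theta_{\varepsilon,\delta}\|_{L^2([0,T];H^{2\alpha-2\beta})}$ (using $l>2\beta$), while the commutator bound \eqref{exp-nonlinear} with $l>3+2\beta-\alpha$ produces a Lipschitz-in-time bound for the quadratic term, $\|\int_s^t\nabla\cdot(\theta_{\varepsilon,\delta}R^\perp\theta_{\varepsilon,\delta})\,\mathrm{d}\tau\|_{H^{-l}}\leq C|t-s|\sup_\tau\|\theta_{\varepsilon,\delta}(\tau)\|_{H^{\alpha-2\beta}}^2$. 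Dividing by $|t-s|^\sigma$ with $\sigma<1/2$ and taking the supremum, these two contributions to $\|\theta_{\varepsilon,\delta}\|_{C^\sigma([0,T];H^{-l})}$ collapse to $C(T)\|\theta_{\varepsilon,\delta}\|_{L^2([0,T];H^{2\alpha-2\beta})}$ and $C(T)\sup_t\|\theta_{\varepsilon,\delta}(t)\|_{H^{\alpha-2\beta}}^2$, both of which possess $\varepsilon$-uniform exponential moments via Lemma \ref{exponential-estimate-lemma1} (applied to $\theta_{\varepsilon,\delta}$, as acknowledged in the remark preceding Lemma \ref{exponential-estimate-lemma2}).

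For the Gaussian piece $\Lambda^{2\beta}\sqrt{Q_\delta}W$, the key point is that $\|\Lambda^{2\beta}\sqrt{Q_\delta}\|_{HS(H\to H^{-l})}^2=\sum_k|k|^{4\beta-2l}\lambda_{\delta,k}^2\leq\sum_k|k|^{4\beta-2l}$ is finite and bounded uniformly in $\delta$, since $l>3+2\beta-\alpha>2\beta+1$. Kolmogorov's continuity theorem therefore places $\Lambda^{2\beta}\sqrt{Q_\delta}W$ in $C^\sigma([0,T];H^{-l})$ almost surely, with a Gaussian law whose second moment, and hence Fernique constant, is $\delta$-independent. A Young's inequality split exactly as in \eqref{exponential-for-gaussian}--\eqref{fernique} then controls the exponential moment of the noise contribution. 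Squaring the three-term decomposition of $\|\theta_{\varepsilon,\delta}\|_{C^\sigma([0,T];H^{-l})}$ and applying H\"older's inequality to split $\exp$ of a sum into a product, exactly as in the chain culminating in \eqref{Expection-of-exponential-of-L^2} in the proof of Lemma \ref{exponential-estimate-lemma1}, will yield the desired exponential moment bound for $\eta$ small enough; the exponential Chebyshev inequality then gives the tightness statement. The only real subtlety is maintaining $\delta$-uniformity in the Fernique constant, which is why I work in $H^{-l}$ rather than the sharper $H^{\alpha-2\beta-1}$ used in Lemma \ref{exponential-estimate-lemma2}: the crude bound $\lambda_{\delta,k}\leq 1$ is sufficient once $l$ exceeds $2\beta+1$, which is already built into the hypothesis $l>3+2\beta-\alpha$.
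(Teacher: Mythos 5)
Your proposal is correct and follows essentially the same route as the paper: decompose the $H^{-l}$-valued time increments of $\theta_{\varepsilon,\delta}$ into dissipative, nonlinear and noise parts, bound the nonlinear increment via \eqref{exp-nonlinear} by a power of $|t-s|$ times $\sup_{t}\|\theta_{\varepsilon,\delta}(t)\|_{H^{\alpha-2\beta}(\mathbb{T}^2)}^2$ (the paper gets $|t-s|^{1/2}$, you get $|t-s|$; either suffices for $\sigma<1/2$), and control everything through Lemma \ref{exponential-estimate-lemma1} and a Fernique-type bound for the Gaussian part. Your handling of the noise directly in $C^{\sigma}([0,T];H^{-l}(\mathbb{T}^2))$ via the $\delta$-uniform Hilbert--Schmidt bound $\sum_k|k|^{4\beta-2l}\lambda_{\delta,k}^2\leq\sum_k|k|^{4\beta-2l}<\infty$ and Kolmogorov's criterion is a legitimate, arguably more careful, variant of the paper's reduction to $\|W\|_{W^{\sigma,2}([0,T];H^{\alpha-2\beta-1}(\mathbb{T}^2))}$. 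The one caveat, which your write-up shares with the paper's own argument, is the final squaring step: since the nonlinear contribution to $\|\theta_{\varepsilon,\delta}\|_{C^{\sigma}([0,T];H^{-l}(\mathbb{T}^2))}$ is already quadratic in $\sup_t\|\theta_{\varepsilon,\delta}(t)\|_{H^{\alpha-2\beta}(\mathbb{T}^2)}$, squaring the decomposition produces a fourth power whose exponential moments are not supplied by Lemma \ref{exponential-estimate-lemma1}, so strictly speaking only the first-power exponential moment is established by this chain of estimates.
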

\begin{proof}
	The proof is similar to the proof of Lemma \ref{exponential-estimate-lemma2}. We only focus on the estimate of the nonlinear term. For any $ \phi\in H^{\infty}(\mathbb{T}^2) $ and any $ 0\leq s<t\leq T$,  \eqref{exp-nonlinear} and H\"older's inequality imply that  $$\begin{aligned}
		&\quad|\langle\int^{t}_s\nabla \cdot\left(  \theta_{\varepsilon,\delta(\varepsilon)}R^\perp \theta_{\varepsilon,\delta(\varepsilon)}\right)\,\mathrm{d}r, \phi \rangle|\lesssim \int^{t}_s|\langle\nabla \cdot\left(  \theta_{\varepsilon,\delta(\varepsilon)}R^\perp \theta_{\varepsilon,\delta(\varepsilon)}\right) , \phi \rangle|\,\mathrm{d}r\\&\lesssim\int^{t}_s \|\theta_{\varepsilon,\delta(\varepsilon)}\|_{H^{\alpha-2\beta}(\mathbb{T}^2)}^2\|\phi \|_{H^{l}(\mathbb{T}^2)}\,\mathrm{d}r\lesssim(t-s)^{1/2}\sup _{t \in[0, T]} \left\| \theta_{\varepsilon, \delta(\varepsilon)}(t)\right\|_{H^{\alpha-2\beta}(\mathbb{T}^2)}^2\|\phi \|_{H^{l}(\mathbb{T}^2)}.
	\end{aligned}
	$$
	Applying Lemma \ref{exponential-estimate-lemma1}, it follows that
	\begin{align*}
		&\quad\varepsilon\log\mathbb{E}\exp\Big\{\frac{\eta}{\varepsilon}\Big\|\int^{\cdot}_0\nabla \cdot\left(  \theta_{\varepsilon,\delta(\varepsilon)}R^\perp \theta_{\varepsilon,\delta(\varepsilon)}\right) \Big\|_{C^{\sigma}([0,T];H^{-l}(\mathbb{T}^2))}\Big\}\\&
		\lesssim\varepsilon\log\mathbb{E}\exp\Big\{\frac{\eta}{\varepsilon}\sup _{t \in[0, T]} \left\| \theta_{\varepsilon, \delta(\varepsilon)}(t)\right\|_{H^{\alpha-2\beta}(\mathbb{T}^2)}^2\Big\}\leq  C(\eta,T).
	\end{align*}
\end{proof}
\begin{proposition}[Exponential tightness]\label{prop-exponential-tightness}
	Let $\varepsilon,\delta(\varepsilon) $ and $\theta_{\varepsilon,\delta(\varepsilon)}$ satisfy the conditions in Lemma \ref{exponential-estimate-lemma2}. Then under the scaling regime \eqref{scaling-delta},  there exists a sequence of compact set $\{K_n\}_{n\geq1}\subset\mathbb{X}_{\alpha,\beta}$, such that the laws $\{\mu_{\varepsilon,\delta(\varepsilon)}\}$ of $\{\theta_{\varepsilon,\delta(\varepsilon)}\}$ satisfy   
	\begin{equation*}
		\limsup_{n\rightarrow\infty}\limsup_{\varepsilon\rightarrow0}\varepsilon\log\mu_{\varepsilon,\delta(\varepsilon)}(K_n)=-\infty. 
	\end{equation*}
\end{proposition}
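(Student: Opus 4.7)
The plan is to build the compact sets $K_n$ as joint sublevel sets of the three norms whose exponential moments have already been bounded in Lemmas \ref{exponential-estimate-lemma1}--\ref{exponential-estimate-lemma3}, and then combine an Aubin-Lions-Simon compactness result with a weak Arzelà-Ascoli argument. Concretely, fix $\sigma\in(0,1/2)$ and $l>3+2\beta-\alpha$ and set
$$
K_n := \Big\{\theta\in\mathbb{X}_{\alpha,\beta} : \sup_{t\in[0,T]}\|\theta(t)\|_{H^{\alpha-2\beta}(\mathbb{T}^2)}^2 + \int_0^T\|\theta(s)\|_{H^{2\alpha-2\beta}(\mathbb{T}^2)}^2\,\mathrm{d}s \leq n,\ \ \|\theta\|_{C^\sigma([0,T];H^{-l}(\mathbb{T}^2))}\leq n\Big\}.
$$

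First I would verify that each $K_n$ is compact in $\mathbb{X}_{\alpha,\beta}$. The uniform bound in $L^2([0,T];H^{2\alpha-2\beta}(\mathbb{T}^2))$ together with the bound in $C^\sigma([0,T];H^{-l}(\mathbb{T}^2))$, combined with the compact chain of embeddings $H^{2\alpha-2\beta}(\mathbb{T}^2)\hookrightarrow\hookrightarrow H^{\alpha-2\beta}(\mathbb{T}^2)\hookrightarrow H^{-l}(\mathbb{T}^2)$ on the torus, yields relative compactness in the strong topology of $L^2([0,T];H^{\alpha-2\beta}(\mathbb{T}^2))$ via the Aubin-Lions-Simon lemma. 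The same $L^2 H^{2\alpha-2\beta}$ bound automatically yields weak relative compactness in $L^2_w([0,T];H^{2\alpha-2\beta}(\mathbb{T}^2))$. Finally, the $L^\infty([0,T];H^{\alpha-2\beta}(\mathbb{T}^2))$ bound together with the Hölder modulus in $H^{-l}$ gives compactness in $C([0,T];H^{\alpha-2\beta}_w(\mathbb{T}^2))$ by a weak Arzelà-Ascoli argument: for each $\phi$ in the dual of $H^{\alpha-2\beta}$, approximation by smooth $\phi_k\in H^{l}(\mathbb{T}^2)$ together with the uniform $L^\infty H^{\alpha-2\beta}$ bound transfers the equicontinuity of $\langle\theta(\cdot),\phi_k\rangle$ to $\langle\theta(\cdot),\phi\rangle$.

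Next I would control $\mu_{\varepsilon,\delta}(K_n^c)$ by a union bound,
$$
\mu_{\varepsilon,\delta}(K_n^c) \leq \mathbb{P}\Big(\sup_{t\in[0,T]}\|\theta_{\varepsilon,\delta}(t)\|_{H^{\alpha-2\beta}(\mathbb{T}^2)}^2 + \int_0^T\|\theta_{\varepsilon,\delta}(s)\|_{H^{2\alpha-2\beta}(\mathbb{T}^2)}^2\,\mathrm{d}s > n\Big) + \mathbb{P}\big(\|\theta_{\varepsilon,\delta}\|_{C^\sigma([0,T];H^{-l}(\mathbb{T}^2))} > n\big).
$$
Applying the elementary inequality $\varepsilon\log(a+b)\leq \varepsilon\log 2 + \varepsilon\max(\log a,\log b)$ together with Lemma \ref{exponential-estimate-lemma1} (extended to $\theta_{\varepsilon,\delta}$ under \eqref{scaling-delta} via the pathwise $H^{\alpha-2\beta}$-energy inequality, as indicated in the remark preceding Lemma \ref{exponential-estimate-lemma2}) and Lemma \ref{exponential-estimate-lemma3}, both probabilities on the right-hand side decay super-exponentially in $n$, uniformly in $\varepsilon\in(0,1)$. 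Taking $\limsup_{\varepsilon\to 0}$ followed by $\limsup_{n\to\infty}$ then gives the claim.

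The only slightly delicate point, rather than a deep obstacle, is the identification of a compact set in $C([0,T];H^{\alpha-2\beta}_w(\mathbb{T}^2))$: one must check that a metric generating the weak topology on bounded balls of $H^{\alpha-2\beta}(\mathbb{T}^2)$ is compatible with the Hölder modulus in $H^{-l}(\mathbb{T}^2)$, so that the weak Arzelà-Ascoli argument applies. Once this is in place, all remaining steps reduce to the exponential estimates already established.
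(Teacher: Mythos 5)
Your proposal is correct and follows essentially the same route as the paper: define $K_n$ as joint sublevel sets of the norms controlled in Lemmas \ref{exponential-estimate-lemma1}--\ref{exponential-estimate-lemma3}, obtain compactness in $L^2([0,T];H^{\alpha-2\beta})$ via Aubin--Lions, in $L^2_w([0,T];H^{2\alpha-2\beta})$ via weak compactness of balls, and in $C([0,T];H^{\alpha-2\beta}_w)$ via a weak Arzel\`a--Ascoli argument, then conclude by a union bound on $\mu_{\varepsilon,\delta}(K_n^c)$. The only (immaterial) difference is that you cap $\sup_t\|\theta(t)\|_{H^{\alpha-2\beta}}^2$ and drop the $W^{\sigma,2}([0,T];H^{-l})$ norm, whereas the paper's $K_n$ uses the $L^2 H^{2\alpha-2\beta}$, $W^{\sigma,2}H^{-l}$ and $C^\sigma H^{-l}$ norms; your $C^\sigma$ bound supplies the time-equicontinuity needed for Aubin--Lions--Simon just as well.
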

\begin{proof}
	This follows by the same argument as in \cite[Corollary 4.2]{GHW24_LLNS}. In our case, $ K_n  $ can be chosen as
	\begin{equation*}
		K_n:=\{\theta\in\mathbb{X}_{\alpha,\beta}:\|\theta\|_{L^2([0,T];H^{2\alpha-2\beta}(\mathbb{T}^2))}+\|\theta\|_{W^{\sigma,2}([0,T];H^{-l}(\mathbb{T}^2))}+\|\theta\|_{C^{\sigma}([0,T];H^{-l}(\mathbb{T}^2))}\leq n\},
	\end{equation*}where $\sigma\in(0,1/2)$ and  $l>3+2\beta-\alpha$.
	Indeed, Lemma \ref{exponential-estimate-lemma1}, Lemma \ref{exponential-estimate-lemma2}, and Aubin-Lions lemma imply the exponential tightness of $\{\mu_{\varepsilon,\delta(\varepsilon)}\}$ in $ L^2([0,T];H^{\alpha-2\beta}(\mathbb{T}^2)). $ The exponential tightness in  $	L_w^2([0,T];H^{2\alpha-2\beta}(\mathbb{T}^2))$ is a direct conclusion of Lemma \ref{exponential-estimate-lemma1}. Finally, it follows from Lemma \ref{exponential-estimate-lemma3} that $\{\mu_{\varepsilon,\delta(\varepsilon)}\}$ is exponentially tight in $C([0,T];H^{\alpha-2\beta}_w(\mathbb{T}^2)) $. 
\end{proof}

\section{Characterization of Rate Function}\label{section-rate}
Let $g\in L^2([0,T];H)$. We recall that the skeleton equation for the large deviations is given by \eqref{PDE-SQG}
in our setting, and the rate function is given by 
\begin{equation}\label{rate-function}
	\mathcal{I}(\theta)=\mathcal{I}_{0}(\theta(0))+\mathcal{I}_{dyna}(\theta),\ \ \theta\in\mathbb{X}_{\alpha,\beta},
\end{equation}
where
\begin{equation}\label{rate-stationary}
	\mathcal{I}_{0}(\phi)=
	\|\phi\|_{H^{\alpha-2\beta}(\mathbb{T}^2)}^2, \quad \phi\in H^{\alpha-2\beta}(\mathbb{T}^2),
\end{equation}
and 
\begin{equation}\label{rate-dynamic}
	\mathcal{I}_{dyna}(\theta)=\frac{1}{2}\inf\Big\{\|g\|_{L^2([0,T];H)}^2:	\partial_t\theta=\Lambda^{2\alpha}\theta-u_\theta\cdot\nabla \theta+\Lambda^{2\beta}g\Big\}.
\end{equation}
The equality in \eqref{rate-dynamic} holds in the following sense.

\begin{definition}[Weak solution of the skeleton equation]\label{def-weak-solution-skeleton-equation}
	Let $\theta_0\in H^{\alpha-2\beta}(\mathbb{T}^2)$ and $g\in L^2([0,T];H)$. We say that $\theta$ is a weak solution of \eqref{PDE-SQG} with initial data $\theta_0$ if 
	
	(i)
	$
	\theta\in  L^\infty([0,T];H^{\alpha-2\beta}(\mathbb{T}^2)) \cap  L^2([0,T];H^{2\alpha-2\beta}(\mathbb{T}^2))\cap C([0,T];H_w^{\alpha-2\beta}(\mathbb{T}^2)).
	$
	
	(ii) For every $\varphi\in  C^\infty([0,T];H^\infty(\mathbb{T}^2))$, for all $ t\in[0,T]$,
	\begin{equation}\label{weak-solution-skeleton-equation}
		\begin{aligned}
			&\langle \theta(t),\varphi(t)\rangle=\langle \theta_0,\varphi(0)\rangle+\int^t_0\langle \theta,\partial_s\varphi\rangle \,\mathrm{d}s-\int^t_0\langle \theta,\Lambda^{2\alpha}\varphi\rangle  \,\mathrm{d}s\\&-\frac{1}{2}\int^t_0\left\langle R_2 \theta,\left[\Lambda, \partial_1 \varphi\right]\Lambda^{-1} \theta\right\rangle \,\mathrm{d} s+\frac{1}{2}\int_0^t\left\langle R_1 \theta  ,\left[\Lambda, \partial_2 \varphi\right]\Lambda^{-1} \theta\right\rangle \,\mathrm{d}s+\int^t_0\langle\Lambda^{2\beta}\varphi,g\rangle \,\mathrm{d}s.
		\end{aligned}
	\end{equation}
\end{definition}
In the sequel, we will provide a variational characterization of $\mathcal{I}$, which is equivalent to the expression \eqref{rate-function}, \eqref{rate-stationary}, and \eqref{rate-dynamic}. We now define the map $\Lambda_0(\cdot,\cdot):C_b(H^{\alpha-2\beta}_w(\mathbb{T}^2))\times H^{\alpha-2\beta}(\mathbb{T}^2)\rightarrow\mathbb{R}$ by
\begin{equation}\label{Lambda-0}
	\Lambda_0(\psi,\phi):=\psi(\phi)-\lambda(\psi),
\end{equation}
where 	\begin{equation}\label{lambda}
	\lambda(\psi):=\sup_{\phi\in  {H^{\alpha-2\beta}(\mathbb{T}^2)}}\{\psi(\phi)-\mathcal{I}_{0}(\phi)\}.
\end{equation}
Moreover,  $\Lambda_1^T(\cdot,\cdot):C^{\infty}([0,T];H^\infty(\mathbb{T}^2))\times \mathbb{X}_{\alpha,\beta}\rightarrow\mathbb{R}$ is defined by
\begin{align*}
	&\Lambda_1^T(\varphi,\theta):=\langle \theta(T),\varphi(T)\rangle-\langle \theta(0),\varphi(0)\rangle-\int_{0}^{T}\langle \theta,\partial_t\varphi\rangle\,\mathrm{d}t+\int_{0}^{T}\langle \theta,\Lambda^{2\alpha}\varphi\rangle\,\mathrm{d}t\\
	&+\frac{1}{2}\int_{0}^{T}\left\langle R_2 \theta,\left[\Lambda, \partial_1 \varphi\right]\Lambda^{-1} \theta\right\rangle \,\mathrm{d}t-\frac{1}{2}\int_0^T\left\langle R_1 \theta  ,\left[\Lambda, \partial_2 \varphi\right]\Lambda^{-1} \theta\right\rangle \,\mathrm{d}t-\frac{1}{2}\|\Lambda^{2\beta}\varphi\|_{L^2([0,T];H)}^2.
\end{align*}
\begin{proposition}\label{prop-variational-characterization-rate-function}
	Let the rate functions $\mathcal{I}_{0},\mathcal{I}_{dyna},\mathcal{I}$ be defined by \eqref{rate-stationary}, \eqref{rate-dynamic}, and \eqref{rate-function}, respectively.  Then for any $\theta\in\mathbb{X}_{\alpha,\beta}$ with $\mathcal{I}(\theta)<\infty$, we have 
	\begin{equation}
		\mathcal{I}(\theta)=\sup_{\substack{\varphi\in C^{\infty}([0,T];H^\infty(\mathbb{T}^2))\\\psi\in C_b(H^{\alpha-2\beta}_w(\mathbb{T}^2))}}\Big\{\Lambda_0(\psi,\theta(0))+\Lambda_1(\varphi,\theta)\Big\}.  	
	\end{equation}
\end{proposition}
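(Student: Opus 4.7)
The plan is to split the joint supremum into two independent suprema, one for the static initial-data piece and one for the dynamical evolution piece, and then identify each with its counterpart in \eqref{rate-function}. Since $\Lambda_0(\psi,\theta(0))$ depends only on $(\psi,\theta(0))$ while $\Lambda_1^T(\varphi,\theta)$ depends only on $(\varphi,\theta)$, the joint supremum factorizes:
$$
\sup_{\varphi,\psi}\big\{\Lambda_0(\psi,\theta(0))+\Lambda_1^T(\varphi,\theta)\big\}=\sup_{\psi}\Lambda_0(\psi,\theta(0))+\sup_{\varphi}\Lambda_1^T(\varphi,\theta).
$$
It therefore suffices to show that the first supremum equals $\mathcal{I}_0(\theta(0))$ and the second equals $\mathcal{I}_{dyna}(\theta)$.

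For the static piece, $\lambda$ is by \eqref{lambda} the Legendre transform of $\mathcal{I}_0$ computed over the class $C_b(H^{\alpha-2\beta}_w(\mathbb{T}^2))$, so the inequality $\sup_\psi\Lambda_0(\psi,\theta(0))\le \mathcal{I}_0(\theta(0))$ is immediate from the definition. For the reverse inequality, given $\phi_0\in H^{\alpha-2\beta}(\mathbb{T}^2)$ with $c:=\|\phi_0\|_{H^{\alpha-2\beta}}^2$, I would test with the bounded cylindrical functional
$$
\psi_M(\phi):=\big((-M)\vee 2\langle \phi_0,\phi\rangle_{H^{\alpha-2\beta}}\big)\wedge M,
$$
which lies in $C_b(H^{\alpha-2\beta}_w)$ because $\phi\mapsto\langle\phi_0,\phi\rangle_{H^{\alpha-2\beta}}$ is weakly continuous. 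Choosing $M\ge 2c$ and optimizing, the key inequality $(M-2c)^2\ge 0$ together with Cauchy--Schwarz yields $\psi_M(\eta)-\|\eta\|_{H^{\alpha-2\beta}}^2\le c$ for every $\eta$, while $\psi_M(\phi_0)-\|\phi_0\|_{H^{\alpha-2\beta}}^2=c$, so $\lambda(\psi_M)=c$ and $\Lambda_0(\psi_M,\phi_0)=c=\mathcal{I}_0(\phi_0)$.

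For the dynamical piece, the assumption $\mathcal{I}(\theta)<\infty$ together with the invertibility of $\Lambda^{2\beta}$ on mean-zero functions guarantees that the infimum in \eqref{rate-dynamic} is attained by a \emph{unique} $g\in L^2([0,T];H)$ for which $\theta$ is a weak solution of \eqref{PDE-SQG} in the sense of Definition \ref{def-weak-solution-skeleton-equation}. Testing \eqref{weak-solution-skeleton-equation} at $t=T$ against $\varphi\in C^\infty([0,T];H^\infty(\mathbb{T}^2))$ and rearranging, using the commutator interpretation \eqref{tempered distribution} of the nonlinear terms (whose well-definedness is provided by Lemmas \ref{lemma-comutator} and \ref{lemma-calderon}), one obtains
$$
\Lambda_1^T(\varphi,\theta)=\int_0^T\langle \Lambda^{2\beta}\varphi,g\rangle\,\mathrm{d}s-\frac{1}{2}\|\Lambda^{2\beta}\varphi\|_{L^2([0,T];H)}^2.
$$
Writing $h:=\Lambda^{2\beta}\varphi$, and noting that $\Lambda^{2\beta}$ acts diagonally and bijectively on the eigenbasis $\{e_k\}$ so that $\Lambda^{2\beta}\big(C^\infty([0,T];H^\infty(\mathbb{T}^2))\big)$ is dense in $L^2([0,T];H)$, the supremum becomes the scalar Legendre transform
$$
\sup_{\varphi}\Lambda_1^T(\varphi,\theta)=\sup_{h\in L^2([0,T];H)}\Big\{\int_0^T\langle h,g\rangle\,\mathrm{d}s-\tfrac{1}{2}\|h\|_{L^2([0,T];H)}^2\Big\}=\tfrac{1}{2}\|g\|_{L^2([0,T];H)}^2=\mathcal{I}_{dyna}(\theta).
$$

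I expect the main subtlety to lie in the cylindrical-truncation argument of the static part: a direct application of Fenchel--Moreau duality would take the supremum over all continuous linear functionals on $H^{\alpha-2\beta}$, but the proposition restricts to the strictly smaller class $C_b(H^{\alpha-2\beta}_w)$, so one must verify explicitly that truncated linear functionals already saturate the Legendre transform. The dynamic part is then essentially a standard Hilbert-space Legendre transform once the weak equation has been rewritten via Lemma \ref{lemma-comutator}; the only place where the low-regularity framework $\theta\in\mathbb{X}_{\alpha,\beta}$ intervenes is in interpreting the nonlinear terms, and this interpretation is fixed by \eqref{tempered distribution} and controlled, thanks to Lemma \ref{lemma-calderon}, by $\|\theta\|_{H^{\alpha-2\beta}}^2\|\varphi\|_{H^{3+2\beta-\alpha+\iota}}$.
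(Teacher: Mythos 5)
Your proposal is correct, and the dynamical half is essentially the paper's argument: both reduce $\sup_\varphi\Lambda_1^T(\varphi,\theta)$ to the Hilbert-space Legendre transform $\sup_{h}\{\int_0^T\langle h,g\rangle\,\mathrm{d}s-\tfrac12\|h\|^2_{L^2([0,T];H)}\}=\tfrac12\|g\|^2_{L^2([0,T];H)}$ via the density of $\Lambda^{2\beta}\big(C^\infty([0,T];H^\infty(\mathbb{T}^2))\big)$ in $L^2([0,T];H)$ (the paper phrases this through the Riesz representation of the linear part $F^T(\cdot,\theta)$, and records the uniqueness of $g$ as Remark \ref{uniqueness-of-g}; your appeal to $\mathcal{I}(\theta)<\infty$ to produce $g$ is legitimate since that is the standing hypothesis). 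Where you genuinely diverge is the static part: the paper obtains \eqref{variational-stationary-part} by citing the Gaussian large deviation principle for the initial data (Proposition \ref{prop-ldp-initial}) together with Bryc's inverse Varadhan lemma, whereas you verify the Fenchel identity directly by testing $\lambda$ against the truncated linear functionals $\psi_M(\phi)=((-M)\vee 2\langle\phi_0,\phi\rangle_{H^{\alpha-2\beta}})\wedge M$ and checking, via $2\langle\phi_0,\eta\rangle-\|\eta\|^2=\|\phi_0\|^2-\|\phi_0-\eta\|^2\le\|\phi_0\|^2$ with equality at $\eta=\phi_0$, that $\lambda(\psi_M)=\|\phi_0\|^2$ and $\Lambda_0(\psi_M,\phi_0)=\|\phi_0\|^2$ once $M\ge 2\|\phi_0\|^2$. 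Your route is more elementary and self-contained (it needs no probabilistic input and in particular no reference to the scaling regime or to Appendix \ref{section-initial-LDP}), and it correctly handles the subtlety you flag, namely that the test class $C_b(H^{\alpha-2\beta}_w(\mathbb{T}^2))$ is smaller than the full dual; the paper's route buys consistency with the machinery it already needs for Varadhan's lemma in the upper-bound proof. Both are valid proofs of the proposition.
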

\begin{proof}
	It is sufficient to prove that for any $\theta\in\mathbb{X}_{\alpha,\beta}$ with $\mathcal{I}(\theta)<\infty$, we have 
	\begin{equation}\label{variational-stationary-part}
		\mathcal{I}_{0}(\theta(0))=\sup_{\psi\in C_b(H^{\alpha-2\beta}_w(\mathbb{T}^2))}	\Lambda_0(\psi,\theta(0)),
	\end{equation}
	and
	\begin{equation}\label{variational-dynamical-part}
		\mathcal{I}_{dyna}(\theta)=\sup_{\varphi\in C^{\infty}([0,T];H^\infty(\mathbb{T}^2))}\Lambda_1^T(\varphi,\theta).
	\end{equation}
	According to Proposition \ref{prop-ldp-initial}, $\mu_{\varepsilon, \delta(\varepsilon)}^0= \mathcal{G}(0, \varepsilon Q_{\delta(\varepsilon)}/2) $ satisfies the large deviations on $ H^{\alpha-2\beta}_w(\mathbb{T}^2)$  with rate function $ \mathcal{I}_0 $ under Assumption \ref{Gaussian-initial-data} and the scaling regime \eqref{scaling-delta}. Thus \eqref{variational-stationary-part} follows from Proposition \ref{prop-ldp-initial} and Bryc's lemma (see, for example, \cite[Theorem 4.4.2]{DZ10_LDP}). 
	Next we prove \eqref{variational-dynamical-part}. Let
	$F^T(\cdot,\theta):C^{\infty}([0,T];H^\infty(\mathbb{T}^2))\rightarrow\mathbb{R}$ be the linear part of $ 	\Lambda_1^T(\cdot,\theta):$
	\begin{equation}\label{the-map-F}
		\begin{aligned}
			F^T(\varphi,\theta):=&\langle \theta(T),\varphi(T)\rangle-\langle \theta(0),\varphi(0)\rangle-\int_{0}^{T}\langle \theta,\partial_t\varphi\rangle\,\mathrm{d}t+\int_{0}^{T} \langle\theta,\Lambda^{2\alpha}\varphi\rangle\,\mathrm{d}t\\
			&+\frac{1}{2}\int_{0}^{T}\left\langle R_2 \theta,\left[\Lambda, \partial_1 \varphi\right]\Lambda^{-1} \theta\right\rangle \,\mathrm{d}t-\frac{1}{2}\int_0^T\left\langle R_1 \theta  ,\left[\Lambda, \partial_2 \varphi\right]\Lambda^{-1} \theta\right\rangle \,\mathrm{d}t.
		\end{aligned}
	\end{equation}
	For any $\theta$ satisfying $I_{dyna}(\theta)<\infty$, by the definition of $\mathcal{I}_{dyna}(\theta)$,  there exists a function $g\in L^2([0,T];H)$ such that, for any $ \varphi \in C^{\infty}([0,T];H^\infty(\mathbb{T}^2)) $,
	\begin{align*}
		F^T(\varphi,\theta)=\int_{0}^{T}\langle g,\Lambda^{2\beta}\varphi\rangle\,\mathrm{d}t. 
	\end{align*}
	Using the definition of $\Lambda_1^T(\cdot,\theta)$ and Cauchy-Schwarz inequality, we have
	\begin{align*}
		\Lambda_1^T(\varphi,\theta)=\int^T_0\langle g,\Lambda^{2\beta}\varphi\rangle\,\mathrm{d} t-\frac{1}{2}\|\Lambda^{2\beta}\varphi\|_{L^2([0,T];H)}^2\leq\frac{1}{2}\|g\|_{L^2([0,T];H)}^2. 	
	\end{align*}
	Taking the supremum over $ \varphi \in C^{\infty}([0,T];H^\infty(\mathbb{T}^2)) $, it follows that 
	\begin{equation}\label{eq-inf}
		\sup_{ \varphi \in C^{\infty}([0,T];H^\infty(\mathbb{T}^2)) }\Lambda_1^T(\varphi,\theta)\leq\frac{1}{2}\|g\|_{L^2([0,T];H)}^2.	
	\end{equation}  Taking the infimum of the right-hand side of \eqref{eq-inf} over all $ g\in L^2([0,T];H) $ such that $\theta$ is a weak solution of \eqref{PDE-SQG} with $ g $, we conclude that  	\begin{align}\label{dynamical-ineq}
		\sup_{ \varphi \in C^{\infty}([0,T];H^\infty(\mathbb{T}^2)) }\Lambda_1^T(\varphi,\theta)\leq	\mathcal{I}_{dyna}(\theta).	
	\end{align} 
	In the sequel, we prove the reverse inequality of \eqref{dynamical-ineq}.  It is sufficient to discuss the case that $ \sup\limits_{ \varphi \in C^{\infty}([0,T];H^\infty(\mathbb{T}^2)) }\Lambda_1^T(\varphi,\theta) <\infty$.  By the definition of $ \Lambda_1^T $,
	\begin{align*}
		\sup_{\substack{\varphi \in C^{\infty}([0,T];H^\infty(\mathbb{T}^2)) ,\\\|\varphi\|_{L^2([0,T];H^{2\beta}(\mathbb{T}^2))}\leq 1 }}	|F^T(\varphi,\theta)|\leq&\sup\limits_{ \varphi \in C^{\infty}([0,T];H^\infty(\mathbb{T}^2)) }\Lambda_1^T(\varphi,\theta)+\frac{1}{2}<+\infty.
	\end{align*}
	Since $ C^{\infty}([0,T];H^\infty(\mathbb{T}^2))$ is dense in $ L^2([0,T];H^{2\beta}(\mathbb{T}^2)) $,  $ F^T(\cdot,\theta)$ can be extended to a continuous linear functional from $L^2([0,T];H^{2\beta}(\mathbb{T}^2)) $ to $\mathbb{R}$. Using Riesz's representation theorem, one can see that there exists $\Psi^\theta\in L^2([0,T];H^{2\beta}(\mathbb{T}^2)) $ such that
	\begin{equation}\label{riesz}
		F^T(\varphi,\theta)=\langle\Psi^\theta,\varphi\rangle_{L^2([0,T];H^{2\beta}(\mathbb{T}^2))}. 
	\end{equation}
	Let $g=g^\theta:=\Lambda^{2\beta}\Psi^\theta \in L^2([0,T];H)$, \eqref{riesz} implies that
	$\theta$ is a weak solution of \eqref{PDE-SQG} with respect to the control  $g$ in the sense of Definition \ref{def-weak-solution-skeleton-equation}.  Moreover, a dual Hilbert space analysis as in \cite[Appendix B]{GH23} implies that $$	\sup_{\varphi\in C^{\infty}([0,T];H^\infty(\mathbb{T}^2))}\Lambda_1^T(\varphi,\theta)=\frac{1}{2}\|g^\theta\|_{L^2([0,T];H)}^2 \geq	\mathcal{I}_{dyna}(\theta). 	$$
\end{proof}

\begin{remark}\label{uniqueness-of-g}
	It can be seen from the above proof that, for any $\theta\in\mathbb{X}_{\alpha,\beta}$ with $\mathcal{I}(\theta)<\infty$, $ g^\theta $ is the unique element in $ L^2([0,T];H) $ such that $ \theta $ is a weak solution of \eqref{PDE-SQG}.
\end{remark}
\begin{lemma} \label{lemma-lsc}
	Let $\mathcal{I}$ be the rate function defined by \eqref{rate-function}. Then $\mathcal{I}$ is lower semi-continuous with respect to the topology of $\mathbb{X}_{\alpha,\beta}$. 
\end{lemma}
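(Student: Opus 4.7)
My plan is to invoke the variational characterization of Proposition \ref{prop-variational-characterization-rate-function}: since a pointwise supremum of continuous functions is lower semi-continuous, once one knows that
\begin{equation*}
\mathcal{I}(\theta)=\sup_{\varphi,\psi}\bigl\{\Lambda_0(\psi,\theta(0))+\Lambda_1^T(\varphi,\theta)\bigr\}\quad\text{for every }\theta\in\mathbb{X}_{\alpha,\beta},
\end{equation*}
lower semi-continuity of $\mathcal{I}$ reduces to the continuity of each fixed functional $\theta\mapsto\Lambda_0(\psi,\theta(0))+\Lambda_1^T(\varphi,\theta)$ on $\mathbb{X}_{\alpha,\beta}$. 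Proposition \ref{prop-variational-characterization-rate-function} states this identity only when $\mathcal{I}(\theta)<\infty$, so first I would extend it to the case $\mathcal{I}(\theta)=+\infty$. This is painless: every $\theta\in\mathbb{X}_{\alpha,\beta}$ has $\theta(0)\in H^{\alpha-2\beta}(\mathbb{T}^2)$, whence $\mathcal{I}_0(\theta(0))<\infty$ and $\mathcal{I}(\theta)=+\infty$ forces $\mathcal{I}_{dyna}(\theta)=+\infty$; the Riesz representation argument in the proof of Proposition \ref{prop-variational-characterization-rate-function} then shows in contrapositive form that $\sup_\varphi\Lambda_1^T(\varphi,\theta)=+\infty$ in that case as well.

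Next I would verify the continuity of each fixed $\Lambda_0+\Lambda_1^T$ on $\mathbb{X}_{\alpha,\beta}$. For the initial-data part, $\theta_n\to\theta$ in $C([0,T];H^{\alpha-2\beta}_w(\mathbb{T}^2))$ implies $\theta_n(0)\to\theta(0)$ in $H^{\alpha-2\beta}_w$, and then $\psi\in C_b(H^{\alpha-2\beta}_w(\mathbb{T}^2))$ gives $\psi(\theta_n(0))\to\psi(\theta(0))$; the additive constant $\lambda(\psi)$ is inert. The linear part of $\Lambda_1^T$ is also straightforward: the boundary evaluations $\langle\theta(\cdot),\varphi(\cdot)\rangle$ exploit the topology of $C([0,T];H^{\alpha-2\beta}_w)$; the time-derivative contribution $\int_0^T\langle\theta,\partial_t\varphi\rangle\,dt$ is continuous on $L^2([0,T];H^{\alpha-2\beta}(\mathbb{T}^2))$; and the viscous contribution $\int_0^T\langle\theta,\Lambda^{2\alpha}\varphi\rangle\,dt$ is continuous on $L^2_w([0,T];H^{2\alpha-2\beta}(\mathbb{T}^2))$ since $\Lambda^{2\alpha}\varphi$ is a fixed smooth dual test element.

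The hard part will be the quadratic nonlinear term. My plan is to apply the commutator estimate of Lemma \ref{lemma-calderon} with $r:=2\beta-\alpha\in\{0,1/2\}\subset(-1,1)$; combined with the embedding $H^{\alpha-2\beta}\hookrightarrow H^{r-1}$ (valid because $\alpha-2\beta\geq -1/2$ in both regimes of \eqref{pama-range}), this yields
\begin{equation*}
\|[\Lambda,\partial_i\varphi]\Lambda^{-1}\theta_2\|_{H^{r}(\mathbb{T}^2)}\leq C(\varphi)\|\Lambda^{-1}\theta_2\|_{H^{r}(\mathbb{T}^2)}=C(\varphi)\|\theta_2\|_{H^{r-1}(\mathbb{T}^2)}\leq C(\varphi)\|\theta_2\|_{H^{\alpha-2\beta}(\mathbb{T}^2)},
\end{equation*}
so by duality between $H^{\alpha-2\beta}$ and $H^{2\beta-\alpha}$,
\begin{equation*}
\bigl|\langle R_j\theta_1,[\Lambda,\partial_i\varphi]\Lambda^{-1}\theta_2\rangle\bigr|\leq C(\varphi)\|\theta_1\|_{H^{\alpha-2\beta}(\mathbb{T}^2)}\|\theta_2\|_{H^{\alpha-2\beta}(\mathbb{T}^2)}.
\end{equation*}
Splitting the difference bilinearly and integrating in time, I expect
\begin{equation*}
\int_0^T\bigl|\langle R_j\theta_n,[\Lambda,\partial_i\varphi]\Lambda^{-1}\theta_n\rangle-\langle R_j\theta,[\Lambda,\partial_i\varphi]\Lambda^{-1}\theta\rangle\bigr|\,dt\leq C(\varphi)\|\theta_n-\theta\|_{L^2_tH^{\alpha-2\beta}_x}\bigl(\|\theta_n\|_{L^2_tH^{\alpha-2\beta}_x}+\|\theta\|_{L^2_tH^{\alpha-2\beta}_x}\bigr),
\end{equation*}
which vanishes as $n\to\infty$ because strong $L^2([0,T];H^{\alpha-2\beta}(\mathbb{T}^2))$ convergence is one of the components in the topology of $\mathbb{X}_{\alpha,\beta}$. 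Together with the previous paragraph, this establishes the continuity of each $\Lambda_0+\Lambda_1^T$, and hence the lower semi-continuity of $\mathcal{I}$.
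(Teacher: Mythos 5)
Your proof is correct, but your treatment of the nonlinear term takes a genuinely different route from the paper's. The paper proves the continuity of $\theta\mapsto\int_0^T\langle R_j\theta,[\Lambda,\partial_i\varphi]\Lambda^{-1}\theta\rangle\,\mathrm{d}t$ via the Littlewood--Paley splitting \eqref{decomposition}: the high-high block is made small \emph{uniformly in $n$} using \eqref{high-frequency} with $r=2\beta-2\alpha$ (yielding the factor $2^{-j(4\alpha+1-4\beta)}$ against the bounded $L^2_tH^{2\alpha-2\beta}_x$ norms), while the low-frequency blocks converge via the strong $L^2_tH^{1/2}_x$ convergence of $S_{j_0}\theta_n$ from Marchand's lemma combined with weak $L^2_tH^{2\alpha-2\beta}_x$ convergence. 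You instead apply the commutator estimate of Lemma \ref{lemma-calderon} once, at the single exponent $r=2\beta-\alpha$, to the whole field, which shows the quadratic form is locally Lipschitz on $L^2([0,T];H^{\alpha-2\beta}(\mathbb{T}^2))$ and lets the strong convergence in that component of $\mathbb{X}_{\alpha,\beta}$ finish the argument in one line. Your exponent bookkeeping checks out in both regimes of \eqref{pama-range} ($r\in\{0,1/2\}\subset(-1,1)$, $r-1\leq\alpha-2\beta$, and the Riesz transform is bounded on $H^{\alpha-2\beta}$), and your $H^{\alpha-2\beta}$--$H^{2\beta-\alpha}$ pairing coincides with the paper's $H^{-1/2}$--$H^{1/2}$ dual on the spaces involved, so you are estimating exactly the object appearing in $\Lambda_1^T$. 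Your route is shorter and avoids Marchand's lemma and the uniform tail estimate; the paper's decomposition is the one that recurs in the compactness arguments elsewhere (e.g.\ Lemma \ref{lemma-converge-to-delta-v}). Two further minor divergences, both sound: you explicitly patch the $\mathcal{I}(\theta)=+\infty$ case of Proposition \ref{prop-variational-characterization-rate-function} via the contrapositive of the Riesz representation step (the paper leaves this implicit), and you obtain lower semi-continuity of $\theta\mapsto\mathcal{I}_0(\theta(0))$ through $\Lambda_0$ rather than, as the paper does, directly from the weak lower semi-continuity of the $H^{\alpha-2\beta}$-norm.
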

\begin{proof}
	The lower semi-continuity of the $H^{\alpha-2\beta}(\mathbb{T}^2)$-norm with respect to the weak topology of $H^{\alpha-2\beta}(\mathbb{T}^2)$ implies  that the map $ \theta\mapsto\mathcal{I}_{0}(\theta(0))$ is lower semi-continuous on $\mathbb{X}_{\alpha,\beta}$. We are left with the task of proving the  lower semi-continuity of $ \mathcal{I}_{dyna}$. Suppose that $ \theta \in \mathbb{X}_{\alpha,\beta}$ and $ \{\theta_n\}_{n\geq 1} \subset \mathbb{X}_{\alpha,\beta} $ satisfy $\theta_n\rightarrow \theta$ in $\mathbb{X}_{\alpha,\beta}$ as $n\rightarrow\infty$.  
	With the help of Proposition  \ref{prop-variational-characterization-rate-function}, it is sufficient to show that for arbitrary fixed $\varphi\in C^{\infty}([0,T];H^\infty(\mathbb{T}^2))$, the map $\Lambda_1^T(\varphi,\cdot)$ is continuous with respect to the topology of $\mathbb{X}_{\alpha,\beta}$:
	\begin{equation}\label{convergence-lambda-1}
		\lim_{n\rightarrow \infty}\Lambda_1^T(\varphi,\theta_n)=\Lambda_1^T(\varphi,\theta).
	\end{equation}The definition of the topology of $\mathbb{X}_{\alpha,\beta}$ implies the convergence of $\langle \theta_n(T),\varphi(T)\rangle$, $\langle \theta_n(0),\theta(0)\rangle$, and  $\int_{0}^T\langle \theta_n,\partial_t\varphi\rangle\,\mathrm{d}t$ directly.
	The convergence of the dissipation term $ \int_{0}^T\langle \theta_n,\Lambda^{2\alpha}\varphi\rangle\,\mathrm{d}t$ follows from the fact that $\theta_n\rightarrow \theta$ weakly in $ L^2([0,T];H^{2\alpha-2\beta}(\mathbb{T}^2))  $. 
	
	It remains to show the convergence of the nonlinear term. For any $ j\in \mathbb{Z} $, according to \eqref{decomposition} and  \eqref{tempered distribution}, it holds that
	\begin{align*}
		&\quad\Big|\int_{0}^{T}\left( \left\langle R_2 \theta_n,\left[\Lambda, \partial_1 \varphi\right]\Lambda^{-1} \theta_n\right\rangle -\left\langle R_1 \theta_n  ,\left[\Lambda, \partial_2 \varphi\right]\Lambda^{-1} \theta_n\right\rangle\right) \mathrm{d}t\\&\quad\quad-\int_{0}^{T}\left( \left\langle R_2 \theta,\left[\Lambda, \partial_1 \varphi\right]\Lambda^{-1} \theta\right\rangle -\left\langle R_1 \theta  ,\left[\Lambda, \partial_2 \varphi\right]\Lambda^{-1} \theta\right\rangle\right) \mathrm{d}t \Big|
		\\&\leq \left|\int_0^T\langle\nabla\cdot\left( \theta_n R^{\perp}\left(S_j \theta_n\right)\right) , \varphi \rangle\,\mathrm{d} t-\int_0^T \langle\nabla\cdot\left(\theta R^{\perp}\left(S_j \theta\right)\right) , \varphi \rangle \,\mathrm{d} t\right|\\&+\left|\int_0^T \langle\nabla\cdot\left( S_j\theta_n R^{\perp}\left(H_j \theta_n\right)\right) , \varphi \rangle \,\mathrm{d} t-\int_0^T\langle \nabla\cdot\left(S_j\theta R^{\perp}\left(H_j \theta\right)\right), \varphi \rangle\,\mathrm{d} t\right|\\&+\left|\int_0^T \langle\nabla\cdot\left( H_j\theta_n R^{\perp}\left(H_j \theta_n\right)\right), \varphi \rangle\,\mathrm{d} t \right|+\left|\int_0^T\langle \nabla\cdot\left(H_j\theta R^{\perp}\left(H_j \theta\right)\right), \varphi \rangle\,\mathrm{d}t \right|\\&=:I^n_{1j}+I^n_{2j}+I^n_{3j}+I_{4j}.
	\end{align*}
	The convergence of $ \theta_n $ in $ \mathbb{X}_{\alpha,\beta} $ implies the boundedness of $ L^2\left( [0,T];H^{2\alpha-2\beta}(\mathbb{T}^2)\right)$-norm of $ \{\theta_n\} $. Combining this with  \eqref{high-frequency} for $r=2\beta-2\alpha$ and the fact that  $\left\|H_j \theta_n\right\|_{H^{2\alpha-2\beta}(\mathbb{T}^2)} \leq \left\| \theta_n\right\|_{H^{2\alpha-2\beta}(\mathbb{T}^2)}, $  we have
	\begin{equation}\label{estimate-i3}
		\begin{aligned}
			I^n_{3j}& \leq C(\varphi)  \int_0^T\left\|H_j \theta_n\right\|_{H^{2\alpha-2\beta}(\mathbb{T}^2)}\left\|H_j \theta_n\right\|_{H^{2\beta-2\alpha-1}(\mathbb{T}^2)} \,\mathrm{d}t \\
			& \lesssim C(\varphi) 2^{- j(4\alpha+1-4\beta)}  \int_0^T\left\|H_j \theta_n\right\|_{H^{2\alpha-2\beta}(\mathbb{T}^2)}^2 \,\mathrm{d} t \\
			& \lesssim C(\varphi) 2^{- j(4\alpha+1-4\beta)}  \int_0^T\left\|\theta_n\right\|_{H^{2\alpha-2\beta}(\mathbb{T}^2)}^2 \,\mathrm{d} t \lesssim C(\varphi) 2^{- j(4\alpha+1-4\beta)}. 
		\end{aligned}
	\end{equation}
	Applying the same argument as in $  I^n_{3j}$, we can see that, \begin{equation}\label{estimate-i4}
		\lim\limits_{j\rightarrow \infty}	I_{4j} =0.
	\end{equation} 
	According to  \eqref{estimate-i3} and \eqref{estimate-i4}, for any $ \eta>0,$  we can choose a sufficiently large positive integer $ j_0,$ such that $ I^n_{3j_0}+I_{4j_0} \leq \eta $ for all $ n\geq 1. $ By \cite[Lemma 9.3]{Mar08_existence}, $ S_{j_0}\theta_n\rightarrow S_{j_0}\theta$ and $R^\perp( S_{j_0}\theta_n)\rightarrow R^\perp(S_{j_0}\theta)$ strongly in $ L^{2}([0,T];H^{1/2}(\mathbb{T}^2))$, which implies
	\begin{equation}\label{estimate-i12}
		\lim\limits_{n\rightarrow \infty}	I^n_{1j_0} =\lim\limits_{n\rightarrow \infty}	I^n_{2j_0}=0
	\end{equation} since $\theta_n\rightarrow \theta$ weakly in $ L^2([0,T];H^{2\alpha-2\beta}(\mathbb{T}^2)).$
	Hence \begin{equation*}
		\limsup_{n\rightarrow\infty}\left(I ^n_{1j_0}+I^n_{2j_0}+I^n_{3j_0}+I_{4j_0}\right) \leq \eta.
	\end{equation*}
	By the arbitrariness of $ \eta, $ we conclude that
	\begin{align*}
		\lim_{n\rightarrow \infty}&\int_{0}^{T}\left\langle R_2 \theta_n,\left[\Lambda, \partial_1 \varphi\right]\Lambda^{-1} \theta_n\right\rangle -\left\langle R_1 \theta_n  ,\left[\Lambda, \partial_2 \varphi\right]\Lambda^{-1} \theta_n\right\rangle \mathrm{d}t\\&=\int_{0}^{T} \left\langle R_2 \theta,\left[\Lambda, \partial_1 \varphi\right]\Lambda^{-1} \theta\right\rangle -\left\langle R_1 \theta  ,\left[\Lambda, \partial_2 \varphi\right]\Lambda^{-1} \theta\right\rangle\mathrm{d}t.
	\end{align*}
\end{proof}

\section{Upper Bound for Large Deviations}\label{section-upper}
In this section, we proceed with the proof of the  large deviations for \eqref{SPDE-SQG}. Thanks to the exponential tightness (Proposition \ref{prop-exponential-tightness}), it suffices to prove the weak large deviations in the sense that the upper bound holds for all compact sets in $ \mathbb{X}_{\alpha,\beta} $.

For every $ \psi\in C_b(H_w^{\alpha-2\beta}(\mathbb{T}^2))$ and $ \varphi \in C^{\infty}([0,T];H^\infty(\mathbb{T}^2))$,  
we define  $\mathcal{M}^{\psi,\varphi }:[0,T]\times\mathbb{X}_{\alpha,\beta}\rightarrow\mathbb{R}$ by
\begin{align*}
	\mathcal{M}^{\psi,\varphi }(t,\theta):=	F^t(\varphi,\theta )+\Lambda_0(\psi,\theta(0)),
\end{align*}
where $F^t(\varphi,\theta )$ is defined by  \eqref{the-map-F} with $ T $ replaced by $ t $.
Let $Q^{\psi,\varphi }:[0,T]\times\mathbb{X}_{\alpha,\beta}\rightarrow\mathbb{R}$ be defined by
\begin{equation*}
	Q^{\psi,\varphi }(t,\theta):=\exp\Big\{\varepsilon^{-1}\big( 	\mathcal{M}^{\psi,\varphi }(t,\theta)-\frac{1}{2}\int^t_0\|\varphi\|_{H^{2\beta}(\mathbb{T}^2)}^2\,\mathrm{d}s\big) \Big\}.  
\end{equation*}
\begin{proposition}[Upper bound]\label{proposition-upperbound}
	For every $\varepsilon,\delta(\varepsilon)>0$ and $m(\varepsilon)\in\mathbb{N}_+$, let $\theta_{\varepsilon,\delta(\varepsilon)}$ be a stochastic generalized Leray solution of \eqref{SPDE-SQG} in the sense of Definition \ref{def-stochastic-leray} with initial data $ \theta_{\varepsilon, \delta(\varepsilon)}(0)$ satisfying Assumption \ref{Gaussian-initial-data},  and let $\theta_{\varepsilon,m(\varepsilon)}$ be the solution of \eqref{Galerkin approximation equations-delta=0} with initial data $ \theta_{\varepsilon, m(\varepsilon)}(0)\sim \mathcal{G}(0,\varepsilon P_{m(\varepsilon)}/2)$.  
	\item	(i) Let $\mu_{\varepsilon}=\mu_{\varepsilon,\delta(\varepsilon)}$ be the laws of $\theta_{\varepsilon,\delta(\varepsilon)}$ on $\mathbb{X}_{\alpha,\beta}$. Assume that the scaling regime \eqref{scaling-delta} holds for $(\varepsilon,\delta(\varepsilon))$, 
	then for any closed set $F\subset\mathbb{X}_{\alpha,\beta}$, 
	\begin{equation}\label{upper-bound}
		\limsup_{\varepsilon\rightarrow0}\varepsilon\log\mu_{\varepsilon}(F)\leq-\inf_{\theta\in F}\mathcal{I}(\theta). 	
	\end{equation}
	\item(ii)  Let $\mu_{\varepsilon}=\mu_{\varepsilon,m(\varepsilon)}$ be the laws of $\theta_{\varepsilon,m(\varepsilon)}$ on $\mathbb{X}_{\alpha,\beta}$. Assume that the scaling regime \eqref{scaling-m} holds for $(\varepsilon,m(\varepsilon))$, then for any closed set $F\subset\mathbb{X}_{\alpha,\beta}$, the upper bound \eqref{upper-bound} holds as well.
\end{proposition}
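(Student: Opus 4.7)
The plan is to combine exponential tightness with an exponential supermartingale Chebyshev estimate, and then identify the limit through the variational formula of Proposition \ref{prop-variational-characterization-rate-function}. Since Proposition \ref{prop-exponential-tightness} gives exponential tightness of $\{\mu_{\varepsilon,\delta}\}$ in $\mathbb{X}_{\alpha,\beta}$ under \eqref{scaling-delta} (and the same reasoning yields it for $\{\mu_{\varepsilon,m}\}$ under \eqref{scaling-m}), it is enough to prove the weak upper bound on every compact $K\subset\mathbb{X}_{\alpha,\beta}$; the full closed-set statement then follows by intersecting with the exponentially tight exhaustion and sending the truncation parameter to infinity.

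The first key step is an exponential martingale identification. For any $\varphi\in C^\infty([0,T];H^\infty(\mathbb{T}^2))$, Definition \ref{def-stochastic-leray}(ii) gives, after rearrangement,
\[
F^t(\varphi,\theta_{\varepsilon,\delta})=\sqrt{\varepsilon}\int_0^t\langle\Lambda^{2\beta}\varphi(s),\sqrt{Q_\delta}\,\mathrm{d}W(s)\rangle,
\]
so $\varepsilon^{-1}F^t(\varphi,\theta_{\varepsilon,\delta})$ is a continuous martingale with quadratic variation $\varepsilon^{-1}\int_0^t\|\Lambda^{2\beta}\sqrt{Q_\delta}\varphi(s)\|_H^2\,\mathrm{d}s$. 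Smoothness of $\varphi$ ensures boundedness of the integrand, so Novikov's condition gives the associated exponential martingale. Since the eigenvalues of $Q_\delta$ lie in $(0,1]$, one has $\|\Lambda^{2\beta}\sqrt{Q_\delta}\varphi\|_H^2\le \|\varphi\|_{H^{2\beta}(\mathbb{T}^2)}^2$, so that
\[
\mathbb{E}\Bigl[\exp\Bigl(\varepsilon^{-1}F^t(\varphi,\theta_{\varepsilon,\delta})-\tfrac{1}{2\varepsilon}\int_0^t\|\varphi(s)\|_{H^{2\beta}(\mathbb{T}^2)}^2\,\mathrm{d}s\Bigr)\,\Big|\,\theta_{\varepsilon,\delta}(0)\Bigr]\le 1.
\]
Using independence of $\theta_{\varepsilon,\delta}(0)$ and $W$, this implies $\mathbb{E}[Q^{\psi,\varphi}(T,\theta_{\varepsilon,\delta})]\le \mathbb{E}\bigl[e^{\varepsilon^{-1}\Lambda_0(\psi,\theta_{\varepsilon,\delta}(0))}\bigr]$. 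The large deviations for the Gaussian initial datum together with Varadhan's lemma applied to the bounded continuous functional $\psi-\lambda(\psi)$ then give
\[
\limsup_{\varepsilon\to 0}\varepsilon\log\mathbb{E}\bigl[e^{\varepsilon^{-1}\Lambda_0(\psi,\theta_{\varepsilon,\delta}(0))}\bigr]=\sup_{\phi\in H^{\alpha-2\beta}(\mathbb{T}^2)}\{\psi(\phi)-\mathcal{I}_0(\phi)\}-\lambda(\psi)=0.
\]

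To close the argument on a compact $K$, I fix $\eta>0$ and for each $\theta^\star\in K$ select $(\psi_{\theta^\star},\varphi_{\theta^\star})$ so that $\Lambda_0(\psi_{\theta^\star},\theta^\star(0))+\Lambda_1^T(\varphi_{\theta^\star},\theta^\star)\ge \min(\mathcal{I}(\theta^\star),\eta^{-1})-\eta$, possible by Proposition \ref{prop-variational-characterization-rate-function}. The continuity of $\theta\mapsto \Lambda_0(\psi_{\theta^\star},\theta(0))+\Lambda_1^T(\varphi_{\theta^\star},\theta)$ on $\mathbb{X}_{\alpha,\beta}$---which is the content of the argument used to prove Lemma \ref{lemma-lsc} for fixed test functions---yields an open neighborhood $U_{\theta^\star}\ni\theta^\star$ on which the same inequality persists with an additional slack of $\eta$. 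By compactness I extract a finite subcover $U_{\theta_1^\star},\dots,U_{\theta_N^\star}$; Chebyshev's inequality with weight $Q^{\psi_{\theta_i^\star},\varphi_{\theta_i^\star}}(T,\cdot)$ applied to each $U_{\theta_i^\star}$, followed by the principle of the largest term in the union bound, yields
\[
\limsup_{\varepsilon\to 0}\varepsilon\log\mu_{\varepsilon,\delta}(K)\le -\min\!\bigl(\inf_{\theta\in K}\mathcal{I}(\theta),\,\eta^{-1}\bigr)+2\eta,
\]
and sending $\eta\to 0$ concludes. Part (ii) is obtained by the same scheme applied to \eqref{Galerkin approximation equations-delta=0}: testing the Galerkin equation with $P_m\varphi$ yields $F^t(P_m\varphi,\theta_{\varepsilon,m})=\sqrt{\varepsilon}\int_0^t\langle\Lambda^{2\beta}P_m\varphi,\mathrm{d}W\rangle$, the contraction $\|\Lambda^{2\beta}P_m\varphi\|_H\le\|\varphi\|_{H^{2\beta}(\mathbb{T}^2)}$ substitutes for the $\sqrt{Q_\delta}$ estimate, and density of $\{P_m\varphi\}$ in $L^2([0,T];H^{2\beta}(\mathbb{T}^2))$ combined with continuity of $\Lambda_1^T$ in its first argument recovers the full supremum in Proposition \ref{prop-variational-characterization-rate-function}.

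The main obstacle is the continuity used in the covering step. Propagating pointwise near-optimality to a uniform estimate on a neighborhood requires continuity of $\theta\mapsto \Lambda_1^T(\varphi,\theta)$ on $\mathbb{X}_{\alpha,\beta}$ for fixed smooth $\varphi$, and the delicate ingredient is the quadratic nonlinear pairing $\int_0^T\langle R_j\theta,[\Lambda,\partial_i\varphi]\Lambda^{-1}\theta\rangle\,\mathrm{d}t$. In the negative-Sobolev regime (particularly $\beta=\alpha/2+1/4$, where $\theta\in H^{-1/2}$), continuous dependence on $\theta$ is not immediate and relies on the Littlewood--Paley splitting \eqref{decomposition}, the commutator estimate \eqref{high-frequency}, and the compactness argument of Lemma \ref{lemma-lsc}. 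Once this continuity is in hand, the remainder is standard Freidlin--Wentzell machinery adapted to the stochastic-Leray setting, and the uniform exponential bounds hinge only on the one-sided operator contraction property of $\sqrt{Q_\delta}$ (respectively $P_m$) rather than on the scaling regimes \eqref{scaling-delta}, \eqref{scaling-m}, which enter only through the initial-data large deviations.
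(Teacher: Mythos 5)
Your proposal is correct and follows essentially the same route as the paper: reduction to compact sets via exponential tightness, the exponential supermartingale built from $Q^{\psi,\varphi}$ together with the contraction property of $\sqrt{Q_\delta}$ (resp.\ $P_m$), Varadhan's lemma for the Gaussian initial datum, and the variational characterization of $\mathcal{I}$ from Proposition \ref{prop-variational-characterization-rate-function}. The only difference is cosmetic: where you close the argument on a compact set by an explicit near-optimal cover and the principle of the largest term, the paper invokes the minimax lemma of Kipnis--Landim to swap the infimum over $(\psi,\varphi)$ with the infimum over $F$ --- these are the same argument, and both rest on the continuity of $\theta\mapsto\Lambda_1^T(\varphi,\theta)$ for fixed $\varphi$, which is exactly the content of \eqref{convergence-lambda-1} in Lemma \ref{lemma-lsc}, as you correctly identify.
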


\begin{proof} We only prove (i), and (ii) could be handled in a similar way as (i). Due to Proposition \ref{prop-exponential-tightness}, it suffices to prove \eqref{upper-bound} for every compact $ F $ in $\mathbb{X}_{\alpha,\beta}  $.
	
	By Assumption \ref{Gaussian-initial-data}, Proposition \ref{prop-ldp-initial}, and Varadhan's integration lemma (see, for example, \cite[Theorem 4.3.1]{DZ10_LDP}), 
	\begin{equation}\label{varadhan}
		\begin{aligned}
			\limsup_{\varepsilon\rightarrow0}\varepsilon\log\int_{\mathbb{X}_{\alpha,\beta}}Q^{\psi,\varphi }(0,\theta)\mu_{\varepsilon}(\mathrm{d}\theta)&=	\limsup_{\varepsilon\rightarrow0}\varepsilon\log\int_{\mathbb{X}_{\alpha,\beta}}\exp\{\varepsilon^{-1}\Lambda_0(\psi,\theta(0))\}\mu_{\varepsilon}(\mathrm{d}\theta)\\&=\limsup_{\varepsilon\rightarrow0}\Big(\varepsilon\log\mathbb{E}\exp\{\varepsilon^{-1}\psi(\theta_{\varepsilon}(0))\}-\lambda(\psi)\Big)=0. 
		\end{aligned}
	\end{equation}
	Since $\theta_{\varepsilon,\delta(\varepsilon)}$ is a  solution of \eqref{SPDE-SQG} in the sense of Definition \ref{def-stochastic-leray} with initial data $ \theta_{\varepsilon, \delta(\varepsilon)}(0)$, the stochastic process
	$\mathcal{M}^{\psi,\varphi }(t,\theta_{\varepsilon,\delta(\varepsilon)})$ is a martingale. Due to the contraction property of $ Q_{\delta(\varepsilon)} $ on $ H^{2\beta}(\mathbb{T}^2)$,  the quadratic variation of $\mathcal{M}^{\psi,\varphi }(\cdot,\theta_{\varepsilon,\delta(\varepsilon)})$ satisfies
	\begin{equation*}
		\langle\langle\mathcal{M}^{\psi,\varphi }(\cdot,\theta_{\varepsilon,\delta(\varepsilon)})\rangle\rangle_t=\varepsilon\int_0^t\|\sqrt{Q_{\delta(\varepsilon)}}\Lambda^{2\beta}\varphi\|_{H}^2\,\mathrm{d}s\leq\varepsilon\int_0^t\|\varphi\|_{H^{2\beta}(\mathbb{T}^2)}^2\,\mathrm{d}s. 
	\end{equation*}
	It follows that $Q^{\psi,\varphi }(t,\theta_{\varepsilon,\delta(\varepsilon)})$ is a supermartingale. For every compact set $F\subset \mathbb{X}_{\alpha,\beta}$, we have 
	\begin{equation}\label{upper-bound-estimate-1}
		\begin{aligned}
			&\mu_{\varepsilon}(F)\leq\sup_{\theta\in F}(Q^{\psi,\varphi }(T,\theta))^{-1}\int_FQ^{\psi,\varphi }(T,\theta)\mu_{\varepsilon}(\mathrm{d}\theta)\\
			\leq&\sup_{\theta\in F}(Q^{\psi,\varphi }(T,\theta))^{-1}\int_{\mathbb{X}_{\alpha,\beta}}Q^{\psi,\varphi }(T,\theta)\mu_{\varepsilon}(\mathrm{d}\theta)\\
			\leq&\sup_{\theta\in F}(Q^{\psi,\varphi }(T,\theta))^{-1}\int_{\mathbb{X}_{\alpha,\beta}}Q^{\psi,\varphi }(0,\theta)\mu_{\varepsilon}(\mathrm{d}\theta)\\
			\leq&\exp\Big\{-\varepsilon^{-1}\inf_{\theta\in F}\Big(	\mathcal{M}^{\psi,\varphi }(T,\theta)-\frac{1}{2}\int^T_0\|\varphi\|_{H^{2\beta}(\mathbb{T}^2)}^2\,\mathrm{d}s\Big)\Big\}\int_{\mathbb{X}_{\alpha,\beta}}Q^{\psi,\varphi }(0,\theta)\mu_{\varepsilon}(\mathrm{d}\theta). 
		\end{aligned}
	\end{equation}
	In \eqref{upper-bound-estimate-1}, we used the non-negativity of  
	$ Q^{\psi,\varphi } $ in the first and second inequalities, and properties of supermartingales have been employed in the third inequality. The definition of $ Q^{\psi,\varphi } $ 
	has been used to derive the last inequality. 
	
	Taking the logarithmic function in \eqref{upper-bound-estimate-1} yields
	\begin{align*}
		\varepsilon\log\mu_{\varepsilon}(F)\leq&-\inf_{\theta\in F}\Big\{\Lambda^T_1(\varphi,\theta)+\Lambda_0(\psi,\theta(0))\Big\}+\varepsilon\log\int_{\mathbb{X}_{\alpha,\beta}}Q^{\psi,\varphi }(0,\theta)\mu_{\varepsilon}(\mathrm{d}\theta). 
	\end{align*}
	Letting $ \varepsilon\rightarrow 0 $ and using \eqref{varadhan}, we conclude that
	\begin{align*}
		\limsup_{\varepsilon\rightarrow0}\varepsilon\log\mu_{\varepsilon}(F)\leq&\inf_{\substack{\varphi\in C^{\infty}([0,T];H^\infty(\mathbb{T}^2))\\\psi\in C_b(H_w^{\alpha-2\beta}(\mathbb{T}^2))}}\Big(-\inf_{\theta\in F}\Big\{\Lambda_1^T(\varphi,\theta)+\Lambda_0(\psi,\theta(0))\Big\}\Big)\\
		=&-\sup_{\substack{\varphi\in C^{\infty}([0,T];H^\infty(\mathbb{T}^2))\\\psi\in C_b(H_w^{\alpha-2\beta}(\mathbb{T}^2))}}\inf_{\theta\in F}\Big\{\Lambda^T_1(\varphi,\theta)+\Lambda_0(\psi,\theta(0))\Big\}.
	\end{align*}
	Since $F$ is compact in $\mathbb{X}_{\alpha,\beta}$, by \cite[Appendix 2, Lemma 3.2]{KL99_IPS}, one can swap the above infimum and supremum. Finally, Proposition \ref{prop-variational-characterization-rate-function} implies that 
	\begin{align*}
		\limsup_{\varepsilon\rightarrow0}\varepsilon\log\mu_{\varepsilon}(F)\leq-\inf_{\theta\in F}\mathcal{I}(\theta).
	\end{align*}
\end{proof}

\section{Lower Bound for Large Deviations}\label{section-lower}
In this section, we will prove a restricted large deviations lower bound using the entropy method as described in \cite{Mariani10} (see Lemma \ref{lemma-entropy-method}). Due to the lack of well-posedness of the skeleton equation \eqref{PDE-SQG}, it is challenging to match the upper and lower bounds in $\mathbb{X}_{\alpha,\beta}$. Instead, we will match the upper and lower bounds within the $ \mathcal{I}$-closure of the weak-strong uniqueness class defined in Definition \ref{def-weak-strong-uniqueness-intro}. 

Let $\mathcal{C}_0$ be a weak-strong uniqueness class of \eqref{PDE-SQG} and let  $\mathcal{C}$ be its $\mathcal{I}$-closure.  We will take $E=\mathbb{X}_{\alpha,\beta}$ and $I=\mathcal{I}|_{\mathcal{C}_0}$ in Lemma \ref{lemma-entropy-method} to obtain the lower bound of large deviations. Moreover, when utilizing Lemma \ref{lemma-entropy-method}, we focus only on the case $ \theta\in \mathcal{C}_0 \subset \mathbb{X}_{\alpha,\beta}$ with $ \mathcal{I}(\theta)< \infty $. Otherwise, the condition (ii) in Lemma \ref{lemma-entropy-method} holds obviously, and one can take $\pi^{\varepsilon, \theta}=\delta_\theta$ to verify the condition (i).

\begin{proposition}[Restricted lower bound]\label{prop-restricted-lower-bound}
	For every $\varepsilon,\delta(\varepsilon)>0$ and $m(\varepsilon)\in\mathbb{N}_+$, let $\theta_{\varepsilon,\delta(\varepsilon)}$ be a stochastic generalized Leray solution of \eqref{SPDE-SQG} in the sense of Definition \ref{def-stochastic-leray} with initial data $ \theta_{\varepsilon, \delta(\varepsilon)}(0)$ satisfying Assumption \ref{Gaussian-initial-data},  and let $\theta_{\varepsilon,m(\varepsilon)}$ be the solution of \eqref{Galerkin approximation equations-delta=0} with initial data $ \theta_{\varepsilon, m(\varepsilon)}(0)\sim \mathcal{G}(0,\varepsilon P_{m(\varepsilon)}/2)$. Let $ \mathcal{C}_0\subset \mathbb{X}_{\alpha,\beta} $ be a weak-strong uniqueness class of \eqref{PDE-SQG} and let $\mathcal{C}=\overline{\mathcal{C}_0}^{\mathcal{I}}$. 	
	
	\item	(i) Let $\mu_{\varepsilon}=\mu_{\varepsilon,\delta(\varepsilon)}$ be the law of $\theta_{\varepsilon,\delta(\varepsilon)}$ on $\mathbb{X}_{\alpha,\beta}$. Assume that the scaling regime \eqref{scaling-delta} holds for $(\varepsilon,\delta(\varepsilon))$, 
	then for any open set $G\subset\mathbb{X}_{\alpha,\beta}$, 
	\begin{equation}\label{lower-bound}
		\liminf_{\varepsilon\rightarrow0}\varepsilon\log\mu_{\varepsilon}(G)\geq-\inf_{\theta\in G\cap \mathcal{C}}\mathcal{I}(
		\theta). 	
	\end{equation}
	\item(ii)  Let $\mu_{\varepsilon}=\mu_{\varepsilon,m(\varepsilon)}$ be the law of $\theta_{\varepsilon,m(\varepsilon)}$ on $\mathbb{X}_{\alpha,\beta}$. Assume that the scaling regime \eqref{scaling-m} holds for $(\varepsilon,m(\varepsilon))$, then for any open set $G\subset\mathbb{X}_{\alpha,\beta}$, the restricted lower bound \eqref{lower-bound} holds as well.
	
\end{proposition}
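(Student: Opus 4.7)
The plan is to apply the entropy method (Lemma \ref{lemma-entropy-method}) with $E = \mathbb{X}_{\alpha,\beta}$ and rate function $\mathcal{I}$. Since the condition is vacuous when $\mathcal{I}(\theta) = \infty$, I need only fix $\theta \in \mathcal{C}_0$ with $\mathcal{I}(\theta) < \infty$ and construct a sequence of probability measures $\pi_{\varepsilon,\theta}$ on $\mathbb{X}_{\alpha,\beta}$ satisfying (i) $\pi_{\varepsilon,\theta} \to \delta_\theta$ weakly and (ii) $\limsup_{\varepsilon \to 0} \varepsilon \, \mathrm{Ent}(\pi_{\varepsilon,\theta}\,|\,\mu_\varepsilon) \leq \mathcal{I}(\theta)$. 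Given $\theta \in \mathcal{C}_0$, Remark \ref{uniqueness-of-g} supplies a unique control $g^\theta \in L^2([0,T];H)$ with $\|g^\theta\|_{L^2([0,T];H)}^2 = 2\mathcal{I}_{dyna}(\theta)$ and such that $\theta$ solves \eqref{PDE-SQG} driven by $g^\theta$. I would take $\pi_{\varepsilon,\theta}$ to be the law on $\mathbb{X}_{\alpha,\beta}$ of the controlled stochastic equation obtained from \eqref{SPDE-SQG} (resp.\ \eqref{Galerkin approximation equations-delta=0}) by adding the deterministic drift $\Lambda^{2\beta} g^\theta$ and, for the initial layer, conditioning on an arbitrarily small weak neighbourhood of $\theta(0)$ in $H^{\alpha-2\beta}(\mathbb{T}^2)$ (or equivalently tilting by an exponential factor matching the Gaussian rate function $\mathcal{I}_0$).

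The entropy estimate is a Girsanov computation. Since only the drift and the initial data have been shifted, the Radon--Nikodym density splits as a product of an exponential martingale factor associated with the drift shift and a factor from the initial distribution. Taking expectations under $\pi_{\varepsilon,\theta}$ yields
\begin{equation*}
\varepsilon \, \mathrm{Ent}(\pi_{\varepsilon,\theta}\,|\,\mu_\varepsilon) = \tfrac{1}{2}\, \mathbb{E}_{\pi_{\varepsilon,\theta}}\!\int_0^T \|g^\theta(s)\|_H^2\,\mathrm{d}s \; + \; \varepsilon\, \mathrm{Ent}(\pi_{\varepsilon,\theta}^0\,|\,\mu_\varepsilon^0),
\end{equation*}
and the right-hand side converges to $\mathcal{I}_{dyna}(\theta) + \mathcal{I}_0(\theta(0)) = \mathcal{I}(\theta)$ as $\varepsilon \to 0$ thanks to Proposition \ref{prop-ldp-initial} (applied to the initial-data contribution) and the fact that $g^\theta$ is deterministic.

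The main obstacle is verifying the weak convergence $\pi_{\varepsilon,\theta} \to \delta_\theta$. Exponential tightness of $\mu_\varepsilon$ (Proposition \ref{prop-exponential-tightness}) transfers to $\pi_{\varepsilon,\theta}$ via the entropy bound, so every subsequence has a weak limit. Along any converging subsequence, passing to the limit in the controlled equation--- using the Littlewood--Paley splitting \eqref{decomposition} and the commutator estimate \eqref{high-frequency} exactly as in the proof of Lemma \ref{lemma-lsc}---shows that any limit point $\bar\theta$ is a weak solution of the skeleton equation driven by $g^\theta$ with $\bar\theta(0) = \theta(0)$. Crucially, passing to the $\varepsilon \to 0$ limit in the pathwise energy inequality \eqref{pathwise energy inequality} (whose stochastic integral vanishes in probability because of the $\varepsilon^{1/2}$ prefactor, and whose noise trace term also vanishes under the scaling \eqref{scaling-delta} or \eqref{scaling-m}) shows that $\bar\theta$ satisfies the $H^{\alpha-2\beta}$-energy inequality \eqref{energy-ineq-skeleton}. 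Since $\theta \in \mathcal{C}_0$ is in the weak-strong uniqueness class, Definition \ref{def-weak-strong-uniqueness} forces $\bar\theta = \theta$, identifying the limit uniquely. This is precisely where the weak-strong uniqueness hypothesis is indispensable; it is the technical heart of the argument and the step where the restricted nature of the lower bound enters.

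Finally, the passage from $\mathcal{C}_0$ to $\mathcal{C} = \overline{\mathcal{C}_0}^{\mathcal{I}}$ is by a diagonal extraction: for $\theta \in \mathcal{C}$ pick a sequence $\theta^{(n)} \in \mathcal{C}_0$ with $\theta^{(n)} \to \theta$ in $\mathbb{X}_{\alpha,\beta}$ and $\mathcal{I}(\theta^{(n)}) \to \mathcal{I}(\theta)$, then set $\pi_{\varepsilon,\theta} := \pi_{\varepsilon, \theta^{(n(\varepsilon))}}$ for a sufficiently slowly growing $n(\varepsilon) \to \infty$ so that both the weak convergence and the entropy bound persist. Combining this with the restricted form of Lemma \ref{lemma-entropy-method} yields the lower bound \eqref{lower-bound} with the infimum taken over $G \cap \mathcal{C}$.
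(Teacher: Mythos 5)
Your construction is essentially the paper's: the same entropy method (Lemma \ref{lemma-entropy-method}), the same Girsanov tilting of both the drift (by $\Lambda^{2\beta}g^{\theta}$) and the initial Gaussian, the same entropy computation giving $\tfrac12\|g^{\theta}\|_{L^2([0,T];H)}^2+\mathcal{I}_0(\theta(0))=\mathcal{I}(\theta)$, and the same identification of limit points via tightness, passage to the limit in the weak formulation and in the energy inequality, and then weak-strong uniqueness. The one place you genuinely diverge is the passage from $\mathcal{C}_0$ to $\mathcal{C}=\overline{\mathcal{C}_0}^{\mathcal{I}}$: you propose a diagonal extraction $\pi_{\varepsilon,\theta}:=\pi_{\varepsilon,\theta^{(n(\varepsilon))}}$, which works but forces you to quantify the rate of the weak convergence $\pi_{\varepsilon,\theta^{(n)}}\to\delta_{\theta^{(n)}}$ uniformly enough in $n$ (and hence to metrize weak convergence of measures on $\mathbb{X}_{\alpha,\beta}$, which carries weak topologies and is only metrizable on bounded sets). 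The paper avoids this entirely with a purely deterministic observation: for any open $G$ and any $\theta^0\in G\cap\mathcal{C}$ nearly attaining the infimum, the recovery sequence $\theta^{(n)}\in\mathcal{C}_0$ eventually lies in $G$ (openness of $G$) and has $\mathcal{I}(\theta^{(n)})\to\mathcal{I}(\theta^0)$, whence $\inf_{G\cap\mathcal{C}_0}\mathcal{I}=\inf_{G\cap\mathcal{C}}\mathcal{I}$ and the lower bound on $\mathcal{C}_0$ already implies \eqref{lower-bound}. The latter is both shorter and immune to the metrizability issue, so you may want to replace your diagonal step by this infimum identity; everything else in your argument matches the paper's proof.
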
 

\begin{lemma}\label{lemma-entropy-estimate} 	For every $\theta\in \mathcal{C}_0$ with $ \mathcal{I}(\theta)< \infty $, let $ g\in L^2([0,T];H)$ be the control such that $ \theta $ solves \eqref{PDE-SQG} weakly.
	For every $ \varepsilon>0 $ and $m(\varepsilon)\in\mathbb{N}_+$ satisfying the  scaling regime \eqref{scaling-m}, let $\theta_{\varepsilon,m(\varepsilon)}$ and $\mu_{\varepsilon,m(\varepsilon)}$ be as in Proposition \ref{prop-restricted-lower-bound}. Then there exists a family of probability measure  $\{\pi_{\varepsilon,m(\varepsilon), \theta}\}$ on $ \mathbb{X}_{\alpha,\beta} $ that satisfies the entropy inequality
	\begin{equation}\label{entropy-estimate}
		\limsup _{\varepsilon\rightarrow 0} \operatorname{Ent}\left(\pi_{\varepsilon,m(\varepsilon), \theta} \mid \mu_{\varepsilon,m(\varepsilon)}\right) \leq \mathcal{I}(\theta).
	\end{equation}
	
\end{lemma}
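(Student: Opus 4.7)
The plan is to build $\pi_{\varepsilon,m,\theta}$ as a change of measure combining a Cameron-Martin shift of the initial Gaussian with a Girsanov transform of the driving Brownian motion, so that the canonical process under $\pi_{\varepsilon,m,\theta}$ approaches $\theta$ as $\varepsilon\to 0$. By Remark \ref{uniqueness-of-g} the hypothesis $\mathcal{I}(\theta)<\infty$ supplies a unique control $g=g^\theta\in L^2([0,T];H)$ with $\mathcal{I}_{dyna}(\theta)=\tfrac12\|g\|^2_{L^2([0,T];H)}$ such that $\theta$ solves \eqref{PDE-SQG} weakly. I then define $\pi_{\varepsilon,m,\theta}$ by
\begin{equation*}
\frac{\mathrm{d}\pi_{\varepsilon,m,\theta}}{\mathrm{d}\mu_{\varepsilon,m}}=\exp\Bigl\{\tfrac{2}{\varepsilon}\langle P_m\theta(0),\theta_{\varepsilon,m}(0)\rangle_{H^{\alpha-2\beta}(\mathbb{T}^2)}-\tfrac{1}{\varepsilon}\|P_m\theta(0)\|^2_{H^{\alpha-2\beta}(\mathbb{T}^2)}+\tfrac{1}{\sqrt{\varepsilon}}\int_0^T\langle P_m g,\mathrm{d}W\rangle-\tfrac{1}{2\varepsilon}\int_0^T\|P_m g\|_H^2\,\mathrm{d}t\Bigr\}.
\end{equation*}
Novikov's condition is trivial because $g$ is deterministic, so $\tilde W:=W-\varepsilon^{-1/2}\int_0^{\cdot}P_m g\,\mathrm{d}s$ is a cylindrical Brownian motion under $\pi_{\varepsilon,m,\theta}$, and the canonical process $\theta_{\varepsilon,m}$ solves the tilted Galerkin equation obtained from \eqref{Galerkin approximation equations-delta=0} by adding the drift $P_m\Lambda^{2\beta}g$, started from $\mathcal{G}(P_m\theta(0),\varepsilon P_m/2)$.

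The entropy computation then splits cleanly. Taking $\mathbb{E}^{\pi_{\varepsilon,m,\theta}}$ of the logarithm of the density, the Cameron-Martin piece contributes $\varepsilon^{-1}\|P_m\theta(0)\|^2_{H^{\alpha-2\beta}}$ via the mode-by-mode Gaussian variances identified in the proof of Lemma \ref{exponential-estimate-lemma1}, while the Girsanov piece contributes $\tfrac{1}{2\varepsilon}\|P_m g\|^2_{L^2([0,T];H)}$ using $\mathbb{E}^{\pi_{\varepsilon,m,\theta}}[\int_0^T\langle P_m g,\mathrm{d}W\rangle]=\varepsilon^{-1/2}\|P_m g\|^2_{L^2([0,T];H)}$. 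Multiplying by $\varepsilon$ and using $P_m\theta(0)\to\theta(0)$ in $H^{\alpha-2\beta}(\mathbb{T}^2)$ and $P_m g\to g$ in $L^2([0,T];H)$ as $m\to\infty$, the scaling regime \eqref{scaling-m} yields
\begin{equation*}
\limsup_{\varepsilon\to 0}\varepsilon\operatorname{Ent}(\pi_{\varepsilon,m,\theta}\mid\mu_{\varepsilon,m})\leq\|\theta(0)\|^2_{H^{\alpha-2\beta}(\mathbb{T}^2)}+\tfrac12\|g\|^2_{L^2([0,T];H)}=\mathcal{I}(\theta),
\end{equation*}
which is the bound demanded by Lemma \ref{lemma-entropy-method}.

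The remaining, and main, task is the weak convergence $\pi_{\varepsilon,m,\theta}\to\delta_\theta$ in $\mathbb{X}_{\alpha,\beta}$. I would first re-run the exponential moment estimates of Lemmas \ref{exponential-estimate-lemma1}--\ref{exponential-estimate-lemma3} on the tilted Galerkin equation; the extra drift $P_m\Lambda^{2\beta}g$ is absorbed by Young's inequality using $g\in L^2([0,T];H)$ and preserves the exponential tightness. Along any subsequential limit $\hat\theta$, the vanishing of the noise term under \eqref{scaling-m} together with the Littlewood-Paley passage-to-the-limit argument from Lemma \ref{lemma-lsc} identifies $\hat\theta$ as a weak solution of \eqref{PDE-SQG} with initial data $\theta(0)$ and control $g$ in the sense of Definition \ref{def-weak-solution-skeleton-equation}. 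The delicate step, and the main obstacle I anticipate, is to verify the $H^{\alpha-2\beta}$-energy inequality \eqref{energy-ineq-skeleton} for $\hat\theta$: this requires passing to the limit in the pathwise $H^{\alpha-2\beta}$-energy balance of the tilted Galerkin process, combining lower semi-continuity of the $H^{\alpha-2\beta}$- and $H^{2\alpha-2\beta}$-norms, convergence of the drift pairing $\int_0^t\langle\Lambda^{2\alpha-2\beta}\theta_{\varepsilon,m},P_m g\rangle\,\mathrm{d}s$, and vanishing of both the stochastic integral and the Hilbert-Schmidt contribution $\tfrac{\varepsilon}{2}\|\Lambda^\alpha P_m\|_{HS}^2 t$ under \eqref{scaling-m}. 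Once this inequality is secured, Definition \ref{def-weak-strong-uniqueness} applied to $\theta\in\mathcal{C}_0$ forces $\hat\theta=\theta$, so every subsequential limit coincides with $\theta$ and $\pi_{\varepsilon,m,\theta}\to\delta_\theta$ weakly, completing the verification of the conditions of Lemma \ref{lemma-entropy-method}.
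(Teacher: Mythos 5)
Your construction is essentially the paper's own: a Girsanov tilt by the control $g$ combined with a Cameron--Martin shift of the initial Gaussian, followed by a direct computation of the relative entropy, yielding $\varepsilon\operatorname{Ent}\le\|P_m\theta(0)\|_{H^{\alpha-2\beta}}^2+\tfrac12\|P_mg\|_{L^2([0,T];H)}^2\le\mathcal{I}(\theta)$. The only cosmetic difference is that the paper defines the tilted measure on the underlying probability space and bounds $\operatorname{Ent}(\pi_{\varepsilon,m,\theta}\mid\mu_{\varepsilon,m})\le\operatorname{Ent}(\mathbb{P}\mid\overline{\mathbb{P}})$ rather than writing the density directly on path space, and it defers the weak convergence $\pi_{\varepsilon,m,\theta}\to\delta_\theta$ (which you also sketch) to a separate lemma.
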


\begin{proof}
	Let $\left(\Omega, \mathcal{F},\{\mathcal{F}(t)\}_{t \in[0, T]}, \overline{\mathbb{P}}\right)$ be a stochastic basis and $ \overline{W}$ be a cylindrical Wiener process on $ H^{\alpha-2\beta}(\mathbb{T}^2)$ such that  
	$\bar{\theta}_{\varepsilon,m(\varepsilon)}$ is a solution of \eqref{Galerkin approximation equations-delta=0} with initial data $\bar{\theta}_{\varepsilon,m(\varepsilon)}(0) \sim\mathcal{G}(0,\varepsilon P_{m(\varepsilon)}/2)$  and $\bar{\theta}_{\varepsilon,m(\varepsilon)}(0)$ is independent of $\overline{W}$. For any two measures  $ \mu, \nu $ on $ \Omega$, we denote by $\frac{\mathrm{d}\mu}{\mathrm{d}\nu}$ the Radon-Nikodym derivative of $ \mu $ with respect to $ \nu. $
	Now we define the $ H $-valued random variable $Y_0^{\varepsilon,m(\varepsilon)} $ by
	$$
	Y_0^{\varepsilon,m(\varepsilon)} :=\frac{\mathrm{d}\mathcal{G}(P_{m(\varepsilon)}\theta(0),\varepsilon P_{m(\varepsilon)}/2)}{\mathrm{d} \mathcal{G}(0,\varepsilon P_{m(\varepsilon)}/2)}\left(\bar{\theta}_{\varepsilon,m(\varepsilon)}(0)\right),
	$$
	and define the stochastic process $Z^{\varepsilon,m(\varepsilon)}$ by
	\begin{equation}\label{def-of-Z}
		Z^{\varepsilon,m(\varepsilon)}(t):=\exp \left(-\frac{1}{\sqrt{\varepsilon}} \int_0^t\left\langle P_{m(\varepsilon)}g(s), \,\mathrm{d}\overline{W}(s)\right\rangle-\frac{1}{2 \varepsilon} \int_0^t\left\|P_{m(\varepsilon)} g(s)\right\|_{H}^2 \,\mathrm{d} s\right) Y_0^{\varepsilon,m(\varepsilon)}.
	\end{equation}
	Let $ \mathbb{P} $ be the probability measure on $\left(\Omega, \mathcal{F}\right) $ such that $Z^{\varepsilon,m(\varepsilon)}(T)=\frac{\mathrm{d} \mathbb{P}}{\mathrm{d}\overline{\mathbb{P}}}.$
	By Cameron-Martin theorem,   $W(t):=\overline{W}(t)+\varepsilon^{-1 / 2}\int_0^t P_{m(\varepsilon)}g(s)\,\mathrm{d}s$ is a cylindrical Wiener process on $\left(\Omega, \mathcal{F},\mathbb{P}\right)$ and
	$\bar{\theta}_{\varepsilon,m(\varepsilon)}$ is a solution of 
	\begin{equation}\label{stochastic-control-equation}
		\begin{aligned}
			\mathrm{d} \bar{\theta}_{\varepsilon,m(\varepsilon)}(t)=&-\Lambda^{2\alpha} \bar{\theta}_{\varepsilon,m(\varepsilon)}(t)\,\mathrm{d} t-P_m\left(u_{\bar\theta_{\varepsilon,m(\varepsilon)}}(t)\cdot \nabla \bar{\theta}_{\varepsilon,m(\varepsilon)}(t)\right)\,\mathrm{d} t \\&+\sqrt{\varepsilon} \Lambda^{2\beta} \,\mathrm{d} W^{m(\varepsilon)}(t)+P_{m(\varepsilon)}\Lambda^{2\beta}g\,\mathrm{d}t
		\end{aligned}
	\end{equation}
	with the initial data $\bar{\theta}_{\varepsilon,m(\varepsilon)}(0)\sim\mathcal{G}(P_{m(\varepsilon)}\theta(0),\varepsilon P_{m(\varepsilon)}/2).$  
	
	Let $ \pi_{\varepsilon,m(\varepsilon),\theta}  $ be the law of $\bar{\theta}_{\varepsilon,m(\varepsilon)}$ on  $\left(\Omega, \mathcal{F},\mathbb{P}\right)$. 
	According to \eqref{def-of-Z}, the definition of relative entropy, and the expression of  $ \mathcal{I}_0 $, it holds that
	$$
	\begin{aligned}
		\varepsilon \operatorname{Ent}\left(\pi_{\varepsilon,m(\varepsilon),\theta} | \mu_{\varepsilon,m(\varepsilon)}\right)& \leq \varepsilon \operatorname{Ent}(\mathbb{P} |\overline{\mathbb{P}})  =\varepsilon \mathbb{E}\left[\log Z^{\varepsilon,m(\varepsilon)}(T)\right] \\
		& =\frac{1}{2} \int_0^T\|P_{m(\varepsilon)}g(s)\|_{H}^2 \,\mathrm{d} s+\varepsilon \mathbb{E}\left[\log Y_0^{\varepsilon,m(\varepsilon)}\right]\\&\leqslant \frac{1}{2} \int_0^T\|g(s)\|_{H}^2 \,\mathrm{d}s+\mathcal{I}_0(\theta(0))=\mathcal{I}(\theta).
	\end{aligned}
	$$
	\
\end{proof}
\begin{lemma}\label{lemma-converge-to-delta-v}
	For every $ \varepsilon>0 $ and $m(\varepsilon)\in\mathbb{N}_+$ satisfying the  scaling regime \eqref{scaling-m}, 	 let $\theta,\, g,\,\pi^{\varepsilon,m(\varepsilon),\theta}$ and $ \mu_{\varepsilon,m(\varepsilon)}$ be as in Lemma \ref{lemma-entropy-estimate}. Then  $\pi^{\varepsilon,m(\varepsilon), \theta}$ converges to $\delta_\theta$ in law on $\mathbb{X}_{\alpha,\beta}$ as $ \varepsilon\rightarrow 0$.
\end{lemma}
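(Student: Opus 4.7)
The plan is to establish tightness of $\{\pi_{\varepsilon,m,\theta}\}$ on $\mathbb{X}_{\alpha,\beta}$, to identify every subsequential limit as a weak solution of the skeleton equation \eqref{PDE-SQG} with initial data $\theta(0)$ and control $g$ that moreover satisfies the $H^{\alpha-2\beta}$-energy inequality \eqref{energy-ineq-skeleton}, and then to invoke Definition \ref{def-weak-strong-uniqueness} to force this limit to equal $\theta$, so that $\pi_{\varepsilon,m,\theta}\Rightarrow\delta_\theta$. For tightness, I would apply the finite-dimensional It\^o formula to $\tfrac12\|\bar\theta_{\varepsilon,m}\|_{H^{\alpha-2\beta}(\mathbb{T}^2)}^2$ for the controlled equation \eqref{stochastic-control-equation} (the nonlinear term drops as in the proof of Theorem \ref{thm-existence-gamma}), obtaining
\begin{equation*}
\tfrac12\|\bar\theta_{\varepsilon,m}(t)\|_{H^{\alpha-2\beta}(\mathbb{T}^2)}^2+\int_0^t\|\bar\theta_{\varepsilon,m}\|_{H^{2\alpha-2\beta}(\mathbb{T}^2)}^2\mathrm{d}s=\tfrac12\|\bar\theta_{\varepsilon,m}(0)\|_{H^{\alpha-2\beta}(\mathbb{T}^2)}^2+\int_0^t\langle\Lambda^{2\alpha-2\beta}\bar\theta_{\varepsilon,m},P_mg\rangle\mathrm{d}s+\sqrt{\varepsilon}M_{\varepsilon,m}(t)+\tfrac{\varepsilon}{2}\|\Lambda^\alpha P_m\|_{HS}^2 t.
\end{equation*}
Combining Young's inequality on the drift term, Burkholder--Davis--Gundy on the martingale $M_{\varepsilon,m}$, the scaling \eqref{scaling-m} (which kills the It\^o correction $\varepsilon\|\Lambda^\alpha P_m\|_{HS}^2\lesssim \varepsilon m^{2+2\alpha}$), and the convergence $\mathbb{E}\|\bar\theta_{\varepsilon,m}(0)\|_{H^{\alpha-2\beta}(\mathbb{T}^2)}^2\to\|\theta(0)\|_{H^{\alpha-2\beta}(\mathbb{T}^2)}^2$, I would obtain uniform bounds on $\bar\theta_{\varepsilon,m}$ in $L^2(\mathbb{P};L^\infty([0,T];H^{\alpha-2\beta})\cap L^2([0,T];H^{2\alpha-2\beta}))$. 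Together with a fractional-time estimate in a negative Sobolev space along the lines of Lemma \ref{exponential-estimate-lemma2} (the added drift is controlled by $\|g\|_{L^2([0,T];H)}$), Aubin--Lions yields tightness of $\{\pi_{\varepsilon,m,\theta}\}$ on $\mathbb{X}_{\alpha,\beta}$.

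Next, by Jakubowski--Skorokhod I would extract a further subsequence and realize the laws on a new probability space along which $\bar\theta_{\varepsilon,m}$ converges $\mathbb{P}$-almost surely in $\mathbb{X}_{\alpha,\beta}$ to some $\bar\theta$. Passing to the limit in the weak formulation of \eqref{stochastic-control-equation}: the stochastic integral has quadratic variation of order $\varepsilon$ and vanishes in probability; the initial datum $\bar\theta_{\varepsilon,m}(0)\to\theta(0)$ in probability in $H^{\alpha-2\beta}$, because $P_m\theta(0)\to\theta(0)$ there and the Gaussian perturbation has $H^{\alpha-2\beta}$-variance of order $\varepsilon m^{2+2\alpha}\to 0$ by \eqref{scaling-m}; the drift $P_m\Lambda^{2\beta}g\to\Lambda^{2\beta}g$ strongly in $L^2([0,T];H^{-2\beta})$; the dissipation passes to the limit by weak convergence in $L^2([0,T];H^{2\alpha-2\beta})$; and the nonlinear term is handled by the Littlewood--Paley decomposition \eqref{decomposition} with the commutator estimate \eqref{high-frequency}, verbatim as in the proof of Lemma \ref{lemma-lsc}. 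This identifies $\bar\theta$, $\mathbb{P}$-almost surely, as a weak solution of \eqref{PDE-SQG} in the sense of Definition \ref{def-weak-solution-skeleton-equation} with initial data $\theta(0)$ and control $g$.

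To verify the $H^{\alpha-2\beta}$-energy inequality for $\bar\theta$, I would test the prelimit It\^o identity against an arbitrary $\psi\in L^\infty(\mathbb{P};L^\infty([0,T];\mathbb{R}_+))$ and send $\varepsilon\to 0$, applying Fatou's lemma to the $H^{2\alpha-2\beta}$-dissipation together with lower semi-continuity of the $H^{\alpha-2\beta}$-norm on the left, identifying the limit of $\sqrt{\varepsilon}M_{\varepsilon,m}$ as $\int_0^\cdot\langle \Lambda^{2\alpha-2\beta}\bar\theta,\text{stochastic}\rangle$ which contributes nothing deterministic after the limit vanishing of $\varepsilon$, and noting that the It\^o correction vanishes by \eqref{scaling-m}. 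The arbitrariness of $\psi$ then yields the pathwise inequality \eqref{energy-ineq-skeleton} for $\bar\theta$, following the final step of the proof of Theorem \ref{thm-existence-gamma}.

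Finally, since $\theta\in\mathcal{C}_0$ is a weak solution of \eqref{PDE-SQG} with initial data $\theta(0)$ and control $g$, and $\bar\theta$ is another weak solution with identical data and control satisfying the energy inequality \eqref{energy-ineq-skeleton}, the definition of the weak--strong uniqueness class forces $\bar\theta=\theta$ in $\mathbb{X}_{\alpha,\beta}$ almost surely. Because every subsequential limit coincides with the same deterministic element, the full family $\pi_{\varepsilon,m,\theta}$ converges weakly to $\delta_\theta$ on $\mathbb{X}_{\alpha,\beta}$. The main obstacle is that the limit must carry enough structure to trigger weak--strong uniqueness, namely being a \emph{bona fide} weak solution \emph{and} obeying \eqref{energy-ineq-skeleton}; both requirements rely on the scaling regime \eqref{scaling-m} making the It\^o correction $\tfrac{\varepsilon}{2}\|\Lambda^\alpha P_m\|_{HS}^2 t$ negligible in the limit, which is precisely why we work with the Galerkin truncation rather than the spatial regularization here.
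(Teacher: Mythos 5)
Your proposal is correct and follows essentially the same route as the paper: tightness of the controlled Galerkin laws, Jakubowski--Skorokhod extraction, identification of the limit as a weak solution of \eqref{PDE-SQG} with control $g$ via the Littlewood--Paley/commutator argument of Lemma \ref{lemma-lsc}, passage to the limit in the It\^o energy identity to obtain \eqref{energy-ineq-skeleton}, and then weak--strong uniqueness to force the limit to be $\theta$. The only differences are cosmetic (e.g.\ testing the prelimit energy identity against $\psi$ rather than taking the pointwise-in-$t$ limit along a further a.s.\ convergent subsequence, and a harmless overestimate of the initial-datum variance as $\varepsilon m^{2+2\alpha}$ rather than $\varepsilon m^{2}$), neither of which affects the argument.
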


\begin{proof}
	The proof will be divided into three steps. First, we show that $ \{\pi^{\varepsilon,m(\varepsilon),\theta}\} $ is tight in $\mathbb{X}_{\alpha,\beta}$ and that any limit point $\bar{\theta}$ is a weak solution of \eqref{PDE-SQG} in the sense of Definition \ref{def-weak-solution-skeleton-equation}. Next, we prove the $ H^{\alpha-2\beta}$-energy inequality for $\bar{\theta}$. Finally, we conclude that $ \bar{\theta}=\theta $ in $\mathbb{X}_{\alpha,\beta}$ by using the weak-strong uniqueness.  
	
	\textbf{Step 1.  $\bar{\theta}$ is a weak solution of the skeleton equation.}
	Tightness of $\{\bar{\theta}_{\varepsilon,m(\varepsilon)}\}$ in $ \mathbb{X}_{\alpha,\beta} $ follows by the same method as in the proof of Proposition \ref{prop-exponential-tightness}, thus we omit the proof. 
	According to  Jakubowski-Skorokhod representation theorem \cite{Jak97},  for any sequence $\varepsilon_k \rightarrow 0$, there exists a subsequence  $(\varepsilon_k,m_k(\varepsilon))$ (still denoted by $ (\varepsilon,m(\varepsilon)) $), a new stochastic basis (still denoted by $(\Omega, \mathcal{F},\left\{\mathcal{F}_t\right\}_{t \in[0,T]}, \mathbb{P})$), a cylindrical Wiener process (still denoted by $ W $), a sequence of  $\mathbb{X}_{\alpha,\beta}$-valued random elements with the same laws as $\{\bar{\theta}_{\varepsilon,m(\varepsilon)}\}$ on the new probability space (still denoted by $\{\bar{\theta}_{\varepsilon,m(\varepsilon)}\}$),  and a limit point $\bar{\theta}$,  such that  $\mathbb{P}$-almost surely, $
	\bar{\theta}_{\varepsilon,m(\varepsilon)} \rightarrow \bar{\theta} \, \text { in } \mathbb{X}_{\alpha,\beta}$.
	We claim that $ \bar{\theta} $ is a weak solution of \eqref{PDE-SQG}.
	For any $\varphi \in C^{\infty}\left([0, T]; H^\infty(\mathbb{T}^2)\right)$  and $ t\in[0,T]$, we will pass to the limits in all terms of 
	\begin{equation}\label{test-function-bar-theta-m}
		\begin{aligned}
			&\left\langle \bar{\theta}_{\varepsilon,m(\varepsilon)}(t), \varphi(t)\right\rangle+\int_0^t\left\langle \bar{\theta}_{\varepsilon,m(\varepsilon)}, \Lambda^{2\alpha} \varphi\right\rangle \,\mathrm{d}s=\left\langle  \bar{\theta}_{\varepsilon,m(\varepsilon)}(0),\varphi(0)\right\rangle+\int_0^t\langle \bar{\theta}_{\varepsilon,m(\varepsilon)},\partial_s\varphi \rangle\,\mathrm{d}s\\&-\frac{1}{2}\int^t_0\left\langle R_2 \bar{\theta}_{\varepsilon,m(\varepsilon)},\left[\Lambda, \partial_1 \varphi\right]\Lambda^{-1} \bar{\theta}_{\varepsilon,m(\varepsilon)}\right\rangle \,\mathrm{d} s+\frac{1}{2}\int_0^t\left\langle R_1 \bar{\theta}_{\varepsilon,m(\varepsilon)} ,\left[\Lambda, \partial_2 \varphi\right]\Lambda^{-1} \bar{\theta}_{\varepsilon,m(\varepsilon)}\right\rangle \,\mathrm{d}s\\&+\sqrt{\varepsilon}\int_{0}^t \langle \Lambda^{2\beta}\varphi,P_{m(\varepsilon)}\,\mathrm{d}W(s)\rangle+\int_0^t\left\langle P_{m(\varepsilon)}g, \Lambda^{2\beta} \varphi\right\rangle \,\mathrm{d}s.
		\end{aligned}
	\end{equation}
	It follows from the proof of Lemma \ref{lemma-lsc} that, $\mathbb{P}$-almost surely,  for all $ t\in[0,T], $ as $ \varepsilon\rightarrow 0, $  the first six terms in \eqref{test-function-bar-theta-m} converge.
	For the stochastic integral term, applying Doob's $L^2$-inequality, we obtain
	$$
	\mathbb{E} \left( \sup _{t \in[0,T]} \left|\sqrt{\varepsilon}\int_{0}^t \langle \Lambda^{2\beta}\varphi,P_{m(\varepsilon)}\,\mathrm{d}W(s)\rangle\right|^2 \right)  \leq 4\varepsilon \mathbb{E} \left|\int_{0}^{T} \langle \Lambda^{2\beta}\varphi,P_{m(\varepsilon)}\,\mathrm{d}W(s)\rangle\right|^2.
	$$ 
	According to  It\^o isometry,
	$$\mathbb{E} \left|\int_{0}^{T} \langle \Lambda^{2\beta}\varphi,P_{m(\varepsilon)}\,\mathrm{d}W(s)\rangle\right|^2\leq \|\Lambda^{2\beta}\varphi\|_{L^2({0,T};H)}.$$
	Hence we can pass to a further subsequence so that $\sqrt{\varepsilon}\int_{0}^t \langle \Lambda^{2\beta}\varphi,P_{m(\varepsilon)}\,\mathrm{d}W(s)\rangle \rightarrow 0$, almost surely, for all $t\in [0,T]$.   Finally, the convergence of the last term follows from the fact that $ g\in L^2([0, T];H)$ and H\"older's inequality.
	Therefore we conclude that, $ \mathbb{P}$-almost surely, for all $t\in[0,T]$,
	\begin{equation*}
		\begin{aligned}
			&\langle \bar{\theta}(t), \varphi(t)\rangle+\int_0^t\left\langle \bar{\theta}, \Lambda^{2\alpha} \varphi\right\rangle \,\mathrm{d}s=\left\langle  \bar{\theta}(0),\varphi(0)\right\rangle+\int_0^t\langle \bar{\theta},\partial_s\varphi \rangle\,\mathrm{d}s\\&-\frac{1}{2}\int^t_0\left\langle R_2 \bar{\theta},\left[\Lambda, \partial_1 \varphi\right]\Lambda^{-1} \bar{\theta}\right\rangle \,\mathrm{d} s+\frac{1}{2}\int_0^t\left\langle R_1 \bar{\theta} ,\left[\Lambda, \partial_2 \varphi\right]\Lambda^{-1} \bar{\theta}\right\rangle \,\mathrm{d}s+\int_0^t\left\langle g, \Lambda^{2\beta} \varphi\right\rangle \,\mathrm{d}s.
		\end{aligned}
	\end{equation*}
	
	\textbf{Step 2. $ \bar{\theta} $ satisfies $ H^{\alpha-2\beta}$-energy inequality.}
	Applying It\^o's formula to	$\bar{\theta}_{\varepsilon,m(\varepsilon)}$, we deduce that  $\mathbb{P}$-almost surely, for all $t\in[0,T]$,
	\begin{equation}\label{energy-ineq-galerkin}
		\begin{aligned}
			&\quad\frac{1}{2}\left\|\bar{\theta}_{\varepsilon,m(\varepsilon)}(t)\right\|_{H^{\alpha-2\beta}(\mathbb{T}^2)}^2+\int_0^t\left\|\bar{\theta}_{\varepsilon,m(\varepsilon)}(s)\right\|_{H^{2\alpha-2\beta}(\mathbb{T}^2)}^2 \,\mathrm{d}s \\&= \frac{1}{2}\left\|\bar{\theta}_{\varepsilon,m(\varepsilon)}(0)\right\|_{H^{\alpha-2\beta}(\mathbb{T}^2)}^2 +\sqrt{\varepsilon} \int_0^t\left\langle  \Lambda^{2\alpha-2\beta} \bar{\theta}_{\varepsilon,m(\varepsilon)}(s),  P_{m(\varepsilon)}\mathrm{d}W(s)\right\rangle\\&+\frac{\varepsilon}{2}\left\|P_{m(\varepsilon)}\Lambda^{\alpha}\right\|_{HS}^2t
			+\int_0^t\left\langle\Lambda^{2\alpha-2\beta} \bar{\theta}_{\varepsilon,m(\varepsilon)}(s),P_{m(\varepsilon)} g(s)\right\rangle\,\mathrm{d}s.
		\end{aligned}
	\end{equation}
	Letting $ \varepsilon\rightarrow 0 $ in \eqref{energy-ineq-galerkin}, it follows from the fact that $
	\bar{\theta}_{\varepsilon,m(\varepsilon)} \rightarrow \bar{\theta} \, \text { in } \mathbb{X}_{\alpha,\beta}
	$ and the lower semi-continuity of the $ H^s(\mathbb{T}^2) $ norms that
	$$
	\begin{aligned}
		&\quad\frac{1}{2}\|\bar{\theta}(t)\|_{H^{\alpha-2\beta}(\mathbb{T}^2)}^2+\int_0^t\|\bar{\theta}(s)\|_{H^{2\alpha-2\beta}(\mathbb{T}^2)}^2 \,\mathrm{d} s \\&\leq \liminf _{\varepsilon \rightarrow 0} 	\frac{1}{2}\left\|\bar{\theta}_{\varepsilon,m(\varepsilon)}(t)\right\|_{H^{\alpha-2\beta}(\mathbb{T}^2)}^2+\int_0^t\left\|\bar{\theta}_{\varepsilon,m(\varepsilon)}(s)\right\|_{H^{2\alpha-2\beta}(\mathbb{T}^2)}^2 \,\mathrm{d}s .
	\end{aligned}
	$$					
	Let $M^{\varepsilon,m(\varepsilon)}_2(t):=\int_{0}^t \sqrt{\varepsilon}\left\langle \Lambda^{2\alpha-2\beta}\bar{\theta}_{\varepsilon,m(\varepsilon)}(s),P_{m(\varepsilon)}  \,\mathrm{d}W(s)\right\rangle$.  Applying Doob's $L^2$-inequality, we obtain
	$$
	\mathbb{E} \left( \sup _{t \leq T} \left|M_2^{\varepsilon,m(\varepsilon)}(t)\right|^2 \right) \leq 4 \mathbb{E}\left\langle\langle M_2^{\varepsilon,m(\varepsilon)}\right\rangle\rangle_T \leq 4\varepsilon \mathbb{E} \int_0^T\left\|\bar{\theta}_{\varepsilon,m(\varepsilon)}(s)\right\|_{H^{2\alpha-2\beta}(\mathbb{T}^2)}^2 \,\mathrm{d}s \rightarrow 0.
	$$ Hence we can pass to a further subsequence so that $M_2^{\varepsilon,m(\varepsilon)}(t) \rightarrow 0$ almost surely.  
	By the scaling regime \eqref{scaling-m}, the correction term $\frac{\varepsilon}{2}\left\|P_{m(\varepsilon)}\Lambda^{\alpha}\right\|_{HS}^2t \rightarrow 0$. 
	Finally,  according to the weak convergence $ \Lambda^{2\alpha-2\beta}\bar{\theta}_{\varepsilon,m(\varepsilon)} \rightarrow \Lambda^{2\alpha-2\beta} \bar{\theta}$ in $ L^2([0,T];H)$ and the strong convergence $ P_{m(\varepsilon)}g \rightarrow g$ in $ L^2([0,T];H)$, it holds that 
	$$
	\left|\int_0^t\left\langle\Lambda^{2\alpha-2\beta} \bar{\theta}_{\varepsilon,m(\varepsilon)}, P_{m(\varepsilon)} g\right\rangle \,\mathrm{d}s-\int_0^t\left\langle\Lambda^{2\alpha-2\beta} \bar{\theta},  g\right\rangle \,\mathrm{d}s\right| \rightarrow 0 .
	$$
	Therefore we conclude that, for all $t\in[0,T]$, \begin{equation*}
		\frac{1}{2}\left\|\bar{\theta}(t)\right\|_{H^{\alpha-2\beta}(\mathbb{T}^2)}^2+\int_0^t\left\|\bar{\theta}(s)\right\|_{H^{2\alpha-2\beta}(\mathbb{T}^2)}^2 \,\mathrm{d}s \leq \frac{1}{2}\left\|\bar{\theta}(0)\right\|_{H^{\alpha-2\beta}(\mathbb{T}^2)}^2 
		+\int_0^t\left\langle\Lambda^{2\alpha-2\beta} \bar{\theta},  g\right\rangle \,\mathrm{d}s.
	\end{equation*}
	
	\textbf{Step 3. $ \bar{\theta}=\theta $ in $\mathbb{X}_{\alpha,\beta}. $ }
	Thanks to the choice $\theta \in \mathcal{C}_0$, it follows from Definition \ref{def-weak-strong-uniqueness-intro} that $\theta=\bar{\theta}$ in $\mathbb{X}_{\alpha,\beta}$.  Hence $\pi^{\varepsilon,m, \theta}$ converges to $\delta_\theta$ in law on $\mathbb{X}_{\alpha,\beta}$ as $ \varepsilon\rightarrow 0$.
\end{proof}
We can now prove the large deviations lower bound restricted to $\mathcal{C}_0$.
\begin{lemma}[Lower bound on $\mathcal{C}_0$]
	
	For every $\varepsilon,\delta(\varepsilon)>0$ and $m(\varepsilon)\in\mathbb{N}_+$, let $\theta_{\varepsilon,\delta(\varepsilon)}$ be a stochastic generalized Leray solution of \eqref{SPDE-SQG} in the sense of Definition \ref{def-stochastic-leray} with initial data $ \theta_{\varepsilon, \delta(\varepsilon)}(0)$ satisfying Assumption \ref{Gaussian-initial-data},  and let $\theta_{\varepsilon,m(\varepsilon)}$ be the solution of \eqref{Galerkin approximation equations-delta=0} with initial data $ \theta_{\varepsilon, m(\varepsilon)}(0)\sim \mathcal{G}(0,\varepsilon P_{m(\varepsilon)}/2)$. Suppose that $ \mathcal{C}_0\subset \mathbb{X}_{\alpha,\beta} $ is a weak-strong uniqueness class of \eqref{PDE-SQG}.
	
	\item	(i) Let $\mu_{\varepsilon}=\mu_{\varepsilon,\delta(\varepsilon)}$ be the law of $\theta_{\varepsilon,\delta(\varepsilon)}$ on $\mathbb{X}_{\alpha,\beta}$. Assume that the scaling regime \eqref{scaling-delta} holds for $(\varepsilon,\delta(\varepsilon))$, 
	then for any open set $G\subset\mathbb{X}_{\alpha,\beta}$, 
	\begin{equation}\label{lower-bound-0}
		\liminf_{\varepsilon\rightarrow0}\varepsilon\log\mu_{\varepsilon}(G)\geq-\inf_{\theta\in G\cap \mathcal{C}_0}\mathcal{I}(
		\theta). 	
	\end{equation}
	\item(ii)  Let $\mu_{\varepsilon}=\mu_{\varepsilon,m(\varepsilon)}$ be the law of $\theta_{\varepsilon,m(\varepsilon)}$ on $\mathbb{X}_{\alpha,\beta}$. Assume that the scaling regime \eqref{scaling-m} holds for $(\varepsilon,m(\varepsilon))$, then for any open set $G\subset\mathbb{X}_{\alpha,\beta}$, the restricted lower bound \eqref{lower-bound-0} holds as well.						
\end{lemma}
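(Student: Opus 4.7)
The strategy is a direct application of the entropy method of Lemma \ref{lemma-entropy-method}. Take $E=\mathbb{X}_{\alpha,\beta}$ and define the rate function
\[
I(\theta) := \begin{cases} \mathcal{I}(\theta), & \theta\in\mathcal{C}_0, \\ +\infty, & \theta\in\mathbb{X}_{\alpha,\beta}\setminus\mathcal{C}_0. \end{cases}
\]
The task reduces to producing, for every $\theta\in\mathbb{X}_{\alpha,\beta}$, a family $\{\pi_{\varepsilon,\theta}\}_{\varepsilon>0}$ of probability measures on $\mathbb{X}_{\alpha,\beta}$ such that $\pi_{\varepsilon,\theta}\to\delta_\theta$ weakly and $\limsup_{\varepsilon}\varepsilon\,\mathrm{Ent}(\pi_{\varepsilon,\theta}\mid\mu_\varepsilon)\leq I(\theta)$; the desired lower bound then follows since $\inf_{\theta\in G}I(\theta)=\inf_{\theta\in G\cap\mathcal{C}_0}\mathcal{I}(\theta)$.

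For case (ii), if $\theta\in\mathcal{C}_0$ and $\mathcal{I}(\theta)<\infty$, I take $\pi_{\varepsilon,\theta}:=\pi_{\varepsilon,m,\theta}$ as supplied by Lemma \ref{lemma-entropy-estimate}; the entropy bound is exactly that lemma, and the weak convergence $\pi_{\varepsilon,m,\theta}\to\delta_\theta$ is Lemma \ref{lemma-converge-to-delta-v}. If $I(\theta)=+\infty$, either because $\theta\notin\mathcal{C}_0$ or $\mathcal{I}(\theta)=+\infty$, I set $\pi_{\varepsilon,\theta}:=\delta_\theta$; weak convergence is immediate and the entropy bound holds trivially with right-hand side $+\infty$. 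Lemma \ref{lemma-entropy-method} now yields \eqref{lower-bound-0} with $\mu_\varepsilon=\mu_{\varepsilon,m}$.

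For case (i) the same plan applies once I establish $(\varepsilon,\delta)$-analogues of Lemmas \ref{lemma-entropy-estimate} and \ref{lemma-converge-to-delta-v}. The construction of $\pi_{\varepsilon,\delta,\theta}$ is a Cameron-Martin-Girsanov shift on a stochastic basis supporting a generalized Leray solution $\bar{\theta}_{\varepsilon,\delta}$ of \eqref{SPDE-SQG}: shift the initial law from $\mathcal{G}(0,\varepsilon Q_\delta/2)$ to $\mathcal{G}(\theta(0),\varepsilon Q_\delta/2)$ via a Radon-Nikodym factor $Y_0^{\varepsilon,\delta}$, and tilt the driving cylindrical Wiener process so that, under the new measure, $\bar{\theta}_{\varepsilon,\delta}$ solves a controlled SPDE with forcing $\Lambda^{2\beta}g$. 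The entropy estimate is then the standard logarithmic computation of Girsanov, giving $\varepsilon\,\mathrm{Ent}(\pi_{\varepsilon,\delta,\theta}\mid\mu_{\varepsilon,\delta})\leq\tfrac{1}{2}\|g\|_{L^2([0,T];H)}^2+\mathcal{I}_0(\theta(0))+o(1)=\mathcal{I}(\theta)+o(1)$. For the weak convergence, exponential tightness (Proposition \ref{prop-exponential-tightness}) together with a passage-to-the-limit argument as in the proof of Lemma \ref{lemma-converge-to-delta-v} shows that any limit point $\bar{\theta}$ of $\bar{\theta}_{\varepsilon,\delta}$ is a weak solution of the skeleton equation satisfying the $H^{\alpha-2\beta}$-energy inequality \eqref{energy-ineq-skeleton}; the weak-strong uniqueness built into $\mathcal{C}_0$ then identifies $\bar{\theta}=\theta$.

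The main technical obstacle lies in the Girsanov tilt itself: producing the drift $\Lambda^{2\beta}g$ requires shifting the noise by $h=\sqrt{Q_\delta}^{-1}g=(I+\delta\Lambda^{2s})^{1/2}g$, which for general $g\in L^2([0,T];H)$ fails to remain in $L^2([0,T];H)$. I resolve this by pre-smoothing $g$ to a finite-frequency truncation $g_N$, applying Girsanov with the well-defined shift $h_N=\sqrt{Q_\delta}^{-1}g_N$, and taking a diagonal limit $\varepsilon\to 0$ with $N=N(\varepsilon)\to\infty$ slowly enough that the additional entropy contribution $\tfrac{\delta}{2}\|g_N\|_{L^2([0,T];H^s)}^2$ vanishes under the scaling regime \eqref{scaling-delta}; the lower semi-continuity of $\mathcal{I}$ (Lemma \ref{lemma-lsc}) then ensures $\limsup_{\varepsilon}\varepsilon\,\mathrm{Ent}(\pi_{\varepsilon,\delta,\theta}\mid\mu_{\varepsilon,\delta})\leq\mathcal{I}(\theta)$. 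Equivalently, one may first perform the tilt on the Galerkin equation \eqref{Galerkin approximation equations} (where the shift is finite-dimensional and automatically $L^2$-valued) and then pass $m\to\infty$, invoking the exponential tightness of Proposition \ref{prop-exponential-tightness} to extract a limit.
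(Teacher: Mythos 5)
Your treatment of case (ii) is exactly the paper's argument: the paper proves (ii) by combining Lemma \ref{lemma-entropy-estimate}, Lemma \ref{lemma-converge-to-delta-v}, and Lemma \ref{lemma-entropy-method}, and disposes of the case $\theta\notin\mathcal{C}_0$ or $\mathcal{I}(\theta)=\infty$ precisely as you do, by taking $\pi_{\varepsilon,\theta}=\delta_\theta$. For case (i) your overall strategy (a Girsanov tilt on the stochastic basis carrying a generalized Leray solution, the entropy computation, and compactness plus weak--strong uniqueness for the weak convergence, with the pathwise energy inequality of Definition \ref{def-stochastic-leray} replacing It\^o's formula) is also the intended one.

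There is, however, a genuine gap in your construction of the tilted initial law in case (i). You shift the initial distribution from $\mathcal{G}(0,\varepsilon Q_\delta/2)$ to $\mathcal{G}(\theta(0),\varepsilon Q_\delta/2)$ and invoke a Radon--Nikodym factor $Y_0^{\varepsilon,\delta}$. The Cameron--Martin space of $\mathcal{G}(0,\varepsilon Q_\delta/2)$ is the range of $\sqrt{Q_\delta}=(I+\delta\Lambda^{2s})^{-1/2}$, essentially $H^{\alpha-2\beta+s}(\mathbb{T}^2)$; for a generic $\theta(0)\in H^{\alpha-2\beta}(\mathbb{T}^2)$ the two Gaussian measures are therefore mutually singular, $Y_0^{\varepsilon,\delta}$ does not exist, and the relative entropy is $+\infty$. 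The repair is the one the paper uses in Proposition \ref{prop-ldp-initial}: shift instead to $\mathcal{G}(\sqrt{Q_\delta}\,\theta(0),\varepsilon Q_\delta/2)$, whose entropy relative to $\mathcal{G}(0,\varepsilon Q_\delta/2)$ equals $\varepsilon^{-1}\|\theta(0)\|_{H^{\alpha-2\beta}(\mathbb{T}^2)}^2$ exactly, and whose mean still converges to $\theta(0)$. This is the analogue of the projection $P_m\theta(0)$ in the Galerkin case, where the issue is invisible because the Cameron--Martin space is all of $H_m$.

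On the noise shift: the obstacle you identify, that $\sqrt{Q_\delta}^{-1}g$ need not lie in $L^2([0,T];H)$, is real, but the truncation-and-diagonal-limit workaround is more elaborate than necessary and not what the paper's Galerkin computation suggests. The consistent move is to shift $W$ by $\varepsilon^{-1/2}\int_0^{\cdot}g\,\mathrm{d}s$ itself; the controlled equation then carries the drift $\Lambda^{2\beta}\sqrt{Q_\delta}g$ rather than $\Lambda^{2\beta}g$, the Girsanov entropy is exactly $\tfrac{1}{2\varepsilon}\|g\|_{L^2([0,T];H)}^2$ with no correction term, and the passage to the limit uses $\sqrt{Q_\delta}g\rightarrow g$ in $L^2([0,T];H)$ in the same place where the proof of Lemma \ref{lemma-converge-to-delta-v} uses $P_mg\rightarrow g$. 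Finally, the appeal to the lower semi-continuity of $\mathcal{I}$ at the end of your entropy estimate is superfluous: the bound follows directly from the Girsanov computation.
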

\begin{proof}
	(ii) is a direct consequence of Lemma \ref{lemma-entropy-estimate}, Lemma \ref{lemma-converge-to-delta-v}, and Lemma \ref{lemma-entropy-method}.  And (i) can be proved in much the same way, the only difference being in the analysis of the pathwise $H^{\alpha-2\beta}$-energy inequality. More precisely, in the compactness analysis, we need to replace the It\^o's formula \eqref{energy-ineq-galerkin} by the $ H^{\alpha-2\beta} $-energy inequality in Definition \ref{def-stochastic-leray}.
\end{proof}

Now we come to the proof of the restricted lower bound on $  \mathcal{C}= \overline{\mathcal{C}_0}^{\mathcal{I}}$.

\textit{Proof of Proposition \ref{prop-restricted-lower-bound}}.
For any open set $G\subset \mathbb{X}_{\alpha,\beta}$ and any $\eta>0$, there exists an element $\theta^0 \in G \cap \mathcal{C}$ such that	
\begin{equation}\label{restricted-lower-bound-estimate1}
	\mathcal{I}(\theta^0) \leq \inf _{\theta \in G\cap \mathcal{C}} \mathcal{I}(\theta)+\frac{\eta}{2}.
\end{equation}
By the definition of $  \mathcal{C} $,  there exists a sequence $\{\theta^{(n)}\}_{n\geq1} \subset \mathcal{C}_0$, such that $\theta^{(n)} \rightarrow \theta^0$ in  $\mathbb{X}_{\alpha,\beta}$ and $\mathcal{I}\left(\theta^{(n)}\right) \rightarrow \mathcal{I}(\theta^0)$ as $ n\rightarrow\infty $. It  follows that
\begin{equation}\label{restricted-lower-bound-estimate2}
	\inf_{\theta \in G\cap \mathcal{C}_0}\mathcal{I}(\theta)\leq \mathcal{I}(\theta^0)+\frac{\eta}{2}.
\end{equation}
Using \eqref{restricted-lower-bound-estimate1}, \eqref{restricted-lower-bound-estimate2}, and the arbitrariness of $ \eta $, we obtain that
$$
\inf_{\theta \in G\cap \mathcal{C}_0}\mathcal{I}(\theta)= \inf_{\theta \in G\cap \mathcal{C}}\mathcal{I}(\theta).
$$
Hence \eqref{lower-bound} follows from \eqref{lower-bound-0}.

\

\section{Probabilistic Approaches to the Energy Equality}\label{section-energy-eq}	
In this section, we are devoted to exploring the relationship between the deterministic energy equality and large deviations. For any $ \alpha, \beta $ satisfying \eqref{pama-range}, 
we introduce the time-reversal operator $\mathfrak{T}_T: \mathbb{X}_{\alpha,\beta}\rightarrow\mathbb{X}_{\alpha,\beta}$, defined by $ \mathfrak{T}_T\theta:=-\theta(T-\cdot) $. 
\begin{theorem}\label{theorem-energy-equality}
	Assume further that $ \beta=\alpha/2.$	Let $\mathcal{C}_0\subset \mathbb{X}_{\alpha,\alpha/2}$ be a weak-strong uniqueness class of \eqref{PDE-SQG} such that $\mathcal{C}=\overline{\mathcal{C}_0}^{\mathcal{I}}$ contains non-empty time-reversible subsets. Let $\mathcal{R} \subset \mathcal{C}$ satisfy  $\mathcal{R}=\mathfrak{T}_T \mathcal{R}.$  Suppose that $ \theta\in \mathcal{R} $ is  a weak solution of \eqref{PDE-SQG} for some $g \in L^2([0, T];H)$ in the sense of Definition \ref{def-weak-solution-skeleton-equation}. Then the kinetic energy equality holds:
	\begin{equation}
		\frac{1}{2}\|\theta(T)\|_H^2+\int_0^T\|\theta(s)\|_{H^\alpha(\mathbb{T}^2)}^2\,\mathrm{d}s=\frac{1}{2}\|\theta(0)\|_H^2+\int_0^T\langle\Lambda^{\alpha}\theta, g\rangle\,\mathrm{d}s.
	\end{equation}
\end{theorem}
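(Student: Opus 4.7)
The plan is to prove the identity $\mathcal{I}(\theta) = \mathcal{I}(\mathfrak{T}_T\theta)$ for every $\theta \in \mathcal{R}$, and then to extract the energy equality from this identity by direct evaluation of both sides. The key probabilistic input is the time-reversibility of the Galerkin approximation $\theta_{\varepsilon,m}$ started from the Gaussian stationary distribution $\mathcal{G}(0,\varepsilon P_m/2)$, coupled with the restricted large deviation principle of Theorem \ref{Main-Theorem-1}(ii) and the lower semi-continuity of $\mathcal{I}$ from Lemma \ref{lemma-lsc}.

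First I would invoke Lemma \ref{time-revesible-property} to see that $(\mathfrak{T}_T)_*\mu_{\varepsilon,m} = \mu_{\varepsilon,m}$, since $\mathfrak{T}_T$ is an involution on $\mathbb{X}_{\alpha,\alpha/2}$ and the Galerkin dynamics started at equilibrium is invariant in law under time-reversal. Next, fix $\theta \in \mathcal{R}$ and a decreasing family of open neighborhoods $G_n \ni \theta$ whose closures $\overline{G_n}$ shrink to $\{\theta\}$. Since $\mathfrak{T}_T \theta \in \mathcal{R} \subset \mathcal{C}$, the restricted lower bound applied to the open set $\mathfrak{T}_T G_n$ combined with the $\mathfrak{T}_T$-invariance of $\mu_{\varepsilon,m}$ gives
\[
\liminf_{\varepsilon \to 0} \varepsilon \log \mu_{\varepsilon,m}(G_n) = \liminf_{\varepsilon \to 0} \varepsilon \log \mu_{\varepsilon,m}(\mathfrak{T}_T G_n) \geq -\inf_{\mathfrak{T}_T G_n \cap \mathcal{C}} \mathcal{I} \geq -\mathcal{I}(\mathfrak{T}_T \theta),
\]
whereas the full upper bound yields $\limsup_\varepsilon \varepsilon \log \mu_{\varepsilon,m}(\overline{G_n}) \leq -\inf_{\overline{G_n}} \mathcal{I}$. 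Lower semi-continuity of $\mathcal{I}$ forces $\inf_{\overline{G_n}} \mathcal{I} \to \mathcal{I}(\theta)$ as $n \to \infty$. Comparing these two estimates gives $\mathcal{I}(\theta) \leq \mathcal{I}(\mathfrak{T}_T \theta)$, and the same argument applied to $\mathfrak{T}_T \theta \in \mathcal{R}$ (using $\mathfrak{T}_T^2 = \mathrm{Id}$) produces the reverse inequality, whence $\mathcal{I}(\theta) = \mathcal{I}(\mathfrak{T}_T \theta)$.

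It then remains to compute both rate functions explicitly. Since $\beta = \alpha/2$ identifies $H^{\alpha - 2\beta}$ with $H$, Remark \ref{uniqueness-of-g} gives $\mathcal{I}(\theta) = \|\theta(0)\|_H^2 + \tfrac12 \|g\|^2_{L^2([0,T];H)}$ for the unique control $g$ associated to $\theta$. A substitution $s = T-t$ together with $\partial_t(\mathfrak{T}_T\theta)(t) = (\partial_t\theta)(T-t)$ and $R^\perp(-\tilde\theta)\cdot\nabla(-\tilde\theta) = R^\perp \tilde\theta \cdot \nabla \tilde\theta$ shows that $\mathfrak{T}_T \theta$ is itself a weak solution of \eqref{PDE-SQG} with control $\tilde g(t) = 2\Lambda^\alpha(\mathfrak{T}_T\theta)(t) + g(T-t)$, which belongs to $L^2([0,T]; H)$ because $\mathbb{X}_{\alpha,\alpha/2} \hookrightarrow L^2([0,T]; H^\alpha)$. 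Expanding the square gives
\[
\|\tilde g\|^2_{L^2([0,T];H)} = 4 \int_0^T \|\theta(s)\|_{H^\alpha}^2\,\mathrm{d}s - 4 \int_0^T \langle \Lambda^\alpha \theta(s), g(s)\rangle\,\mathrm{d}s + \|g\|^2_{L^2([0,T];H)},
\]
so that $\mathcal{I}(\mathfrak{T}_T \theta) = \|\theta(T)\|_H^2 + 2 \int_0^T \|\theta\|_{H^\alpha}^2\,\mathrm{d}s - 2 \int_0^T \langle \Lambda^\alpha \theta, g\rangle\,\mathrm{d}s + \tfrac12 \|g\|^2_{L^2([0,T];H)}$. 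Equating $\mathcal{I}(\theta) = \mathcal{I}(\mathfrak{T}_T \theta)$ cancels the $\tfrac12\|g\|^2$ contributions, and dividing by $2$ recovers the stated kinetic energy equality.

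The main obstacle lies in the second step: the restricted lower bound is available only on $\mathcal{C}$, so rate-function uniqueness along the orbit $\{\theta, \mathfrak{T}_T\theta\}$ requires both points to lie in $\mathcal{C}$, which is precisely guaranteed by the hypothesis $\mathcal{R} = \mathfrak{T}_T \mathcal{R}$. A secondary technical point is checking that $\mathfrak{T}_T \theta$ genuinely satisfies Definition \ref{def-weak-solution-skeleton-equation} with the explicit control $\tilde g$ — in particular that $\tilde g \in L^2([0,T]; H)$ and that the commutator form of the nonlinear term transforms correctly under time reversal (the two sign flips in $R^\perp \tilde\theta$ and $\nabla \tilde\theta$ cancel).
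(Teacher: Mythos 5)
Your proposal is correct and follows essentially the same route as the paper: time-reversibility of the Galerkin dynamics started from $\mathcal{G}(0,\varepsilon P_m/2)$, combined with the upper bound, the restricted lower bound on $\mathcal{C}$, and lower semi-continuity of $\mathcal{I}$, yields $\mathcal{I}(\theta)=\mathcal{I}(\mathfrak{T}_T\theta)$, after which the explicit evaluation of both rate functions via the unique control $g$ and the reversed control $\tilde g$ gives the energy equality. The only differences are cosmetic: you phrase the lower semi-continuity step via a shrinking sequence of neighborhoods rather than a per-$\eta$ choice of open set, and your sign convention $\tilde g(t)=2\Lambda^{\alpha}(\mathfrak{T}_T\theta)(t)+g(T-t)$ is in fact the correct one (the paper's displayed definition of $\tilde g$ carries a harmless sign slip that does not affect $\|\tilde g\|_{L^2([0,T];H)}^2$).
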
 Theorem \ref{theorem-energy-equality} establishes a connection between the weak-strong uniqueness regularity class and the kinetic energy equality. However, the proof of this purely analytic result is based on large deviations for \eqref{Galerkin approximation equations-delta=0}. The key argument relies on the time-reversibility property, and the proof will be provided later. Beyond time-reversibility, we emphasize that a Hamiltonian energy equality still holds in the case $ \beta=\alpha/2+1/4$. This leads to the following strengthening of Theorem \ref{theorem-energy-equality}: 
\begin{customthm}\label{theorem-generalized-energy-equality}
	Let $\mathcal{C}_0\subset \mathbb{X}_{\alpha,\beta}$ be a weak-strong uniqueness class of \eqref{PDE-SQG} such that $\mathcal{C}=\overline{\mathcal{C}_0}^{\mathcal{I}}$ contains non-empty time-reversible subsets. Let $\mathcal{R} \subset \mathcal{C}$ satisfy  $\mathcal{R}=\mathfrak{T}_T \mathcal{R}.$  Suppose that $ \theta\in \mathcal{R} $ is  a weak solution of \eqref{PDE-SQG} for some $g \in L^2([0, T];H)$ in the sense of Definition \ref{def-weak-solution-skeleton-equation}. Then the $ H^{\alpha-2\beta}$-energy equality holds:
	\begin{equation}\label{generlized-energy-eq}
		\frac{1}{2}\|\theta(T)\|_{H^{\alpha-2\beta}(\mathbb{T}^2)}^2+\int_0^T\|\theta(s)\|_{H^{2\alpha-2\beta}(\mathbb{T}^2)}^2\,\mathrm{d}s=\frac{1}{2}\|\theta(0)\|_{H^{\alpha-2\beta}(\mathbb{T}^2)}^2+\int_0^T\langle\Lambda^{2\alpha-2\beta}\theta, g\rangle\,\mathrm{d}s.
	\end{equation}
\end{customthm}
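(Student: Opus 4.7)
The plan is to mimic the probabilistic argument used for Theorem \ref{theorem-energy-equality}, replacing the $L^2$-Gaussian equilibrium by the $H^{\alpha-2\beta}$-Gaussian appropriate for general $\beta$. The scheme has three ingredients: (i) time-reversibility of the Galerkin approximation \eqref{Galerkin approximation equations-delta=0} started from its invariant measure; (ii) uniqueness of the large-deviations rate function on $\mathcal{C}$ via the restricted LDP as in Theorem \ref{Main-Theorem-1}; and (iii) explicit computation of $\mathcal{I}(\theta)$ and $\mathcal{I}(\mathfrak{T}_T\theta)$ for weak solutions.

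First, I would establish the analogue of Lemma \ref{time-revesible-property} with invariant measure $\mathcal{G}(0,\varepsilon\Lambda^{4\beta-2\alpha}P_m/2)$ on $H_m$, dictated by the Lyapunov balance of the OU part $\mathrm{d}\theta=-\Lambda^{2\alpha}\theta\,\mathrm{d}t+\sqrt{\varepsilon}\Lambda^{2\beta}P_m\,\mathrm{d}W$. For $\beta=\alpha/2$ this is $\mathcal{G}(0,\varepsilon P_m/2)$, and for $\beta=\alpha/2+1/4$ it is $\mathcal{G}(0,\varepsilon\Lambda P_m/2)$, supported on $\dot H^{-1/2}(\mathbb{T}^2)$. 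Detailed balance for the nonlinearity reduces to
\begin{align*}
\big\langle P_m(u_\theta\cdot\nabla\theta),\,\Lambda^{2\alpha-4\beta}\theta\big\rangle=0,\qquad \theta\in H_m,
\end{align*}
which holds in the $\beta=\alpha/2$ case because $u_\theta$ is divergence-free, and in the $\beta=\alpha/2+1/4$ case because integration by parts gives
\begin{align*}
\big\langle u_\theta\cdot\nabla\theta,\Lambda^{-1}\theta\big\rangle=-\big\langle \theta\, u_\theta,R\theta\big\rangle=0,
\end{align*}
by the pointwise identity $u_\theta\cdot R\theta=R^\perp\theta\cdot R\theta\equiv 0$. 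Combined with the exponential-tightness analysis of Section \ref{section-Tightness}, which was already performed in the $H^{\alpha-2\beta}$-framework for arbitrary $\beta$ in \eqref{pama-range}, this produces the Galerkin version of Theorem \ref{Main-Theorem-1} for this modified initial law.

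Second, applying that LDP to both $\theta_{\varepsilon,m}$ and $\mathfrak{T}_T\theta_{\varepsilon,m}$ under \eqref{scaling-m}, time-reversibility together with the uniqueness of the rate function on $\mathcal{C}$ force $\mathcal{I}(\mathfrak{T}_T\theta)=\mathcal{I}(\theta)$ for every $\theta\in\mathcal{R}=\mathfrak{T}_T\mathcal{R}\subset\mathcal{C}$. For the weak solution $\theta$ with (unique, by Remark \ref{uniqueness-of-g}) control $g$,
\begin{align*}
\mathcal{I}(\theta)=\|\theta(0)\|_{H^{\alpha-2\beta}(\mathbb{T}^2)}^2+\tfrac12\|g\|_{L^2([0,T];H)}^2.
\end{align*}
Writing $\tilde\theta:=\mathfrak{T}_T\theta=-\theta(T-\cdot)$, the chain rule and \eqref{PDE-SQG} evaluated at time $T-t$ yield
\begin{align*}
\partial_t\tilde\theta+\Lambda^{2\alpha}\tilde\theta+R^\perp\tilde\theta\cdot\nabla\tilde\theta=2\Lambda^{2\alpha}\tilde\theta+\Lambda^{2\beta}g(T-\cdot),
\end{align*}
and, after the change of variable $s=T-t$ and using $\|\Lambda^{2\beta}\cdot\|_{H^{-2\beta}(\mathbb{T}^2)}=\|\cdot\|_H$,
\begin{align*}
\mathcal{I}(\mathfrak{T}_T\theta)=\|\theta(T)\|_{H^{\alpha-2\beta}(\mathbb{T}^2)}^2+2\int_0^T\|\theta\|_{H^{2\alpha-2\beta}(\mathbb{T}^2)}^2\,\mathrm{d}s-2\int_0^T\langle\Lambda^{2\alpha-2\beta}\theta,g\rangle\,\mathrm{d}s+\tfrac12\|g\|_{L^2([0,T];H)}^2.
\end{align*}
Equating the two expressions and dividing by $2$ yields exactly \eqref{generlized-energy-eq}.

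The main obstacle is the case $\beta=\alpha/2+1/4$, where the invariant measure lives on $\dot H^{-1/2}(\mathbb{T}^2)$ rather than on $L^2(\mathbb{T}^2)$; the whole argument hinges on the algebraic identity $R^\perp\theta\cdot R\theta\equiv 0$ that makes detailed balance succeed in the $\dot H^{-1/2}$-inner product. Once this is in hand, the remaining steps (exponential tightness, variational characterization of the rate function, restricted lower bound, and the time-reversibility symmetry) transfer essentially verbatim from Sections \ref{section-Tightness}--\ref{section-lower}, since those sections are already set up for arbitrary $\beta$ in the range \eqref{pama-range}.
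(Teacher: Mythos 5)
Your proposal takes a genuinely different route from the paper. The paper explicitly declares that ``the probabilistic method breaks down when $\beta\neq\alpha/2$'' and proves Theorem \ref{theorem-generalized-energy-equality} analytically: it first establishes Lemma \ref{lemma-energy-ineq-on-C}, which says that every weak solution $\theta\in\mathcal{C}$ satisfies the $H^{\alpha-2\beta}$-energy \emph{inequality} (by approximating $\theta$ with $\theta^{(n)}\in\mathcal{C}_0$, showing the controls $g^{(n)}$ converge strongly in $L^2([0,T];H)$ because $\mathcal{I}(\theta^{(n)})\to\mathcal{I}(\theta)$, and passing to the limit in the inequality inherited from the Galerkin construction); it then applies this inequality twice, once to $\theta$ with control $g$ and once to $\mathfrak{T}_T\theta\in\mathcal{R}\subset\mathcal{C}$ with control $\tilde g=g(T-\cdot)+2\Lambda^{2\alpha-2\beta}\mathfrak{T}_T\theta$, and the two opposite inequalities combine into \eqref{generlized-energy-eq}. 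No large deviations and no stochastic time-reversibility enter at all. Your route instead extends the probabilistic mechanism of Theorem \ref{theorem-energy-equality}, which buys a unified treatment of both values of $\beta$ but requires re-proving Lemma \ref{time-revesible-property} for the anisotropic Gaussian $\prod_k N(0,\varepsilon|k|^{4\beta-2\alpha}/2)$.

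Two remarks on the substance of your version. First, your ``modified initial law'' $\mathcal{G}(0,\varepsilon\Lambda^{4\beta-2\alpha}P_m/2)$ is not a modification: read as a covariance on $\dot H^{\alpha-2\beta}(\mathbb{T}^2)$, the law $\mathcal{G}(0,\varepsilon P_m/2)$ already used in Propositions \ref{proposition-upperbound} and \ref{prop-restricted-lower-bound} has precisely the mode variances $\varepsilon|k|^{4\beta-2\alpha}/2$ (this is what makes \eqref{exp-estimate-0} come out as $(1-\eta)^{-1/2}$), so the Galerkin LDP you need is exactly the one the paper has already proved for arbitrary $\beta$ in \eqref{pama-range}. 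Second, and more importantly, your extension of the reversibility lemma is the step the authors assert to fail, so it carries the burden of proof. Your cubic identity $\langle u_\theta\cdot\nabla\theta,\Lambda^{-1}\theta\rangle=-\langle\theta R^{\perp}\theta,R\theta\rangle=0$ is correct and is the same cancellation the paper itself uses in the proof of Theorem \ref{thm-existence-gamma} for $\beta=\alpha/2+1/4$; but skew-adjointness of $\mathcal{L}_2$ in $L^2(\mathcal{G})$ additionally requires the vanishing of the first-order terms $\sum_k\partial_k b_k=\sum_{k,n}B_{k,k,n}x_n+\sum_{k,l}B_{k,l,k}x_l$ coming from $\nabla\cdot(b\rho)=0$, which you do not mention (it does hold, since $B_{k,k,n}=0$ by incompressibility and $B_{k,l,k}=0$ because $\nabla e_k\cdot R^{\perp}e_k\propto k\cdot k^{\perp}=0$, but it must be checked against the new weights). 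If you write out this lemma completely, your argument closes and in fact shows that the paper's ``breaks down'' remark is stronger than necessary; as submitted, the proposal leans on a lemma the paper only proves for $\beta=\alpha/2$ and explicitly declines to extend, so that step cannot be left as a sketch.
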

The probabilistic method breaks down when $ \beta\neq \alpha/2. $ Instead, we will provide an analytic proof for this more general case, following a similar approach to the one used in \cite[Theorem 2.7]{GHW24_LLNS}. 

The remainder of this section will be dedicated to proving Theorem \ref{theorem-energy-equality} and Theorem \ref{theorem-generalized-energy-equality}.
\begin{lemma}\label{time-revesible-property}
	For any  $ \varepsilon>0 $ and $ m(\varepsilon)\in \mathbb{N}_+$, suppose that $ \theta_{\varepsilon, m} $ is the solution of \eqref{Galerkin approximation equations-delta=0} with  $ \beta=\alpha/2 $ and $ \theta_{\varepsilon, m(\varepsilon)}(0)\sim \mathcal{G}(0,\varepsilon P_{m(\varepsilon)}/2)$. Then the law of $ \theta_{\varepsilon, m(\varepsilon)} $ is invariant under $\mathfrak{T}$:
	$$
	\mathfrak{T}_T \theta_{\varepsilon, m(\varepsilon)}:=\left(-\theta_{\varepsilon, m(\varepsilon)}(T-t): 0 \leq t \leq T\right)\overset{\,\mathrm{d}}{=}\left(\theta_{\varepsilon, m(\varepsilon)}(t): 0 \leq t \leq T\right) .
	$$
\end{lemma}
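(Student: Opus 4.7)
The plan is to reduce \eqref{Galerkin approximation equations-delta=0} (with $\beta=\alpha/2$) to a classical finite-dimensional It\^o SDE on $H_m$, show that $\mu_m:=\mathcal{G}(0,\varepsilon P_m/2)$ is invariant, and then verify the detailed-balance condition for time reversal composed with the involution $\iota\theta:=-\theta$. Since $\theta_{\varepsilon,m}(t)\in H_m$ for every $t$, the equation reads
\begin{equation*}
\mathrm{d}\theta_{\varepsilon,m} = \bigl(-\Lambda^{2\alpha}\theta_{\varepsilon,m} + N(\theta_{\varepsilon,m})\bigr)\,\mathrm{d}t + \sqrt{\varepsilon}\,\Lambda^{\alpha}P_m\,\mathrm{d}W,\qquad N(\theta):=-P_m\bigl(u_\theta\cdot\nabla\theta\bigr),
\end{equation*}
with additive, constant, non-degenerate noise on the finite-dimensional Hilbert space $H_m$. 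The drift splits into a linear dissipative part $-\Lambda^{2\alpha}\theta$, which is odd under $\iota$, and a quadratic part $N(\theta)$, which is $\iota$-even since $u_{-\theta}\cdot\nabla(-\theta)=u_\theta\cdot\nabla\theta$. This parity split together with the Fokker--Planck identity will deliver both invariance and reversibility.

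First I would verify $L^{\ast}\rho_m=0$ for the Lebesgue density $\rho_m(\theta)\propto\exp(-\|\theta\|_H^2/\varepsilon)$ of $\mu_m$. The Ornstein--Uhlenbeck subsystem $\mathrm{d}\theta=-\Lambda^{2\alpha}\theta\,\mathrm{d}t+\sqrt{\varepsilon}\Lambda^{\alpha}P_m\,\mathrm{d}W$ admits $\mu_m$ as its reversible invariant law by the standard Lyapunov-equation calculation, so it remains to show that the vector field $N$ preserves $\rho_m$. I will verify the two separate identities $\langle N(\theta),\theta\rangle_H=0$ and $\nabla_\theta\!\cdot\!N(\theta)=0$, which together imply $\nabla_\theta\!\cdot\!(N\rho_m)=0$. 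The energy cancellation is immediate from $\nabla\cdot u_\theta=0$ and integration by parts. For the phase-space divergence I would introduce the trilinear form $B(a,b,c):=\langle u_a\cdot\nabla b,c\rangle$ with its standard skew-symmetry $B(a,b,c)+B(a,c,b)=0$, write $N_k(\theta)=-B(\theta,\theta,e_k)$, and compute
\begin{equation*}
\nabla_\theta\!\cdot\!N(\theta) = \sum_{0<|k|\leq m}\partial_{\theta_k}N_k(\theta) = \sum_k B(e_k,e_k,\theta) = \int_{\mathbb{T}^2}\theta(x)\sum_k\bigl(R^\perp e_k\cdot\nabla e_k\bigr)(x)\,\mathrm{d}x.
\end{equation*}
A pointwise Fourier computation then gives $R^\perp e_k\cdot\nabla e_k\equiv 0$, since the Fourier symbol of $R^\perp e_k$ is proportional to $k^\perp$ while that of $\nabla e_k$ is proportional to $k$, and $k\cdot k^\perp=0$.

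Once invariance is established, the process started from $\mu_m$ is stationary, and reversibility under $\iota$ composed with time reversal reduces to a detailed-balance identity. Writing the drift as $b(\theta) = -\tfrac{1}{2}D\nabla V + N(\theta)$ with $D=\varepsilon\Lambda^{2\alpha}P_m$ and $V(\theta)=\|\theta\|_H^2/\varepsilon$, the dissipative piece $-\tfrac{1}{2}D\nabla V=-\Lambda^{2\alpha}\theta$ is $\iota$-odd and the $\mu_m$-divergence-free piece $N$ is $\iota$-even, which is exactly the reversibility criterion for a non-degenerate diffusion under an involution. Equivalently, I would compute the $L^2(\mu_m)$-adjoint generator
\begin{equation*}
L^{\ast}g = -b\cdot\nabla g + \tfrac{\varepsilon}{2}\,\mathrm{tr}\bigl(\Lambda^{2\alpha}P_m D^2 g\bigr) + \varepsilon\bigl(\Lambda^{2\alpha}P_m\nabla g\bigr)\cdot\nabla\log\rho_m,
\end{equation*}
and check $RL^{\ast}R=L$ for $(Rf)(\theta):=f(-\theta)$, which follows directly from the parities above together with $\nabla\log\rho_m=-2\theta/\varepsilon$. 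Combined with stationarity, this identity of generators gives $\mathfrak{T}_T\theta_{\varepsilon,m}\overset{\mathrm{d}}{=}\theta_{\varepsilon,m}$. The only delicate step is the phase-space Liouville property $\nabla_\theta\!\cdot\!N\equiv 0$ in the second step, a finite-dimensional analogue of Hamiltonian conservation for the SQG flow that ultimately rests on the orthogonality $k\cdot k^\perp=0$ built into $u_\theta=R^\perp\theta$.
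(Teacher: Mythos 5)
Your proposal is correct and follows essentially the same route as the paper: reduce to the finite-dimensional Galerkin SDE, split the generator into an Ornstein--Uhlenbeck part that is symmetric in $L^2(\mathcal{G})$ and a nonlinear drift part that is antisymmetric, and conclude by matching the generator of the sign-flipped process with that of the time-reversed process. Your explicit verification of the two identities $\langle N(\theta),\theta\rangle_H=0$ and $\nabla_\theta\cdot N=0$ is precisely the integration-by-parts computation the paper invokes to establish $\langle f,\mathcal{L}_2 g\rangle_{L^2(\mathcal{G})}=-\langle\mathcal{L}_2 f,g\rangle_{L^2(\mathcal{G})}$, so the arguments coincide.
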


\begin{proof}
	The proof is elementary. See \cite[Proposition 4.1]{Tot20} for details.
\end{proof}

\begin{proof}[Proof of Theorem \ref{theorem-energy-equality}] 
	We begin by proving that $ \mathcal{I}(\mathfrak{T}_T\theta)=\mathcal{I}(\theta). $ 
	Suppose that $ \varepsilon,m(\varepsilon) $ and $ \theta_{\varepsilon, m(\varepsilon)} $ satisfy the conditions in  Lemma \ref{time-revesible-property}. Let $ \mu_{\varepsilon, m(\varepsilon)} $ be the law of $ \theta_{\varepsilon, m(\varepsilon)} $ on $\mathbb{X}_{\alpha,\alpha/2}$ and let $ \tilde{\mu}_{\varepsilon,m(\varepsilon)} :=\mu_{\varepsilon,m(\varepsilon)} \circ \mathfrak{T}_T^{-1}$.  
	For any  $ \eta>0, $ it follows from Lemma \ref{lemma-lsc} that there exists an open set $ G$ in $ \mathbb{X}_{\alpha,\alpha/2} $ such that $  \theta\in G $ and $ \mathcal{I}(\hat\theta)> \mathcal{I}(\mathfrak{T}_T\theta)-\eta$ for all $ \hat\theta\in \mathfrak{T}_T\overline{G}.$ Moreover, the continuity of $ \mathfrak{T}_T:\mathbb{X}_{\alpha,\alpha/2}\rightarrow\mathbb{X}_{\alpha,\alpha/2} $ implies that $ \mathfrak{T}_T^{-1}\overline{G}=\mathfrak{T}_T\overline{G} $ is closed in $ \mathbb{X}_{\alpha,\alpha/2}. $
	According to Proposition \ref{prop-restricted-lower-bound},  Lemma \ref{time-revesible-property}, and  Proposition \ref{upper-bound}, we have  \begin{equation*}
		\begin{aligned}
			-\mathcal{I}(\theta)\leq\liminf _{\varepsilon \rightarrow 0} \varepsilon &\log \mu_{\varepsilon,m(\varepsilon)}(G)=\liminf _{\varepsilon \rightarrow 0} \varepsilon \log \tilde{\mu}_{\varepsilon,m(\varepsilon)}(G)\leq\limsup _{\varepsilon \rightarrow 0} \varepsilon \log \tilde{\mu}_{\varepsilon,m(\varepsilon)}(\overline{G})\\&=\limsup _{\varepsilon \rightarrow 0} \varepsilon \log \mu_{\varepsilon,m(\varepsilon)} (\mathfrak{T}_T\overline{G})\leq -\inf_{ \hat\theta\in \mathfrak{T}_T\overline{G}}\mathcal{I}( \hat\theta)\leq-\mathcal{I}(\mathfrak{T}_T\theta)+\eta.
		\end{aligned}
	\end{equation*}
	Due to the arbitrariness of $ \eta, $ we deduce that $ \mathcal{I}(\mathfrak{T}_T\theta)\leq\mathcal{I}(\theta). $ Since  $\mathcal{R}=\mathfrak{T}_T \mathcal{R}$, we can replace $ \theta $ by $\mathfrak{T}_T\theta$ to obtain that $ \mathcal{I}(\mathfrak{T}_T\theta)=\mathcal{I}(\theta). $
	
	Thanks to Remark \ref{uniqueness-of-g},  $ g $ is the unique element in $ L^2([0,T];H)$ such that $ \theta $ is a weak solution of \eqref{PDE-SQG} for $ g$. Thus
	\begin{equation}\label{eq-I-theta}
		\mathcal{I}(\theta)=\|\theta(0)\|^2_{H}+\frac{1}{2}\big\|g\big\|^2_{L^2([0,T];H)}.
	\end{equation}
	Now we define $\tilde{g}: [0,T]\times\mathbb{T}^2\rightarrow \mathbb{R}$ by $ \tilde{g}(t,x):=g(T-t,x)-2\Lambda^{\alpha}(\mathfrak{T}_T\theta)$. We have $ \tilde{g}\in L^2([0,T];H)$ since $ \theta\in \mathcal{R}\subset \mathbb{X}_{\alpha,\alpha/2} .$
	Moreover, a direct calculation shows that $ \tilde{\theta} $ is a weak solution of \eqref{PDE-SQG} with $ \tilde{g}$ in the sense of Definition \ref{def-weak-solution-skeleton-equation}. 
	Hence
	\begin{equation}\label{eq-I-theta-reversible}
		\begin{aligned}
			\mathcal{I}( \mathfrak{T}_T\theta)&=\|\theta(T)\|^2_{H}+\frac{1}{2}\big\|\tilde{g}\big\|^2_{L^2([0,T];H)}\\&=\|\theta(T)\|^2_{H}+\frac{1}{2}\big\|g\big\|^2_{L^2([0,T];H)}+2\int_0^T\|\theta(s)\|_{H^\alpha(\mathbb{T}^2)}^2-2\int_{0}^{T}\langle\Lambda^{\alpha}\theta,g\rangle\,\mathrm{d}s.
		\end{aligned}
	\end{equation}
	Consequently, in combination with $	\mathcal{I}( \mathfrak{T}_T\theta)=\mathcal{I}(\theta)$, \eqref{eq-I-theta} and \eqref{eq-I-theta-reversible} lead to the kinetic energy equality 
	\begin{equation*}
		\frac{1}{2}\|\theta(T)\|_H^2+\int_0^T\|\theta(s)\|_{H^\alpha(\mathbb{T}^2)}^2\,\mathrm{d}s=\frac{1}{2}\|\theta(0)\|_H^2+\int_0^T\langle\Lambda^{\alpha}\theta, g\rangle\,\mathrm{d}s.
	\end{equation*}
\end{proof}
\begin{proof}[Proof of Corollary \ref{Main-Theorem-corollary}] 
	The proof follows from an argument analogous to \cite[Remark 2.8]{GHW24_LLNS}, which is similar in spirit to the probabilistic proof of Theorem \ref{theorem-energy-equality}. Thus we omit it.
\end{proof}
The following lemma will be utilized in the proof of Theorem \ref{theorem-generalized-energy-equality}.
\begin{lemma}\label{lemma-energy-ineq-on-C}
	Let $ \mathcal{C}_0\subset \mathbb{X}_{\alpha,\beta} $ be a weak-strong uniqueness class of \eqref{PDE-SQG} and let $\mathcal{C}=\overline{\mathcal{C}_0}^{\mathcal{I}}$. Suppose that $ \theta\in \mathcal{C} $ is  a weak solution of \eqref{PDE-SQG} for some $g \in L^2([0, T];H)$ in the sense of Definition \ref{def-weak-solution-skeleton-equation}. Then the $ H^{\alpha-2\beta} $-energy inequality holds:
	\begin{equation*}
		\frac{1}{2}\|\theta(T)\|_{H^{\alpha-2\beta}(\mathbb{T}^2)}^2+\int_0^T\|\theta(s)\|_{H^{2\alpha-2\beta}(\mathbb{T}^2)}^2\,\mathrm{d}s\leq\frac{1}{2}\|\theta(0)\|_{H^{\alpha-2\beta}(\mathbb{T}^2)}^2+\int_0^T\langle\Lambda^{2\alpha-2\beta}\theta, g\rangle\,\mathrm{d}s.
	\end{equation*}
\end{lemma}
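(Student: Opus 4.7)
The plan is to deduce the inequality on all of $\mathcal{C}$ from the stronger assertion, valid on $\mathcal{C}_0$, that any element of $\mathcal{C}_0$ with finite $\mathcal{I}$ satisfies the $H^{\alpha-2\beta}$-energy inequality (by weak-strong uniqueness against a Leray-type solution), and then to pass to the limit along the approximating sequence provided by the definition of the $\mathcal{I}$-closure.

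First I would observe that, because $\theta$ is a weak solution of \eqref{PDE-SQG} with control $g \in L^2([0,T];H)$, Remark \ref{uniqueness-of-g} yields $\mathcal{I}(\theta) = \|\theta(0)\|_{H^{\alpha-2\beta}(\mathbb{T}^2)}^2 + \tfrac{1}{2}\|g\|_{L^2([0,T];H)}^2 < \infty$. By $\theta \in \overline{\mathcal{C}_0}^{\mathcal{I}}$, pick $\{\theta^{(n)}\} \subset \mathcal{C}_0$ with $\theta^{(n)} \to \theta$ in $\mathbb{X}_{\alpha,\beta}$ and $\mathcal{I}(\theta^{(n)}) \to \mathcal{I}(\theta)$. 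For $n$ large, $\mathcal{I}(\theta^{(n)}) < \infty$, so each $\theta^{(n)}$ is a weak solution of \eqref{PDE-SQG} with a unique control $g^{(n)} \in L^2([0,T];H)$ (Remark \ref{uniqueness-of-g}) and $\mathcal{I}(\theta^{(n)}) = \|\theta^{(n)}(0)\|_{H^{\alpha-2\beta}(\mathbb{T}^2)}^2 + \tfrac{1}{2}\|g^{(n)}\|_{L^2([0,T];H)}^2$.

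Next I would invoke the deterministic counterpart of Theorem \ref{thm-existence-gamma} (obtained by setting $\varepsilon = 0$ in the Galerkin construction there; the nonlinear passage to the limit rests on the same commutator decomposition \eqref{decomposition}--\eqref{tempered distribution}) to produce, for each $n$, a weak solution $\tilde{\theta}^{(n)}$ of \eqref{PDE-SQG} with the same initial condition $\theta^{(n)}(0)$ and control $g^{(n)}$ which satisfies \eqref{energy-ineq-skeleton}. Since $\theta^{(n)} \in \mathcal{C}_0$, Definition \ref{def-weak-strong-uniqueness} applied to the pair $(\theta^{(n)}, \tilde{\theta}^{(n)})$ forces $\theta^{(n)} = \tilde{\theta}^{(n)}$ in $\mathbb{X}_{\alpha,\beta}$, so $\theta^{(n)}$ itself obeys
\begin{equation*}
\tfrac{1}{2}\|\theta^{(n)}(T)\|_{H^{\alpha-2\beta}(\mathbb{T}^2)}^2 + \int_0^T \|\theta^{(n)}(s)\|_{H^{2\alpha-2\beta}(\mathbb{T}^2)}^2 \,\mathrm{d}s \leq \tfrac{1}{2}\|\theta^{(n)}(0)\|_{H^{\alpha-2\beta}(\mathbb{T}^2)}^2 + \int_0^T \langle \Lambda^{2\alpha-2\beta}\theta^{(n)}, g^{(n)}\rangle \,\mathrm{d}s.
\end{equation*}

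The remaining task is to pass to the limit $n \to \infty$. The left-hand side is handled by lower semi-continuity of the $H^{\alpha-2\beta}(\mathbb{T}^2)$-norm under weak convergence at time $T$ and of the $L^2([0,T];H^{2\alpha-2\beta}(\mathbb{T}^2))$-norm under weak convergence on the time interval, both of which are consequences of $\theta^{(n)} \to \theta$ in $\mathbb{X}_{\alpha,\beta}$. For the right-hand side I would argue that $\|\theta^{(n)}(0)\|_{H^{\alpha-2\beta}(\mathbb{T}^2)} \to \|\theta(0)\|_{H^{\alpha-2\beta}(\mathbb{T}^2)}$ and $\|g^{(n)}\|_{L^2([0,T];H)} \to \|g\|_{L^2([0,T];H)}$: indeed, $\mathcal{I}_0$ and $\mathcal{I}_{dyna}$ are separately lower semi-continuous (the latter by Lemma \ref{lemma-lsc}) and non-negative, and the sum $\mathcal{I}_0(\theta^{(n)}(0)) + \mathcal{I}_{dyna}(\theta^{(n)})$ converges to $\mathcal{I}_0(\theta(0)) + \mathcal{I}_{dyna}(\theta)$, a purely elementary observation forcing each summand to converge. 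Combining these norm convergences with weak convergence of $\theta^{(n)}(0) \rightharpoonup \theta(0)$ in $H^{\alpha-2\beta}(\mathbb{T}^2)$ and of (a subsequence of) $g^{(n)} \rightharpoonup g$ in $L^2([0,T];H)$, where the weak limit of $g^{(n)}$ is identified as $g$ by passing to the limit in the weak formulation of the skeleton equation and using the uniqueness of the control in Remark \ref{uniqueness-of-g}, I would upgrade both to strong convergence in the respective Hilbert spaces. The forcing integral then passes to the limit as a weak-strong pairing: $\Lambda^{2\alpha-2\beta}\theta^{(n)} \rightharpoonup \Lambda^{2\alpha-2\beta}\theta$ weakly in $L^2([0,T];H)$ while $g^{(n)} \to g$ strongly.

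The main obstacle I anticipate is Step~3, specifically the identification of the weak limit of $\{g^{(n)}\}$ via the skeleton equation; this in turn depends on the nonlinear stability already proved in Lemma \ref{lemma-lsc} for the commutator form of the nonlinearity, which is precisely what makes the passage robust in the low-regularity regime $\beta = \alpha/2 + 1/4$. The deterministic Leray existence used in Step~2, while conceptually routine given Theorem \ref{thm-existence-gamma}, should be checked to transfer cleanly with $\varepsilon = 0$ and an external force $\Lambda^{2\beta}g^{(n)}$ replacing the noise increment.
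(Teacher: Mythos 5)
Your proposal is correct and follows essentially the same route as the paper: approximate $\theta$ by $\theta^{(n)}\in\mathcal{C}_0$, obtain the $H^{\alpha-2\beta}$-energy inequality for each $\theta^{(n)}$ by weak-strong uniqueness against a Leray-type solution with control $g^{(n)}$, identify the weak limit of $g^{(n)}$ as $g$ via the variational characterization and Remark \ref{uniqueness-of-g}, upgrade to strong convergence using $\mathcal{I}(\theta^{(n)})\to\mathcal{I}(\theta)$, and pass to the limit by lower semi-continuity and the weak-strong pairing. The only cosmetic difference is that you produce the comparison solution by a deterministic Galerkin scheme at $\varepsilon=0$, whereas the paper reuses the zero-noise limit of the controlled stochastic Galerkin construction from Lemma \ref{lemma-converge-to-delta-v}; both deliver the same object.
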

\begin{proof}
	By the definition of $\mathcal{C}$, there exists a sequence $\{\theta^{(n)}\}\subset \mathcal{C}_0$, such that $\theta^{(n)} \rightarrow \theta $ in $ \mathbb{X}_{\alpha,\beta}$ and $ \mathcal{I}\big(\theta^{(n)}\big) \rightarrow \mathcal{I}(\theta)$. Let $g^{(n)} \in L^2([0,T];H) $ be the control such that $\theta^{(n)} $ solves \eqref{PDE-SQG} weakly. We claim that $ g^{(n)} \rightarrow g $ in $L^2([0,T];H) $. In fact, $ \mathcal{I}\big(\theta^{(n)}\big) \rightarrow \mathcal{I}(\theta)$ implies that $ \{  g^{(n)} \} $ is uniformly  bounded in $L^2([0,T];H), $ hence weakly compact. For any $\varphi\in C^{\infty}([0,T];H^\infty(\mathbb{T}^2))$, let  $\Lambda_1^T(\varphi,\cdot), 	F^T(\varphi,\cdot)$ be defined  as in Section \ref{section-rate}. Suppose that $ g_0 $ is a limit point of  $ \{  g^{(n)} \} $ with respect to the weak topology. Then it follows from \eqref{convergence-lambda-1} that we can let $ n\rightarrow\infty $ in \eqref{riesz} with $ \Psi^\theta $ replaced by $  \Psi^{\theta^{(n)}} $ to obtain that \begin{equation*}
		F^T(\varphi,\theta)=\langle g_0,\Lambda^{2\beta}\varphi\rangle_{L^2([0,T];H)}.
	\end{equation*} That is, $ \theta $ is a weak solution of \eqref{PDE-SQG} for the control $ g_0. $
	We deduce from Remark \ref{uniqueness-of-g} that $ g=g_0 $ in  $L^2([0,T];H)$. Therefore, $ g^{(n)} \rightarrow g$ weakly in $L^2([0,T];H)$. Since $ \mathcal{I}\big(\theta^{(n)}\big) \rightarrow \mathcal{I}(\theta)$ implies  $ \|g^{(n)}\|_{L^2([0,T];H)}\rightarrow \|g\|_{L^2([0,T];H)},$ we can conclude that $ g^{(n)} \rightarrow g $ (strongly) in $L^2([0,T];H). $
	
	As in the proof of Lemma \ref{lemma-converge-to-delta-v}, $ \theta^{(n)}\in\mathcal{C}_0 $ satisfies the $ H^{\alpha-2\beta} $-energy inequality:
	\begin{equation*}
	\frac{1}{2}\|\theta^{(n)}(T)\|_{H^{\alpha-2\beta}(\mathbb{T}^2)}^2+\int_0^T\|\theta^{(n)}(s)\|_{H^{2\alpha-2\beta}(\mathbb{T}^2)}^2\,\mathrm{d}s\leq\frac{1}{2}\|\theta^{(n)}(0)\|_{H^{\alpha-2\beta}(\mathbb{T}^2)}^2+\int_0^T\langle\Lambda^{2\alpha-2\beta}\theta^{(n)}, g^{(n)}\rangle\,\mathrm{d}s.
	\end{equation*} Letting $ n\rightarrow\infty, $ due to the strong convergence of $g^{(n)}$, a repetition of Step 2  in the proof of  Lemma \ref{lemma-converge-to-delta-v} yields that 	\begin{equation*}
		\frac{1}{2}\|\theta(T)\|_{H^{\alpha-2\beta}(\mathbb{T}^2)}^2+\int_0^T\|\theta(s)\|_{H^{2\alpha-2\beta}(\mathbb{T}^2)}^2\,\mathrm{d}s\leq\frac{1}{2}\|\theta(0)\|_{H^{\alpha-2\beta}(\mathbb{T}^2)}^2+\int_0^T\langle\Lambda^{2\alpha-2\beta}\theta, g\rangle\,\mathrm{d}s.
	\end{equation*}
\end{proof}
\noindent\textit{Proof of Theorem \ref{theorem-generalized-energy-equality}.}
We are now in the setting of Lemma \ref{lemma-energy-ineq-on-C}. It follows  that 	\begin{equation}\label{generalize-energy-ineq-1}
	\frac{1}{2}\|\theta(T)\|_{H^{\alpha-2\beta}(\mathbb{T}^2)}^2+\int_0^T\|\theta(s)\|_{H^{2\alpha-2\beta}(\mathbb{T}^2)}^2\,\mathrm{d}s\leq\frac{1}{2}\|\theta(0)\|_{H^{\alpha-2\beta}(\mathbb{T}^2)}^2+\int_0^T\langle\Lambda^{2\alpha-2\beta}\theta, g\rangle\,\mathrm{d}s.
\end{equation}
Recall that the map $ \tilde{g}$ defined by $ \tilde{g}(t,x):=g(T-t,x)-2\Lambda^{\alpha}(\mathfrak{T}_T\theta)$ is an element of 
$ L^2([0,T];H)$ and $ \mathfrak{T}_T\theta $ is a weak solution of \eqref{PDE-SQG} for $ \tilde{g}$. Since $ \mathfrak{T}_T\theta\in\mathcal{R}\subset\mathcal{C}$, we can 
apply Lemma \ref{lemma-energy-ineq-on-C}  for $ \mathfrak{T}_T\theta $ and $ \tilde{g}$ to obtain 
\begin{equation}\label{generlized-energy-ineq-2}
	\begin{aligned}
		&\quad\frac{1}{2}\|\theta(0)\|_{H^{\alpha-2\beta}(\mathbb{T}^2)}^2+\int_0^T\|\theta(s)\|_{H^{2\alpha-2\beta}(\mathbb{T}^2)}^2\,\mathrm{d}s\leq\frac{1}{2}\|\theta(T)\|_{H^{\alpha-2\beta}(\mathbb{T}^2)}^2+\int_0^T\langle\Lambda^{2\alpha-2\beta}\mathfrak{T}_T\theta, \tilde g\rangle\,\mathrm{d}s\\&=\frac{1}{2}\|\theta(T)\|_{H^{\alpha-2\beta}(\mathbb{T}^2)}^2-\int_0^T\langle\Lambda^{2\alpha-2\beta}\theta,  g\rangle\,\mathrm{d}s+2\int_0^T\|\theta(s)\|_{H^{2\alpha-2\beta}(\mathbb{T}^2)}^2\,\mathrm{d}s.
	\end{aligned}
\end{equation}
Combining \eqref{generalize-energy-ineq-1} with \eqref{generlized-energy-ineq-2}, we conclude that \eqref{generlized-energy-eq} holds.

\

\section{Characterization of Quasi-potential}\label{section-quasipotential}
In this section, we will analyze the explicit representation of the quasi-potential $ \mathcal{U}. $ This potentially indicates a new probabilistic perspective of the uniqueness problem for the deterministic SQG equation.   

\subsection{Conditional equivalence}
To begin with, we need to extend the definition of the weak solution to infinite time  intervals.
\begin{definition}\label{def-sol-infinity}
	Let $g\in L_{loc}^2([0,\infty);H)$. We say that $\theta$ is a weak solution of  
	\begin{equation}\label{PDE-infty}
		\partial_t\theta=-\Lambda^{2\alpha}\theta-R^{\perp}\theta\cdot\nabla \theta+\Lambda^{2\beta} g,
	\end{equation}
	if $\theta\in L^\infty_{loc}([0,\infty);H^{\alpha-2\beta}(\mathbb{T}^2)) \cap L^2_{loc}([0,\infty);H^{2\alpha-2\beta}(\mathbb{T}^2))\cap C([0,\infty);H_w^{\alpha-2\beta}(\mathbb{T}^2)),$ and for every $\varphi\in  C^\infty([0,\infty);H^\infty(\mathbb{T}^2)), $  for all $ t>0, $ 
	\begin{equation}\label{weak-solution-infinity}
		\begin{aligned}
			\langle \theta(t),\varphi(t)\rangle&=\langle \theta(0),\varphi(0)\rangle-\int^t_0\langle \theta,\Lambda^{2\alpha}\varphi\rangle  \,\mathrm{d}s-\frac{1}{2}\int^t_0\left\langle R_2 \theta,\left[\Lambda, \partial_1 \varphi\right]\Lambda^{-1} \theta\right\rangle \,\mathrm{d}s\\&+\frac{1}{2}\int_0^t\left\langle R_1 \theta  ,\left[\Lambda, \partial_2 \varphi\right]\Lambda^{-1} \theta\right\rangle \,\mathrm{d}s+\int^t_0\langle \theta,\partial_s\varphi\rangle \,\mathrm{d}s+\int^t_0\langle\Lambda^{2\beta}\varphi,g\rangle \,\mathrm{d}s.
		\end{aligned}
	\end{equation}
\end{definition}
With the above definition, we can similarly define the weak-strong uniqueness class on $ [0,\infty) $
as in Definition \ref{def-weak-strong-uniqueness-intro}. Let  $$ \mathbb{X}^\infty_{\alpha,\beta}:= L^2_{loc}([0,\infty);H^{\alpha-2\beta}(\mathbb{T}^2)) \cap	L^2_{w,loc}([0,\infty);H^{2\alpha-2\beta}(\mathbb{T}^2)) \cap C([0,\infty);H^{\alpha-2\beta}_w(\mathbb{T}^2)).$$

\begin{definition}[Weak-Strong Uniqueness Class on $ [0,\infty) $]\label{def-weak-strong-uniqueness-infty}
	We say that
	$\mathcal{C}_0 \subset \mathbb{X}^\infty_{\alpha,\beta}$ is a weak-strong uniqueness class of \eqref{PDE-infty}  if for every control $g\in L^2_{loc}([0,\infty);H)$, the following holds:   for arbitrary two weak solutions $\theta_1,\theta_2$ of the skeleton equation \eqref{PDE-infty} in the sense of Definition \ref{def-sol-infinity}  with the same initial data $\theta_1(0)=\theta_2(0)$ and the same control $ g $, we have $\theta_1=\theta_2$ in  $ \mathbb{X}_{\alpha,\beta}^\infty$ as long as $\theta_1\in\mathcal{C}_0$ and  $\theta_2$ satisfies the $ H^{\alpha-2\beta}$-energy inequality: for every $ t>0 $,
	\begin{equation}
		\frac{1}{2}\|\theta_2(t)\|_{H^{\alpha-2\beta}(\mathbb{T}^2)}^2+\int_0^t\|\theta_2(s)\|_{H^{2\alpha-2\beta}(\mathbb{T}^2)}^2 \,\mathrm{d}s \leq \frac{1}{2}\left\|\theta_2(0)\right\|_{H^{\alpha-2\beta}(\mathbb{T}^2)}^2+\int_0^t\langle\Lambda^{2\alpha-2\beta} \theta_2, g\rangle \,\mathrm{d}s.
	\end{equation}
\end{definition}
Let $ \mathcal{C}_0 $ be a  weak-strong uniqueness class on $ [0,\infty).$ Denote \begin{equation*}
	\begin{aligned}
		\mathcal{A}_0:=\big\lbrace \theta\in\mathcal{C}_0: 	\partial_t\theta\in L^2([0,\infty) ; H^{\alpha-2\beta}(\mathbb{T}^2)), R^{\perp}\theta\cdot\nabla \theta\in L^2([0,\infty) ;  H^{\alpha-2\beta}(\mathbb{T}^2))&, \\ \lim\limits_{t\rightarrow \infty}\|\theta(t)\|_{H^{\alpha-2\beta}(\mathbb{T}^2)}=0& \big\rbrace.
	\end{aligned}
\end{equation*}		
For any $ \tilde{\theta}:(-\infty,0]\times\mathbb{T}^2\rightarrow\mathbb{R}$ that satisfying $ -\tilde{\theta}(-\cdot)\in \mathcal{A}_0$, we define
\begin{equation*}
	S_{-\infty}(\tilde\theta):=\frac{1}{2}\int_{-\infty}^{0}\|	\partial_t\tilde\theta(s)+\Lambda^{2\alpha}\tilde\theta(s)+R^{\perp}\tilde\theta(s)\cdot\nabla\tilde \theta(s)\|_{H^{-2\beta}(\mathbb{T}^2)}^2\,\mathrm{d}s,
\end{equation*}
and define the map  $ \mathcal{U}: H^{\alpha-2\beta}(\mathbb{T}^2)\rightarrow [0,\infty)$  as 
\begin{equation}\label{restrictquasipotential}
	\mathcal{U}(\phi):=\inf \left\{S_{-\infty}(\tilde{\theta}): -\tilde{\theta}(-\cdot)\in \mathcal{A}_0,\, \tilde\theta(0)=\phi\right\}.
\end{equation}

\begin{proposition}\label{quasi-Gaussian}
	Let $ \mathcal{C}_0,  \mathcal{A}_0,  \mathcal{U} $ and $ S_{-\infty} $ be defined as above. 
	\item (i)  For every $\phi\in H^{\alpha-2\beta}(\mathbb{T}^2)$, it holds that $\mathcal{U}(\phi)\geq\|\phi\|_{H^{\alpha-2\beta}(\mathbb{T}^2)}^2$.
	\item (ii)
	The equality $\mathcal{U}(\phi)=\|\phi\|_{H^{\alpha-2\beta}(\mathbb{T}^2)}^2$ holds for a given $\phi\in H^{\alpha-2\beta}(\mathbb{T}^2)$ as long as the equation
	\begin{equation}\label{PDE-tilde-theta}
		\left\{\begin{array}{l}
			\partial_t\bar\theta=-\Lambda^{2\alpha} \bar\theta-R^{\perp}\bar\theta\cdot\nabla \bar\theta, \\
			\bar\theta(0)=\phi,
		\end{array}\right.
	\end{equation}
	admits a unique weak solution in $ \mathcal{A} _0 $ in the sense of Definition \ref{def-sol-infinity}.
\end{proposition}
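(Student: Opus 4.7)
The proof splits into two matching inequalities. For both, the central tool is the $H^{\alpha-2\beta}$-energy identity obtained by pairing the equation $\Lambda^{2\beta}g = \partial_s\tilde\theta + \Lambda^{2\alpha}\tilde\theta + R^\perp\tilde\theta\cdot\nabla\tilde\theta$ with $\Lambda^{2\alpha-4\beta}\tilde\theta$, where I set $g := \Lambda^{-2\beta}(\partial_s\tilde\theta + \Lambda^{2\alpha}\tilde\theta + R^\perp\tilde\theta\cdot\nabla\tilde\theta)$ so that $S_{-\infty}(\tilde\theta) = \tfrac{1}{2}\int_{-\infty}^0 \|g\|_H^2\,\mathrm{d}s$. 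In both parameter regimes $\beta\in\{\alpha/2,\,\alpha/2+1/4\}$, the transport contribution $\langle R^\perp\tilde\theta\cdot\nabla\tilde\theta,\Lambda^{2\alpha-4\beta}\tilde\theta\rangle$ vanishes: for $\beta=\alpha/2$ this follows from $\nabla\cdot R^\perp = 0$, and for $\beta=\alpha/2+1/4$ from the identity $\langle\theta R^\perp\theta,R\theta\rangle = 0$ after integration by parts, the two Riesz components of $R^\perp$ and $R$ cancelling pairwise.

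For the lower bound (i), given any admissible $\tilde\theta$, the energy identity together with Young's inequality in its sharp form $\langle g,\Lambda^{2\alpha-2\beta}\tilde\theta\rangle \leq \tfrac{1}{4}\|g\|_H^2 + \|\tilde\theta\|_{H^{2\alpha-2\beta}}^2$ causes the dissipation term to cancel and yields
\[
\frac{\mathrm{d}}{\mathrm{d}s}\|\tilde\theta(s)\|_{H^{\alpha-2\beta}}^2 \leq \frac{1}{2}\|g(s)\|_H^2.
\]
Integrating from $-T$ to $0$ and using $\|\tilde\theta(-T)\|_{H^{\alpha-2\beta}}\to 0$ as $T\to\infty$ (which is forced by $-\tilde\theta(-\cdot)\in\mathcal{A}_0$) gives $\|\phi\|_{H^{\alpha-2\beta}}^2 \leq \tfrac{1}{2}\int_{-\infty}^0 \|g\|_H^2 = S_{-\infty}(\tilde\theta)$. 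Taking the infimum over all admissible $\tilde\theta$ produces $\mathcal{U}(\phi) \geq \|\phi\|_{H^{\alpha-2\beta}}^2$.

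For the upper bound (ii), let $\bar\theta\in\mathcal{A}_0$ denote the unique solution of \eqref{PDE-tilde-theta}. Since $R^\perp\theta\cdot\nabla\theta$ is quadratic in $\theta$, the deterministic SQG equation is invariant under $\theta\mapsto -\theta$; the defining conditions of $\mathcal{A}_0$ only involve norms of $\theta$, so they are likewise invariant. Hence $\hat\theta := -\bar\theta\in\mathcal{A}_0$ is a weak solution of the SQG equation with initial data $-\phi$. Following the strategy of \cite{BCF15}, I take the time-reversed, negated candidate $\tilde\theta(s) := -\hat\theta(-s)$ for $s\in(-\infty,0]$; then $\tilde\theta(0) = \phi$ and $-\tilde\theta(-\cdot) = \hat\theta\in\mathcal{A}_0$. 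A direct computation using $\hat\theta(-s) = -\tilde\theta(s)$ and the SQG equation for $\hat\theta$ yields $\partial_s\tilde\theta = \Lambda^{2\alpha}\tilde\theta - R^\perp\tilde\theta\cdot\nabla\tilde\theta$, and therefore
\[
\partial_s\tilde\theta + \Lambda^{2\alpha}\tilde\theta + R^\perp\tilde\theta\cdot\nabla\tilde\theta = 2\Lambda^{2\alpha}\tilde\theta.
\]
Consequently $S_{-\infty}(\tilde\theta) = 2\int_{-\infty}^0 \|\tilde\theta\|_{H^{2\alpha-2\beta}}^2\,\mathrm{d}s = 2\int_0^\infty \|\hat\theta\|_{H^{2\alpha-2\beta}}^2\,\mathrm{d}\tau$, and the $H^{\alpha-2\beta}$-energy equality for $\hat\theta$ (with $g=0$), combined with $\|\hat\theta(\tau)\|_{H^{\alpha-2\beta}}\to 0$, produces $\int_0^\infty \|\hat\theta\|_{H^{2\alpha-2\beta}}^2 = \tfrac{1}{2}\|\phi\|_{H^{\alpha-2\beta}}^2$. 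Hence $\mathcal{U}(\phi) \leq S_{-\infty}(\tilde\theta) = \|\phi\|_{H^{\alpha-2\beta}}^2$, which matches (i).

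The main technical delicacy lies in justifying the $H^{\alpha-2\beta}$-energy identity and the vanishing of the transport term rigorously in the low-regularity setting of $\mathcal{A}_0\subset\mathbb{X}^\infty_{\alpha,\beta}$. Since in general $\tilde\theta$ only lies in $L^2_{\mathrm{loc}}(H^{2\alpha-2\beta})$ with a possibly negative exponent, the pairing $\langle R^\perp\tilde\theta\cdot\nabla\tilde\theta,\Lambda^{2\alpha-4\beta}\tilde\theta\rangle$ must be interpreted through the Littlewood-Paley decomposition \eqref{decomposition} and the commutator estimate in Lemma \ref{lemma-calderon}, in the spirit of the weak solution framework developed in Section \ref{section-Preliminary}. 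The additional integrability imposed by $\mathcal{A}_0$, namely $\partial_t\bar\theta,\,R^\perp\bar\theta\cdot\nabla\bar\theta \in L^2([0,\infty);H^{\alpha-2\beta})$, guarantees that all pairings above are finite and that the energy identity (rather than merely an inequality) can be established through a standard mollification-and-commutator argument.
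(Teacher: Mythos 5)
Your part (i) is correct and is the paper's argument in disguise: the sharp Young inequality $\langle g,\Lambda^{2\alpha-2\beta}\tilde\theta\rangle\le\tfrac14\|g\|_H^2+\|\tilde\theta\|_{H^{2\alpha-2\beta}}^2$ is exactly the completion of squares that the paper writes out in \eqref{S-infty-}, and the vanishing of the transport term in both regimes is what the paper encodes in the orthogonality $\langle R^\perp\tilde\theta\cdot\nabla\tilde\theta,\tilde\theta\rangle_{H^{\alpha-2\beta}(\mathbb{T}^2)}=0$, which the integrability conditions in $\mathcal{A}_0$ make rigorous.

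Part (ii) contains a genuine error. The SQG equation is \emph{not} invariant under $\theta\mapsto-\theta$: the nonlinearity $R^{\perp}\theta\cdot\nabla\theta$ is quadratic, hence even under negation, while $\partial_t\theta$ and $-\Lambda^{2\alpha}\theta$ are odd. If $\partial_t\bar\theta=-\Lambda^{2\alpha}\bar\theta-R^{\perp}\bar\theta\cdot\nabla\bar\theta$, then $\hat\theta:=-\bar\theta$ satisfies $\partial_t\hat\theta=-\Lambda^{2\alpha}\hat\theta+R^{\perp}\hat\theta\cdot\nabla\hat\theta$, with the wrong sign on the transport term, so $\hat\theta$ is not a weak solution of \eqref{PDE-tilde-theta} with data $-\phi$. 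Consequently your candidate $\tilde\theta(s)=-\hat\theta(-s)=\bar\theta(-s)$ actually satisfies $\partial_s\tilde\theta=\Lambda^{2\alpha}\tilde\theta+R^{\perp}\tilde\theta\cdot\nabla\tilde\theta$, whence
\begin{equation*}
\partial_s\tilde\theta+\Lambda^{2\alpha}\tilde\theta+R^{\perp}\tilde\theta\cdot\nabla\tilde\theta=2\Lambda^{2\alpha}\tilde\theta+2R^{\perp}\tilde\theta\cdot\nabla\tilde\theta,
\end{equation*}
and $S_{-\infty}(\tilde\theta)$ retains the nonlinear contribution and is not $\|\phi\|_{H^{\alpha-2\beta}(\mathbb{T}^2)}^2$ in general. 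The correct move, and the one the paper makes, is to apply the combined time-reversal-and-negation directly to the solution, $\tilde\theta(s):=-\bar\theta(-s)$; this turns the SQG flow into a solution of the reversed equation $\partial_s\tilde\theta-\Lambda^{2\alpha}\tilde\theta+R^{\perp}\tilde\theta\cdot\nabla\tilde\theta=0$, so the residual is $2\Lambda^{2\alpha}\tilde\theta$ alone and \eqref{S-infty-} gives $S_{-\infty}(\tilde\theta)=\|\tilde\theta(0)\|_{H^{\alpha-2\beta}(\mathbb{T}^2)}^2$. Note, however, that this candidate ends at $\tilde\theta(0)=-\bar\theta(0)$: the negation cannot be traded away by a symmetry of the equation, so to terminate at $\phi$ one must solve \eqref{PDE-tilde-theta} from $-\phi$ (harmless for the value $\|\phi\|^2_{H^{\alpha-2\beta}(\mathbb{T}^2)}$, but a sign that must be tracked in the hypothesis; the paper's own write-up elides it too). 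A secondary issue: even granting your symmetry claim, $-\bar\theta\in\mathcal{A}_0$ would require $-\bar\theta\in\mathcal{C}_0$, and an abstract weak-strong uniqueness class need not be closed under negation — only the remaining conditions defining $\mathcal{A}_0$ are norm conditions.
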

\begin{proof}
	(i). 
	It is sufficient to discuss the case that $\mathcal{U}(\phi)<\infty. $ In this case, there exists a function  $\tilde\theta:(-\infty,0]\times\mathbb{T}^2\rightarrow\mathbb{R}$ such that $-\tilde\theta(-\cdot)\in\mathcal{A}_0$, $ \tilde\theta(0)=\phi$ and $ \langle R^{\perp}\tilde\theta(s)\cdot\nabla \tilde\theta(s), \tilde\theta(s)\rangle_{H^{\alpha-2\beta}(\mathbb{T}^2)}=0 $ for all $ s<0. $  Then it follows from the definition of $\mathcal{A}_0$ that 
	\begin{equation*}
		\begin{aligned}
			&\left\|\Lambda^{-2\beta}(	\partial_t\tilde\theta(s)+\Lambda^{2\alpha} \tilde\theta(s)+R^{\perp}\tilde\theta(s)\cdot\nabla \tilde\theta(s))\right\|_{H}^2\\=&\left\|\Lambda^{-2\beta}(	\partial_t\tilde\theta(s)-\Lambda^{2\alpha} \tilde\theta(s)+R^{\perp}\tilde\theta(s)\cdot\nabla \tilde\theta(s))\right\|_{H}^2
			+4\|\Lambda^{2\alpha-2\beta} \tilde\theta(s)\|_{H}^2\\&+4\left\langle\Lambda^{-2\beta}(	\partial_t\tilde\theta(s)-\Lambda^{2\alpha} \tilde\theta(s)+R^{\perp}\tilde\theta(s)\cdot\nabla \tilde\theta(s)),  \Lambda^{2\alpha-2\beta}\tilde\theta(s)\right\rangle \\=&\left\|\Lambda^{-2\beta}(	\partial_t\tilde\theta(s)-\Lambda^{2\alpha} \tilde\theta(s)+R^{\perp}\tilde\theta(s)\cdot\nabla \tilde\theta(s))\right\|_H^2+4\left\langle \Lambda^{\alpha-2\beta} 	\partial_t\tilde\theta(s),\Lambda^{\alpha-2\beta}\tilde\theta(s)\right\rangle.
		\end{aligned}
	\end{equation*}
	Therefore, by the definition of $ 
	S_{-\infty}$, we have
	\begin{align}\label{S-infty-}
		&\quad	S_{-\infty}(\tilde\theta) \notag\\&=\frac{1}{2} \int_{-\infty}^0\left\|\Lambda^{-2\beta}(	\partial_t\tilde\theta(s)-\Lambda^{2\alpha} \tilde\theta(s)+R^{\perp}\tilde\theta(s)\cdot\nabla \tilde\theta(s))\right\|_{H}^2 \,\mathrm{d}s+\int_{-\infty}^0 \frac{\mathrm{d}}{\mathrm{d} s}\|\tilde\theta(s)\|_{H^{\alpha-2\beta}(\mathbb{T}^2)}^2 \,\mathrm{d}s \notag\\
		& =\frac{1}{2} \int_{-\infty}^0\left\|\Lambda^{-2\beta}(	\partial_t\tilde\theta(s)-\Lambda^{2\alpha} \tilde\theta(s)+R^{\perp}\tilde\theta(s)\cdot\nabla \tilde\theta(s))\right\|_{H}^2 \,\mathrm{d}s+\|\tilde\theta(0)\|_{H^{\alpha-2\beta}(\mathbb{T}^2)}^2 .
	\end{align}			
	Thus $ \tilde\theta(0)=\phi$ implies $\mathcal{U}(\phi)\geq\|\phi\|_{H^{\alpha-2\beta}(\mathbb{T}^2)}^2$.
	
	(ii). Now we aim to prove that the equality $\mathcal{U}(\phi)=\|\phi\|_{H^{\alpha-2\beta}(\mathbb{T}^2)}^2$ holds as long as \eqref{PDE-tilde-theta} admits a weak solution in $ \mathcal{A}_0. $ Assume that  $ \bar{\theta} $ is  the solution.
	Then 
	$	\tilde{\theta}(\cdot):=-\bar\theta(-\cdot)$
	satisfies $	\partial_t\tilde\theta(s)-\Lambda^{2\alpha} \tilde{\theta}(s)+R^{\perp}\tilde{\theta}(s)\cdot\nabla \tilde{\theta}(s) =0$ for all $ s\leq 0 $ and hence the first term on the right-hand side of  \eqref{S-infty-} is $0$. This completes the proof.
\end{proof}
\begin{remark}\label{rmk-A0}
	We provide further remarks on the choice of $\mathcal{A}_0$. It is evident that such a conditional equivalence does not rely on the concrete concept of 'weak-strong uniqueness' in the definition of $\mathcal{A}_0$. In other words, the conditional equivalence between the restricted quasi-potential and the Gaussian rate function depends only on the existence of solutions of \eqref{PDE-tilde-theta} within the restriction class of the quasi-potential. Therefore, the class $\mathcal{A}_0$ can be replaced by other regularity classes.
	
	Nevertheless, to demonstrate the relationships between the non-Gaussian large deviations (see Definition \ref{Asymptotic non-Gaussian equilibrium feature} below) in the equilibrium of the stochastic PDE \eqref{SPDE-SQG} and the uniqueness problem of the PDE \eqref{PDE-tilde-theta}, one needs to specify the restriction class of the quasi-potential to govern the equilibrium-large deviations rate. As indicated in \cite[Proposition 4.1]{CP22}, this restriction class depends on where the upper and lower bounds of the uniform dynamical large deviations match. To invoke the dynamical large deviations result in this paper, one expects to define $\mathcal{A}_0$ with trajectories lying in the $\mathcal{I}$-closure class $\mathcal{C}$, which is the domain where the upper and lower bounds match.
	
	However, the extension of the lower bound from the restriction class $\mathcal{C}_0$ to $\mathcal{C}$ technically relies on the choice of the random initial data. If  the random initial data was replaced by a deterministic initial data, it is unclear whether the extension of the lower bound holds on $\mathcal{C}$, as well as the uniform dynamical large deviations lower bound. Therefore, we specify that the trajectories of $\mathcal{A}_0$ lie only in the smaller set $\mathcal{C}_0$ instead of $\mathcal{C}$.			
\end{remark}

\subsection{Potential relationships to open problems for PDEs}

To demonstrate relationships to open problems for PDEs, we introduce the concept of non-Gaussian large deviations in equilibrium for stochastic PDE \eqref{SPDE-SQG}.
\begin{definition}[Non-Gaussian large deviations]\label{Asymptotic non-Gaussian equilibrium feature} 
	Let  $\{\nu_{\varepsilon}\}_{\varepsilon>0}$ be a family of  probability measures on $H_w^{\alpha-2\beta}(\mathbb{T}^2)$.	We say that  $\{\nu_{\varepsilon}\}_{\varepsilon>0}$ satisfies non-Gaussian  large deviations on $H_w^{\alpha-2\beta}(\mathbb{T}^2)$, if it satisfies large deviations  on $H_w^{\alpha-2\beta}(\mathbb{T}^2)$ with speed $ \varepsilon^{-1} $ and  rate function $ I_0(\cdot)\neq \|\cdot\|^2_{H^{\alpha-2\beta}(\mathbb{T}^2)}.$
\end{definition}

Let $\{\nu_{\varepsilon,\delta(\varepsilon)}\}_{\varepsilon>0}$ be a family of ergodic invariant measures of \eqref{SPDE-SQG}. Inspired by \cite{BC17} and \cite{CP22}, the large deviations for $\{\nu_{\varepsilon,\delta(\varepsilon)}\}_{\varepsilon>0}$ under a scaling regime $(\varepsilon,\delta(\varepsilon))\rightarrow(0,0)$ strongly rely on uniform dynamical large deviations, and its rate function is expected to be governed by the quasi-potential. Based on the discussion in Remark \ref{rmk-A0}, one can only expect to observe the rate function for the lower bound governed by the restricted quasi-potential \eqref{restrictquasipotential}. This connects to the open problem of uniqueness for the SQG equation \eqref{PDE-tilde-theta}.

As long as one can prove large deviations for $\{\nu_{\varepsilon,\delta(\varepsilon)}\}_{\varepsilon>0}$ with rate function \eqref{restrictquasipotential}, then Proposition \ref{quasi-Gaussian} indicates the following relationships. On the one hand, if a solution of \eqref{PDE-tilde-theta} exists in $\mathcal{A}_0$ (which implies uniqueness), then, by applying Proposition \ref{quasi-Gaussian}, one can observe that the expected rate function \eqref{restrictquasipotential} equals to the Gaussian rate function $\|\cdot\|^2_{H^{\alpha-2\beta}(\mathbb{T}^2)} $. Conversely, if non-Gaussian large deviations can be proven for $\{\nu_{\varepsilon,\delta(\varepsilon)}\}_{\varepsilon>0}$ in the sense of Definition \ref{Asymptotic non-Gaussian equilibrium feature}, the uniqueness problem of \eqref{PDE-tilde-theta} could be answered negatively. Therefore, Proposition \ref{quasi-Gaussian} suggests a new perspective for understanding the uniqueness problem of the PDE \eqref{PDE-tilde-theta} through probabilistic approaches. However, rigorously establishing this relationship requires proving large deviations of the invariant measure, which lies beyond the scope of this paper. 

\begin{remark}
	
	It is worth pointing out that, in \cite{Tot20}, the existence of Gaussian stationary solutions for \eqref{SPDE-SQG} with $\delta=0$ is proved using the energy solutions approach. However, this does not contradict the discussion of non-Gaussian large deviations for invariant measures in the sense of Definition \ref{Asymptotic non-Gaussian equilibrium feature}, as the aforementioned non-Gaussian large deviations focus on fluctuations in spaces with higher regularity due to the regularization of the noise and the choice of the scaling regime $(\varepsilon, \delta(\varepsilon))$.
	
	On the other hand, in \cite{HLZZ24}, the authors proved there exist infinitely many non-Gaussian ergodic stationary solutions of \eqref{SPDE-SQG} with $ \delta=0$ by showing the $ L^\infty(\Omega\times[0,\infty)) $-boundedness of the solutions with respect to some Besov norm. In their work, the parameters $ \alpha $ and $\beta$ satisfy $ \alpha\in[0,3/4) $ and $ \beta\in[0,1/8)$,  and the solutions lie in the Besov space $ B^{-1/2}_{p,1} $ for any $ p\geq 2. $ We note that the non-Gaussian large deviations defined above differs from the non-Gaussianity in \cite{HLZZ24}. Definition \ref{Asymptotic non-Gaussian equilibrium feature} only captures the asymptotic behavior of the measures by analyzing the rate function, demonstrating that it differs from the Gaussian measure $ \mathcal{G}\left(0, \varepsilon Q_{\delta(\varepsilon)} / 2\right) $ on $H^{\alpha-2\beta}(\mathbb{T}^2)$. However, it does not exclude the possibility that this family of measures are Gaussian distribution concentrated on a different space. We hope to bridge the discussion between non-Gaussian ergodic stationary solutions and the uniqueness problem of the PDE \eqref{PDE-tilde-theta} via the non-Gaussian large deviations.
	
\end{remark}
\

\appendix
\renewcommand{\appendixname}{Appendix~\Alph{section}}
\renewcommand{\theequation}{A.\arabic{equation}}
\section{Examples of the Weak-strong Uniqueness Classes}\label{section-example} 

In this section, we provide some examples of the weak-strong uniqueness classes $\mathcal{C}_0$ and  $ \mathcal{R}\subset\overline{\mathcal{C}_0}^{\mathcal{I}}$ for different choices of $\alpha$ and $\beta$ satisfying \eqref{pama-range}. These examples cover subcritical, critical, and supercritical cases.

\begin{Example}
	Suppose that $1/2<\alpha < 1$ and $ \beta=\alpha/2+1/4$.
	Let  $$\mathcal{C}_0^1:= \{\theta \in \mathbb{X}_{\alpha,\alpha/2+1/4}:\theta\in L^r\left([0,T] ; L^{p}(\mathbb{T}^2)\right), \, \frac{1}{p}+\frac{\alpha}{r}=\alpha-\frac{1}{2}, \, p\geq 1,\, r>0 \}.$$
	For every $g \in L^2([0, T];H)$, assume that $\theta_1 \in \mathcal{C}_0^1$ and  $\theta_2\in \mathbb{X}_{\alpha,\alpha/2+1/4}$ solve the skeleton equation \eqref{PDE-SQG} in the sense of Definition \ref{def-weak-solution-skeleton-equation} with the same initial data $\theta_1 (0)=\theta_2(0) \in H^{-1/2}(\mathbb{T}^2)$, and suppose further that $\theta_2$  satisfy the $H^{-1/2}$-energy inequality: for every $t \in[0, T]$,
	\begin{equation}\label{energy inequality for theta2}
		\frac{1}{2}\|\theta_2(t)\|_{H^{-1/2}(\mathbb{T}^2)}^2+\int_0^t\|\theta_2(s)\|_{H^{\alpha-1/2}(\mathbb{T}^2)}^2 \,\mathrm{d}s \leqslant \frac{1}{2}\left\|\theta_2(0)\right\|_{H^{-1/2}(\mathbb{T}^2)}^2+\int_0^t\langle\Lambda^{\alpha-1/2}\theta_2, g\rangle \,\mathrm{d}s.
	\end{equation}
	Then $\theta_1 =\theta_2$ in $ \mathbb{X}_{\alpha,\alpha/2+1/4}$.
	
\end{Example}
\begin{proof}
	The same conclusion holds in the case where $ \beta=\alpha/2$. We refer readers to \cite[Theorem 2.2, Remark 2.3]{CW99_Weak_strong} for the proof.
\end{proof}
In the next example, we will take $ \mathcal{R}^1:=  \mathbb{X}_{\alpha,\alpha/2+1/4} \cap L^4\left([0,T] ; L^{4}(\mathbb{T}^2)\right)$ and show that  $ \mathcal{R}^1$ is contained in the $ \mathcal{I}$-closure of $ \mathcal{C}_0^1. $ \begin{Example}\label{eg-C-subcritical}
	Let $\theta \in \mathcal{R}^1$ with $\mathcal{I}(\theta)<\infty$. Then there exists a sequence $\{\theta^{(n)} \}_{n\geq1}\subset \mathcal{C}_0^1$ such that $\theta^{(n)} \rightarrow \theta$ in  $\mathbb{X}_{\alpha,\alpha/2+1/4}$ and $\mathcal{I}\left(\theta^{(n)}\right) \rightarrow \mathcal{I}(\theta)$ as $n\rightarrow\infty$. 
\end{Example}
\begin{proof}
	Let $\{\eta_\epsilon\}_{\epsilon\in(0,1)}$ be the standard convolution kernel on $ \mathbb{T}^2.$ For any real-valued function $ f $ and every $ n\geq 1$ , we define $f^{(n)}$ by $f^{(n)}:=f\ast\eta_{1/n}$.  
	Now for $\theta^{(n)}:=\theta\ast\eta_{1/n}$, it suffices to prove
	
	(i) $ \theta^{(n)}\in \mathcal{C}_0^1 $,\quad
	(ii) $ \theta^{(n)}\rightarrow \theta $ in $ \mathbb{X}_{\alpha,\alpha/2+1/4} $ as $n\rightarrow \infty, $\quad
	(iii) $\mathcal{I}\left(\theta^{(n)}\right) \rightarrow \mathcal{I}(\theta)$.
	
	From the property of convolution kernel $\eta_\epsilon$, we deduce that (i) and (ii) hold. Now we proceed to prove (iii). By the lower semi-continuity of $\mathcal{I}$, it suffices to show that $\limsup_{n\rightarrow\infty}\mathcal{I}\left(\theta^{(n)}\right) \leq \mathcal{I}(\theta)$. 
	
	The fact that $ \theta^{(n)}(0)\rightarrow\theta(0) $ in $H^{-1/2}(\mathbb{T}^2)$ implies $\mathcal{I}_0\left(\theta^{(n)}(0)\right) \rightarrow \mathcal{I}_0(\theta(0))$. For the dynamic cost, thanks to Proposition \ref{prop-variational-characterization-rate-function}, there exists an element $g \in L^2([0, T];H)$ such that
	\begin{equation*}
		\partial_t \theta=-\Lambda^{2\alpha} \theta-(u_\theta \cdot \nabla \theta)+\Lambda^{\alpha+1/2} g
	\end{equation*}
	holds in the sense of \eqref{weak-solution-skeleton-equation}. 
	Taking the convolution with $\eta_{1/n}$, it follows that
	\begin{equation*}
		\partial_t \theta^{(n)}=-\Lambda^{2\alpha} \theta^{(n)}-(u_{\theta^{(n)}} \cdot \nabla \theta^{(n)})+\Lambda^{\alpha+1/2} g^{(n)}+\Lambda^{\alpha+1/2}R^{(n)},
	\end{equation*}
	where \begin{equation*}
		R^{(n)}:=\Lambda^{-\alpha-1/2}(u_{\theta^{(n)}} \cdot \nabla \theta^{(n)}-(u_\theta \cdot \nabla \theta)^{(n)})=\Lambda^{-\alpha-1/2}\nabla\cdot\left(u_{\theta^{(n)}}\theta^{(n)}-(u_\theta \theta)^{(n)} \right).
	\end{equation*}
	We will show that $\left\|R^{(n)}\right\|_{L^2([0, T]; H) } \rightarrow 0$ as $n\rightarrow \infty$. 
	
	Using  Cauchy-Schwarz inequality, Calder\'on-Zygmund inequality,  and the fact that $ \theta\in \mathcal{R}^1, $
	we have $u_\theta\theta\in L^2\left([0,T] ; L^{2}(\mathbb{T}^2;\mathbb{R}^2)\right)$. 
	Since $ \alpha>1/2 $ and $ (u_\theta\theta)^{(n)}\rightarrow u_\theta\theta $ in $L^2\left([0,T] ; L^{2}(\mathbb{T}^2;\mathbb{R}^2)\right) $, we find that 
	\begin{equation*}
		\begin{aligned}
			\left\| \Lambda^{-\alpha-1/2}\nabla\cdot\left(u_\theta\theta-(u_\theta\theta)^{(n)} \right)\right\|_{L^2\left([0,T] ; H\right)}&\lesssim\left\| u_\theta\theta-(u_\theta\theta)^{(n)} \right\|_{L^2\left([0,T] ; H^{1/2-\alpha}(\mathbb{T}^2;\mathbb{R}^2)\right)}\\& \lesssim\left\| u_\theta\theta-(u_\theta\theta)^{(n)} \right\|_{L^2\left([0,T] ; L^2(\mathbb{T}^2;\mathbb{R}^2)\right)} \rightarrow 0.
		\end{aligned}
	\end{equation*}
	Hence
	\begin{equation*}R^{1,n} := \Lambda^{-\alpha-1/2}\nabla\cdot\left(u_\theta\theta-(u_\theta\theta)^{(n)} \right) \rightarrow 0 \quad\text{in} \quad L^2\left([0,T] ; H\right).
	\end{equation*}
	Similarly, we see that $ u_{\theta^{(n)}}\theta^{(n)}-u_\theta\theta=(u_{\theta^{(n)}}-u_\theta)\theta^{(n)}+u_\theta(\theta^{(n)}-\theta)\rightarrow 0 $ in $ L^2\left([0,T] ; L^{2}(\mathbb{T}^2;\mathbb{R}^2)\right) $. Thus $$ R^{2,n} :=\Lambda^{-\alpha-1/2}\nabla\cdot\left( u_{\theta^{(n)}}\theta^{(n)}-u_\theta\theta \right) \rightarrow 0 \quad\text{in} \quad L^2\left([0,T] ; H\right) .$$
	Therefore,   $\left\|R^{(n)}\right\|_{L^2([0, T];H)} =\left\|R^{1,n}+R^{2,n}\right\|_{L^2([0, T];H)} \rightarrow 0$ as $n\rightarrow \infty$.  Since $\theta^{(n)}$ is a weak solution of \eqref{PDE-SQG} with control $g^{(n)}+R^{(n)}$, we conclude that
	\begin{equation*}
		\mathcal{I}_{dyna}\left(\theta^{(n)}\right) \leq\left\|g^{(n)}+R^{(n)}\right\|_{L^2\left([0, T];H\right) }^2 \rightarrow\|g\|_{L^2\left([0, T];H\right) }^2=\mathcal{I}_{dyna}(\theta).
	\end{equation*}
\end{proof}
\begin{remark}
	The fact that $ \frac{1}{4}+\frac{\alpha}{4} \geqslant\alpha- \frac{1}{2}$ implies that the space $ \mathcal{R}^1 $ is larger than $ \mathcal{C}_0^1 $ when taking $ r=4 $ in the definition of  $ \mathcal{C}_0^1 $.
\end{remark}

\begin{Example}
	Suppose that  $0<\alpha \leqslant 1/2$ and $ \beta=\alpha/2$.
	Let  $$\mathcal{C}_0^2:= \{\theta \in \mathbb{X}_{\alpha,\alpha/2}: \nabla \theta\in L^r\left([0,T] ; L^{p}(\mathbb{T}^2;\mathbb{R}^2)\right), \, \frac{1}{p}+\frac{\alpha}{r}=\alpha, \, p>	\frac{1}{\alpha}, \, r>0\}.$$
	For every $g \in L^2([0, T];H)$, assume that $\theta_1 \in \mathcal{C}_0^2$ and  $\theta_2\in \mathbb{X}_{\alpha,\alpha/2}$ solve the skeleton equation \eqref{PDE-SQG} in the sense of Definition \ref{def-weak-solution-skeleton-equation} with the same initial data $\theta_1 (0)=\theta_2(0) \in H$, and suppose further that $\theta_2$  satisfy the kinetic energy inequality: 	for every $t \in[0, T]$,
	\begin{equation}\label{energy-inequality-a3}
		\frac{1}{2}\|\theta_2(t)\|_H^2+\int_0^t\|\theta_2(s)\|_{H^{\alpha}(\mathbb{T}^2)}^2 \,\mathrm{d}s \leqslant \frac{1}{2}\left\|\theta_2(0)\right\|_H^2+\int_0^t\langle\Lambda^{\alpha}\theta_2, g\rangle \,\mathrm{d}s.
	\end{equation}
	Then $\theta_1=\theta_2$ in $ \mathbb{X}_{\alpha,\alpha/2}$.
	
\end{Example}

\begin{proof}  This is a direct consequence of	\cite[Theorem 1.1]{DC06_weak_strong}. 
\end{proof}

In the next example, we will take $ \mathcal{R}^2:=  \mathbb{X}_{\alpha,\alpha/2} \cap L^4\left([0,T] ; H^{1-\alpha,4}(\mathbb{T}^2)\right)$ and show that  $ \mathcal{R}^2$ is contained in the $ \mathcal{I}$-closure of $ \mathcal{C}_0^2. $ Here $H^{1-\alpha,4}(\mathbb{T}^2) $ is the homogeneous Sobolev space defined by
\begin{equation*}
	H^{s,p}(\mathbb{T}^2):=\big\{f\in L^{p}(\mathbb{T}^2): \text { there exists some }  g \in  L^{p}(\mathbb{T}^2) , f=\Lambda^{-s} g\big\} \quad s\geq0,\quad p\geq 1,
\end{equation*} with the norm $ \|f\|_{H^{s,p}(\mathbb{T}^2)}:=\|\Lambda^{s}f\|_{L^{p}(\mathbb{T}^2)}$.
We need the following product-type estimate to deal with the nonlinear term.
\begin{lemma}\label{lemma-product-estimate}\cite[Lemma A.4]{Res95}
	Suppose that $s>0$ and $p \in(1, \infty)$. For any $f, g \in C^{\infty}\left(\mathbb{T}^2\right)$, 
	\begin{equation*}
		\left\|\Lambda^s(f g)\right\|_{L^p(\mathbb{T}^2)} \leq C\left(\|f\|_{L^{p_1}(\mathbb{T}^2)}\left\|\Lambda^s g\right\|_{L^{p_2}(\mathbb{T}^2)}+\|g\|_{L^{p_1}(\mathbb{T}^2)}\left\|\Lambda^s f\right\|_{L^{p_2}(\mathbb{T}^2)}\right),
	\end{equation*}
	where $p_1, p_2\in(1, \infty)$ satisfy		$	\frac{1}{p}=\frac{1}{p_1}+\frac{1}{p_2}.$
\end{lemma}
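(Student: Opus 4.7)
The strategy is to prove this classical Kato-Ponce (fractional Leibniz) inequality via Bony's paraproduct decomposition from Littlewood-Paley theory, using the operators $\Delta_j$, $S_j$, $H_j$ that the paper has already introduced. First I would write
\begin{equation*}
fg = T_f g + T_g f + R(f,g),
\end{equation*}
where $T_f g := \sum_{j}S_{j-2}f\,\Delta_j g$ and $T_g f := \sum_j S_{j-2}g\,\Delta_j f$ are the two paraproducts, and $R(f,g):=\sum_{|j-k|\le 2}\Delta_j f\,\Delta_k g$ is the resonant remainder. The crucial spectral-support properties are: $S_{j-2}f\,\Delta_j g$ has Fourier support in an annulus $|\xi|\sim 2^j$, whereas $\Delta_j f\,\Delta_{j'} g$ with $|j-j'|\leq 2$ has Fourier support only in a ball of radius $\sim 2^{\max(j,j')}$.

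For the paraproduct $T_f g$, the annular frequency localization lets me treat $\Lambda^s$ on each summand as a multiplier of order $2^{js}$. Combining this with Bernstein's inequality, H\"older's inequality with exponents $p=p_1^{-1}+p_2^{-1}$, and the uniform boundedness of $S_{j-2}$ on $L^{p_1}$ gives
\begin{equation*}
\|\Lambda^s(S_{j-2}f\,\Delta_j g)\|_{L^p(\mathbb{T}^2)} \lesssim \|f\|_{L^{p_1}(\mathbb{T}^2)}\,2^{js}\|\Delta_j g\|_{L^{p_2}(\mathbb{T}^2)}.
\end{equation*}
The Littlewood-Paley square-function characterization of $L^p$ (valid for $p\in(1,\infty)$) then gives $\|\Lambda^s T_f g\|_{L^p}\lesssim \|f\|_{L^{p_1}}\|\Lambda^s g\|_{L^{p_2}}$, and the symmetric argument produces $\|\Lambda^s T_g f\|_{L^p}\lesssim \|g\|_{L^{p_1}}\|\Lambda^s f\|_{L^{p_2}}$.

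The main obstacle is the resonant remainder $R(f,g)$, since the high-high interaction $\Delta_j f\,\Delta_{j'}g$ with $|j-j'|\leq 2$ no longer has annular frequency support, so the clean multiplier estimate fails. The plan is to use Bernstein's inequality on a ball to still gain the factor $2^{js}$, then use H\"older's inequality to put $\Lambda^s$ on whichever factor is convenient, yielding
\begin{equation*}
\|\Lambda^s R(f,g)\|_{L^p(\mathbb{T}^2)}\lesssim \sum_{|j-j'|\leq 2} 2^{js}\|\Delta_j f\|_{L^{p_2}(\mathbb{T}^2)}\|\Delta_{j'}g\|_{L^{p_1}(\mathbb{T}^2)}.
\end{equation*}
Applying Cauchy-Schwarz in $j$ together with the Fefferman-Stein vector-valued maximal inequality (which requires $s>0$ to ensure summability and the condition $p_1,p_2\in(1,\infty)$ to invoke the maximal function on $L^{p_1}$) absorbs the geometric sum and yields the bound $\|g\|_{L^{p_1}}\|\Lambda^s f\|_{L^{p_2}}$; the symmetric choice gives $\|f\|_{L^{p_1}}\|\Lambda^s g\|_{L^{p_2}}$. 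Adding the three contributions proves the claimed inequality. Since the statement is directly cited to Resnick's thesis, an alternative route is simply to quote that reference; I include the paraproduct sketch to indicate where the hypotheses $s>0$ and $p,p_1,p_2\in(1,\infty)$ enter.
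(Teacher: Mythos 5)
The paper does not prove this lemma at all: it is quoted verbatim as \cite[Lemma A.4]{Res95} (the classical Kato--Ponce/Kenig--Ponce--Vega fractional Leibniz rule), and the citation is the entire argument. Your paraproduct sketch is therefore a genuinely different route --- an actual proof rather than a reference --- and it is the standard modern one; the overall architecture (Bony decomposition, annular support of $S_{j-2}f\,\Delta_j g$, ball support of the resonant block, $s>0$ entering through the Bernstein-type gain $\|\Lambda^s u\|_{L^p}\lesssim 2^{js}\|u\|_{L^p}$ for $u$ spectrally supported in $\{|\xi|\lesssim 2^j\}$, and $p_1,p_2\in(1,\infty)$ entering through square-function/maximal-function bounds) is correct and correctly locates where each hypothesis is used. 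Two points are stated slightly loosely. First, for the paraproduct term the "uniform boundedness of $S_{j-2}$ on $L^{p_1}$" is not quite the tool you need, because $S_{j-2}f$ sits inside the $j$-sum of the square function; the standard fix is the pointwise domination $|S_{j-2}f|\lesssim Mf$ followed by H\"older and the square-function characterization of $\|\Lambda^s g\|_{L^{p_2}}$, which is where $p_1\in(1,\infty)$ (for $M$) and $p_2\in(1,\infty)$ (for the square function) are really used. Second, for the remainder the key step after Bernstein is not Cauchy--Schwarz plus Fefferman--Stein but the re-expansion $\Delta_k R(f,g)=\sum_{j\geq k-C}\Delta_k(\Delta_j f\,\Delta_{j'}g)$ together with the discrete Young convolution $\sum_{j\geq k-C}2^{(k-j)s}(\cdots)$, whose summability is exactly the role of $s>0$. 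These are routine repairs, and on $\mathbb{T}^2$ with mean-zero functions the multiplier $|\xi|^s$ causes no issue at the origin, so the sketch is sound. What your route buys is a self-contained proof exhibiting the role of each hypothesis; what the paper's route buys is brevity, since the result is classical and only used as a black box in Appendix A.
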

\begin{Example}
	Let $\theta \in \mathcal{R}^2$ with $\mathcal{I}(\theta)<\infty$. Then there exists a sequence $\{\theta^{(n)} \}_{n\geq1}\subset \mathcal{C}_0^2$ such that $\theta^{(n)} \rightarrow \theta$ in  $\mathbb{X}_{\alpha,\alpha/2}$ and $\mathcal{I}\left(\theta^{(n)}\right) \rightarrow \mathcal{I}(\theta)$ as $n\rightarrow\infty$. 
\end{Example}

\begin{proof}
	As in the proof of Example \ref{eg-C-subcritical}, it is sufficient to  show  $\left\|R^{(n)}\right\|_{L^2([0, T]; H) } \rightarrow 0$ as $n\rightarrow \infty$,	where \begin{equation*}
		R^{(n)}:=\Lambda^{-\alpha}(u_{\theta^{(n)}} \cdot \nabla \theta^{(n)}-(u_\theta \cdot \nabla \theta)^{(n)})=\Lambda^{-\alpha}\nabla\cdot\left(u_{\theta^{(n)}}\theta^{(n)}-(u_\theta \theta)^{(n)} \right).
	\end{equation*}	
	Since both Riesz transform and $ \Lambda^{1-\alpha}$  are defined by Fourier multipliers, it follows from Calder\'on-Zygmund inequality that,  for all $ t\in [0,T], \|u_\theta(t)\|_{H^{1-\alpha,4}(\mathbb{T}^2;\mathbb{R}^2)}\leq C\|\theta(t)\|_{H^{1-\alpha,4}(\mathbb{T}^2)}.$ 
	Using  Lemma \ref{lemma-product-estimate} with $ p_1=p_2=4$, Cauchy-Schwarz inequality, and Calder\'on-Zygmund inequality,  
	we have
	\begin{equation*}\|u_\theta\theta\|_ {L^2\left([0,T] ; H^{1-\alpha}(\mathbb{T}^2;\mathbb{R}^2)\right) }^2\lesssim \int_0^T\|\theta(t)\|_{H^{1-\alpha,4}(\mathbb{T}^2)}^4\,\mathrm{d}t.
	\end{equation*}
	As $ (u_\theta\theta)^{(n)}\rightarrow u_\theta\theta $ in $L^2\left([0,T] ; H^{1-\alpha}(\mathbb{T}^2;\mathbb{R}^2)\right) $, we find that $$ R^{1,n} := \Lambda^{-\alpha}\nabla\cdot\left(u_\theta\theta-(u_\theta\theta)^{(n)} \right) \rightarrow 0 \quad\text{in} \quad L^2\left([0,T] ; H\right) .$$
	Applying Lemma \ref{lemma-product-estimate}  again, we see that $u_{\theta^{(n)}} \theta^{(n)}-u_\theta\theta=(u_{\theta^{(n)}} -u_\theta)\theta^{(n)}+u_\theta(\theta^{(n)}-\theta)\rightarrow 0 $ in $ L^2\left([0,T] ; H^{1-\alpha}(\mathbb{T}^2;\mathbb{R}^2)\right) $. Thus $ R^{2,n} :=\Lambda^{-\alpha}\nabla\cdot\left(u_{\theta^{(n)}} \theta^{(n)}-u_\theta\theta \right) \rightarrow 0 $ in $ L^2\left([0,T] ; H\right) .$
	Then it follows from the proof of Example \ref{eg-C-subcritical} that $	\mathcal{I}_{dyna}\left(\theta^{(n)}\right)\rightarrow \mathcal{I}_{dyna}(\theta)$.
\end{proof}
\begin{remark}
	The embedding of    $ H^{1,\frac{1}{1/4+\alpha/2}}(\mathbb{T}^2)$ into $ H^{1-\alpha,4}(\mathbb{T}^2)$ and the fact that $ \frac{1}{4}+\frac{\alpha}{2}+\frac{\alpha}{4} \geqslant\alpha$ imply that the space $\mathcal{R}^2$ is larger than $ \mathcal{C}_0^2 $ when taking $ r=4 $ in the definition of  $ \mathcal{C}_0^2$.
\end{remark}

To illustrate the next example in the critical case $ \alpha=1/2$, we recall that the BMO space on the $ d $-dimensional torus is defined as
\begin{equation*}
	BMO(\mathbb{T}^d):=\{f\in L^1(\mathbb{T}^d):\sup _{\{B \subset \mathbb{T}^d: B\text { is a ball}\}}  \frac{1}{|B|} \int_B\left|f-m_Bf\right| \,\mathrm{d} x<\infty\},
\end{equation*}
where $m_B f=\int_B f(x) \,\mathrm{d}x$. $BMO(\mathbb{T}^d)$ endowed with the norm 	\begin{equation*}
	\|f\|_{BMO(\mathbb{T}^d)}=\sup _{\{B \subset \mathbb{T}^d: B\text { is a ball}\}} \frac{1}{|B|} \int_B\left|f-m_Bf\right| \,\mathrm{d} x+\left|\int_{\mathbb{T}^d}f\,\mathrm{d} x\right| 
\end{equation*}
is a Banach space. We recall that a function $ a $ on $ \mathbb{T}^d $ is called an atom if $ a $ is supported on a ball $ B$, $ \int_Ba(x)\,\mathrm{d} x=0$ and $ \|a\|_{L^\infty(\mathbb{T}^d)}\leq \frac{1}{|B|}. $
The Hardy space $\mathcal{H}^1\left(\mathbb{T}^d\right)$ is defined as	\begin{equation*}
		\mathcal{H}^1(\mathbb{T}^d):=\big\{f\in L^1(\mathbb{T}^d): f \text { can be written as } f=\sum_{j \in \mathbb{N}} \lambda_j a_j: \lambda_j\in\mathbb{R},\, a_j \text{ are atoms }, \sum_{j \in \mathbb{N}} |\lambda_j|<\infty\big\}.
\end{equation*}
The norm of $\mathcal{H}^1\left(\mathbb{T}^d\right)$ is given by $\|f\|_{\mathcal{H}^1(\mathbb{T}^d)}=\inf \left\{\sum_{j \in \mathbb{N}}\left|\lambda_j\right|: f=\sum_{j \in \mathbb{N}} \lambda_j a_j\right\}$, where the infinimum is taken over all the atomic decompositions of $f$. 
\begin{lemma}[\cite{CW77}]\label{lemma-dual-H1-BMO}
	$\mathcal{H}^1(\mathbb{T}^d)$ is a Banach space and the dual space of  $\mathcal{H}^1(\mathbb{T}^d)$ is $BMO(\mathbb{T}^d)$.
\end{lemma}
\begin{lemma}\label{lemma-H-1} For any $ f\in L^2{(\mathbb{T}^2)},$ there exists a constant $ C>0 $ such that
	\begin{equation*} 
		\left\|f R_i f\right\|_{\mathcal{H}^1(\mathbb{T}^2)} \leq C\|f\|_{L^2(\mathbb{T}^2)}^2, \quad i=1,2,
	\end{equation*}
	where $ R_i f $ is the i-th Riesz transform of $ f. $
\end{lemma}
For more details of Lemma \ref{lemma-H-1}, we refer to \cite[Section 3]{CC04} and the references therein.
\begin{Example}
	Suppose that $\alpha=1/2$ and $ \beta=\alpha/2+1/4=1/2$. There exists a positive number $ D_\infty $ such that  $\mathcal{C}_0^3:= \{\theta \in \mathbb{X}_{1/2,1/2}:\theta\in L^2\left([0,T] ; L^p(\mathbb{T}^2)\right), \|R^\perp\theta\|_{L^\infty([0,T] ; BMO(\mathbb{T}^2))}\leq D_\infty, p>2\}$ is a weak-strong uniqueness class. More precisely,
	for every $g \in L^2([0, T];H)$, assume that $\theta_1 \in \mathcal{C}_0^3$ and  $\theta_2\in \mathbb{X}_{1/2,1/2}$ solve the skeleton equation \eqref{PDE-SQG} in the sense of Definition \ref{def-weak-solution-skeleton-equation} with the same initial data $\theta_1 (0)=\theta_2(0) \in H^{-1/2}(\mathbb{T}^2)$, and suppose further that $\theta_2$  satisfy the $H^{-1/2}$-energy inequality: 	for every $t \in[0, T]$,
	\begin{equation}\label{energy inequality for theta2-3}
		\frac{1}{2}\|\theta_2(t)\|_{H^{-1/2}(\mathbb{T}^2)}^2+\int_0^t\|\theta_2(s)\|_{H}^2 \,\mathrm{d}s \leqslant \frac{1}{2}\left\|\theta_2(0)\right\|_{H^{-1/2}(\mathbb{T}^2)}^2+\int_0^t\langle\theta_2,g\rangle \,\mathrm{d}s.
	\end{equation}
	Then $\theta_1=\theta_2$ in $\mathbb{X}_{1/2,1/2}$.
	
\end{Example}
\begin{proof}
	The same conclusion holds in $ \mathbb{R}^2 $ case, which was proved in \cite[Theorem 1.3]{Mar08_Weak_strong_critical}. 
	For every $t \in[0, T]$, we expand  $\|\theta_1(t)-\theta_2(t)\|_{{H^{-1/2}(\mathbb{T}^2)}}^2$ as \begin{equation}\label{expansion-3}
		\|\theta_1(t)-\theta_2(t)\|_{{H^{-1/2}(\mathbb{T}^2)}}^2=-2\langle\Lambda^{-1/2}\theta_1(t),\Lambda^{-1/2}\theta_2(t)\rangle +\|\theta_1(t)\|_{{H^{-1/2}(\mathbb{T}^2)}}^2+\|\theta_2(t)\|_{{H^{-1/2}(\mathbb{T}^2)}}^2
	\end{equation} and estimate these three terms respectively. For the first term of the right-hand side of \eqref{expansion-3}, we need the following equality:
	\begin{equation}\label{auxiliary-3}
		\begin{aligned}
			&\quad\int_0^t\left(2 \langle\theta_1, \theta_2\rangle+ \langle \theta_2u_{\theta_2}, \nabla\Lambda^{-1}\theta_1 \rangle+ \langle \theta_1u_{\theta_1}, \nabla\Lambda^{-1}\theta_2 \rangle\right) \,\mathrm{d} s 
			\\&=-\langle\Lambda^{-1/2} \theta_2(t), \Lambda^{-1/2}\theta_1(t)\rangle +\langle \Lambda^{-1/2}\theta_2(0), \Lambda^{-1/2}\theta_1(0)\rangle +\int_0^t\langle\theta_2+\theta_1, g\rangle \,\mathrm{d}s.
		\end{aligned}
	\end{equation}
	This can be proved by taking $\Lambda^{-1}\theta^{(n)}_i (i=1,2)$ as test functions and passing to the limits.	We focus on the convergence of the nonlinear terms. 
	Since $ \theta_1\in\mathcal{C}_0^3$, $\nabla\Lambda^{-1}  \theta_1 =R\theta_1\in L^\infty([0,T];BMO(\mathbb{T}^2))$. Moreover, Lemma \ref{lemma-H-1} implies that $\theta_2u_{\theta_2}\in L^1([0,T];\mathcal{H}^1(\mathbb{T}^2))$.  By the the equivalent characterization of the $ \mathcal{H}^1$-norm (\cite[Theorem 6.4]{Duo01}) and the property of the  convolution kernel $ \eta_{1/n} $,
	\begin{equation*}
		\left| \int_0^t\langle \theta_2u_{\theta_2}, \nabla\Lambda^{-1}(\theta_1-\theta^{(n)}_1)\rangle\,\mathrm{d}s\right|= \left| \int_0^t\langle \theta_2u_{\theta_2}-(\theta_2u_{\theta_2})^{(n)}, \nabla\Lambda^{-1}\theta_1\rangle\,\mathrm{d}s\right| \rightarrow 0.
	\end{equation*} The continuous embedding $BMO(\mathbb{T}^2)\subset L^{\frac{2p}{p-2}}(\mathbb{T}^2)$  (\cite[Corollary 6.12]{Duo01}) implies that $ \theta_1u_{\theta_1}\in  L^2([0,T];L^2(\mathbb{T}^2)).$
	Since  $\theta^{(n)}_2\rightarrow\theta_2$	in $ L^2([0,T];L^2(\mathbb{T}^2)), $ it follows that \begin{equation*}
		\bigg|\int_{0}^{t} \langle \theta_1u_{\theta_1}, \nabla\Lambda^{-1}(\theta_2-\theta^{(n)}_2)\rangle\,\mathrm{d}s\bigg|\rightarrow 0.
	\end{equation*} 
	This completes the proof of \eqref{auxiliary-3}.
	Since \eqref{auxiliary-3} implies that $ \theta_1 $ is sufficiently regular to be taken as a test function, for any $ t\in[0,T],$
	\begin{equation}\label{energy equality for theta1-3}
		\frac{1}{2}\|\theta_1(t)\|_{H^{-1/2}(\mathbb{T}^2)}^2+ \int_0^t\left\|\theta_1(s)\right\|_{H}^2\,\mathrm{d}s =	\frac{1}{2}\|\theta_1(0)\|_{H^{-1/2}(\mathbb{T}^2)}^2+ \int_0^t\langle\theta_1, g\rangle  \,\mathrm{d}s .
	\end{equation}
	According to \eqref{expansion-3}, \eqref{auxiliary-3},  \eqref{energy equality for theta1-3}, and \eqref{energy inequality for theta2-3}, using Lemma \ref{lemma-dual-H1-BMO} and Lemma \ref{lemma-H-1}, we obtain
	\begin{equation*}
		\begin{aligned}
			&\|\theta_1(t)-\theta_2(t)\|_{{H^{-1/2}(\mathbb{T}^2)}}^2+ 2\int_0^t\left\|\theta_1-\theta_2\right\|_{H}^2 \,\mathrm{d} s \\\leqslant&2 \int_0^t \langle \theta_2u_{\theta_2}, \nabla\Lambda^{-1}\theta_1 \rangle+ \langle \theta_1u_{\theta_1}, \nabla\Lambda^{-1}\theta_2 \rangle \,\mathrm{d}s\\=&-2\int_0^t\langle(\theta_2-\theta_1)u_{\theta_1}, \nabla\Lambda^{-1}(\theta_2-\theta_1) \rangle \,\mathrm{d}s\\\leqslant&2\|(\theta_2-\theta_1)R(\theta_2-\theta_1)\|_{L^1([0,T];\mathcal{H}^1(\mathbb{T}^2))}\| u_{\theta_1}\|_{L^\infty([0,T];BMO(\mathbb{T}^2))}  \\\leqslant& C\|\theta_2-\theta_1\|^2_{L^2([0,T];H)}\| R^\perp \theta_1\|_{L^\infty([0,T];BMO(\mathbb{T}^2))} \\\leqslant& CD_\infty\|\theta_2-\theta_1\|^2_{L^2([0,T];H)}.
		\end{aligned}
	\end{equation*}
	Hence we can choose a sufficiently small $ D_\infty $ to deduce that $ \theta_1-\theta_2 = 0. $
\end{proof}
	
	\section*{Acknowledgements}
Lin Wang acknowledges the support by National Key R\&D Program of China (No.2020YFA0712700), National Natural Science Foundation of China (No. 12090010, 12090014, 12471138), and Key Laboratory of Random Complex Structures and Data Science, Academy of Mathematics and Systems Science, Chinese Academy of Sciences (No. 2008DP173182). 
Zhengyan Wu acknowledges the support by the Deutsche Forschungsgemeinschaft (DFG, German Research Foundation) via IRTG 2235 - Project Number 282638148.
The authors are grateful to the anonymous reviewers for their valuable suggestions and insightful feedback that have improved the clarity and quality of this work. The authors also would like to thank  Prof. Zhao Dong for giving the consistent support and encouragement. 
		\bibliographystyle{alpha}
		\bibliography{LDPSQG}
	\end{document}